\definecolor{rouge}{rgb}{0.85,0.1,.4}
\definecolor{bleu}{rgb}{0.1,0.2,0.9}
\definecolor{violet}{rgb}{0.7,0,0.8}
\newcommand{\bra}{{\langle}}
\newcommand{\ket}{{\rangle}}
\newcommand{\cprime}{$'$}
\newcommand{\on}{\operatorname}
\newcommand{\+}{\mathop{\oplus}}
\renewcommand{\*}{{\otimes}}
\newcommand{\mc}{\mathcal}
\newcommand{\mf}{\mathfrak}
\newcommand{\fing}{\mf{g}}
\newcommand{\affg}{\widehat{\mf{g}}}
\newcommand{\Z}{\mathbb{Z}}
\newcommand{\C}{\mathbb{C}}
\newcommand{\N}{\mathbb{N}}
\newcommand{\Q}{\mathbb{Q}}
\newcommand{\W}{\mathcal{W}}  
\newcommand{\ra}{\rightarrow}
\newcommand{\lam}{\lambda}
\newcommand{\bs}{\boldsymbol}
\def\g{\mathfrak{g}}
\def\l{\mathfrak{l}}
\def\h{\mathfrak{h}}
\def\n{\mathfrak{n}}
\def\O{\mathbb{O}}
\def\SS{\mathbb{S}}
\def\Slo{\mathscr{S}}
\def\P{\mathscr{P}}
\def\sl{\mathfrak{sl}}
\def\so{\mathfrak{so}}
\def\sp{\mathfrak{sp}}
\def\eps{\varepsilon}
\def\bs{\boldsymbol}
\def\P{\mathscr{P}}
\def\leq{\leqslant}
\def\geq{\geqslant}
\DeclareMathOperator{\End}{End}
\DeclareMathOperator{\gr}{gr}
\DeclareMathOperator{\ad}{ad}
\DeclareMathOperator{\Sing}{Sing}
\theoremstyle{theorem}
\newtheorem{Th}{Theorem}[section]
\newtheorem{Pro}[Th]{Proposition}
\newtheorem{Lem}[Th]{Lemma}
\newtheorem{lemma}[Th]{Lemma}
\newtheorem{Conj}{Conjecture}
\theoremstyle{remark}
\newtheorem{Def}[Th]{Definition}
\newtheorem{Rem}[Th]{Remark}
\title{On the irreducibility of  associated varieties of W-algebras}
\subjclass[2010]{81R10, 17B08, 14L30}
\keywords{Associated variety of vertex algebras, W-algebras, 
branching, nilpotent Slodowy slice.}
\author{Tomoyuki Arakawa}
\address{Department of Mathematics, MIT, 77 Massachusetts Ave. Cambridge MA 02139 USA}
\address{Research Institute for Mathematical Sciences, Kyoto University,
 Kyoto 606-8502 JAPAN}
\email{arakawa@kurims.kyoto-u.ac.jp}
\author{Anne Moreau}
\address{Laboratoire de Math\'{e}matiques et Applications, T\'{e}l\'{e}port 2 - BP 30179, 
Boulevard Marie et Pierre Curie, 86962 Futuroscope Chasseneuil Cedex, France}
\email{anne.moreau@math.univ-poitiers.fr}
\begin{document}

 \begin{abstract}
 We investigate the irreducibility of the nilpotent Slodowy slices that appear as the associated variety of 
 $W$-algebras.
 Furthermore, we provide new examples of vertex algebras whose associated variety has finitely many symplectic leaves.
 \end{abstract}

\maketitle

\begin{center}
{\em Dedicated to the 60th birthday of Professor Efim Zelmanov}
\end{center}

\bigskip 

\section{Introduction}
It is known \cite{Li05} that  every vertex algebra $V$ is canonically filtered 
and therefore it can be considered as a quantization of  its associated graded Poisson vertex algebra
$\gr V$.
The {\em Zhu's $C_2$-algebra} 
 $R_V$ 
of $V$ \cite{Zhu96}
is
a generating subalgebra of the differential algebra $\gr V$ 
and has the structure of a Poisson algebra.
Its maximal spectrum
\begin{align*}
{X}_V:={\rm Specm}\, R_V 
\end{align*}
is called 
the  {\em associated variety} 
of $V$ (\cite{Ara12}).
The associated variety ${X}_V$ is a fundamental invariant of $V$ that captures some important properties
of the vertex algebra $V$ itself (see e.g.\ \cite{BeiFeiMaz,Zhu96,AbeBuhDon04,Miy04,Ara12,Ara09b,A2012Dec,AM15,AM16}.

As a Poisson variety, the associated variety of a vertex algebra 
is a finite disjoint union of smooth analytic Poisson manifolds, 
and it is stratified by its symplectic leaves. 
The case where the associated variety has finitely many symplectic leaves 
is particularly interesting.
This happens for instance when  
$V$ is an admissible affine vertex algebra\footnote{that is,
the simple 
affine vertex algebras associated with admissible representations of affine Kac-Moody algebras.} (\cite{Ara09b}),
when $V$ is the simple affine vertex algebra associated with a simple 
Lie algebra that 
belongs to the
the Deligne exceptional series 
 \cite{De96} at level $k=-h^{\vee}/6-1$ (\cite{AM15}) 
with $h^\vee$ the dual Coxeter number, 
or when $V$ is the (generalized) Drinfeld-Sokolov reduction 
(\cite{FF90,KacRoaWak03}) of the latter affine vertex algebras
 provided that  it is nonzero  (\cite{Ara09b}).
This is also expected to happen for the vertex algebras 
obtained from four dimensional $N=2$ superconformal field theories (\cite{BeeLemLie15}),
where the associated variety is expected to coincide with the spectrum 
of the chiral ring of the 
Higgs branch of the four dimensional theory.
Of course, it also happens when the associated variety of $V$ is a point,
that is, when $V$ is {\em lisse} (or $C_2$-cofinite).
 
In our previous paper \cite{AM16}
we conjectured that, under reasonable assumptions on the 
vertex algebra $V$, the associated variety $X_V$ is irreducible
if it has only finitely many symplectic leaves (see Conjecture \ref{Conj:equidim}). 

One of the aims of this paper is 
to verify  this irreducibility conjecture for the known cases 
where $X_V$ has finitely many symplectic leaves.
It turns out this is a subtle problem for 
the associated varieties of (conjecturally simple) $W$-algebras,
and some deep results on the geometry of nilpotent orbits 
are needed 
for the verifications.
More precisely, the associated variety in question has the form
\begin{align*}
\Slo_{\O,f}:=\overline{\O} \cap \Slo_f,
\end{align*}
where $\O$ is a nilpotent orbit in a simple Lie algebra $\fing$ 
and $\Slo_f $ is {\em the} Slodowy slice at a nilpotent
element $f$ of $\fing$. 
The variety $\Slo_{\O,f}$ is called a {\em nilpotent Slodowy slice} (\cite{FJLS15}). 
It is not irreducible in general.
On the other hand, not every nilpotent Slodowy slice appears 
as the associated variety of some $W$-algebra.
We investigate in detail the irreducibility of the variety $\Slo_{\O,f}$ 
that appears as the associated variety of a $W$-algebra 
and confirm the irreducibility  for all the known cases.

Another aim of this paper is to provide 
new examples of vertex algebras
whose associated variety has finitely many symplectic leaves.
We do this by proving a conjecture stated in our previous article 
\cite[Conjecture 2]{AM16} 
(Theorem~\ref{Th:AMconj}) and also by showing that 
the associated variety of 
the simple affine vertex algebra associated with $\mf{so}_{2r+1}$
at level $-2$ is the short nilpotent orbit closure (Theorem~\ref{Th:short}). 
Theorems~\ref{Th:AMconj} and \ref{Th:short} 
give new examples of non-admissible affine vertex algebras 
whose associated variety is contained in the nilpotent cone.  

\subsection*{Acknowledgments} 
We thank very much Daniel Juteau for
his explanations about branchings and 
nilpotent Slodowy slices. 
We thank Drazen Adamovi\'{c} for inviting us
to the University 
of Zagreb in June 2016, and the CIRM Luminy
for its hospitality during our stay 
as ^^ ^^ Research in pairs" in August, 2016.

The first named author is supported 
 by JSPS KAKENHI Grant Numbers 25287004 and 26610006.
The second named author 
is supported by the ANR Project GeoLie Grant number ANR-15-CE40-0012. 

\section{Known examples of vertex algebras whose associated variety 
has finitely many symplectic leaves}
Let $V$ be a vertex algebra,
and let 
$$V\ra (\End V)[[z,z^{-1}]],\quad a\mapsto a(z)=\sum_{n\in \Z}a_{(n)}z^{-n-1}, 
$$be the state-field correspondence.
The Zhu's $C_2$-algebra is by definition the quotient space 
$R_V=V/C_2(V)$,
where $C_2(V)=\on{span}_{\C}\{a_{(-2)}b\mid a,b\in V\}$,
equipped with the Poisson algebra structure given by
$$\bar a. \bar b=\overline{a_{(-1)}b},\quad \{\bar a,\bar b\}=\overline{a_{(0)}b},$$
for $a,b \in V$ with $\bar a := a+C_2(V)$. 
The associated variety $X_V$ of $V$ is the reduced scheme of the spectrum of $R_V$, 
$X_V=\on{Specm}(R_V)$.

In \cite[Conjecutre 1 (2)]{AM16}, we stated the following conjecture.
   \begin{Conj}\label{Conj:equidim}
    Let $V{=\oplus_{d \geq 0} V_d}$ be a simple, finitely strongly generated 
    (i.e., $R_V$ is finitely generated), positively graded 
    conformal vertex operator algebra such that $V_0=\C$.
Assume that $X_V$ has finitely many symplectic leaves. 
    Then $X_V$  is irreducible.
   \end{Conj}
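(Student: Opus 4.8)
The plan is to split the statement into a Poisson-geometric skeleton, which can be made unconditional, and a genuinely vertex-algebraic core, where the simplicity hypothesis must intervene; I expect the latter to be the real obstacle, in keeping with the fact that the statement is only a conjecture. First I would record the conic structure. The conformal grading $V=\bigoplus_{d\geq 0}V_d$ descends to a non-negative grading $R_V=\bigoplus_{d\geq 0}(R_V)_d$ with $(R_V)_0=\C$, under which the commutative product $\bar a.\bar b=\overline{a_{(-1)}b}$ has degree $0$ while the Poisson bracket $\{\bar a,\bar b\}=\overline{a_{(0)}b}$ has degree $-1$. Hence $X_V$ is a Poisson cone: it carries a contracting $\C^\times$-action with the single fixed point $0$, and the bracket is homogeneous of weight $-1$. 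Two consequences are immediate: $X_V$ is connected, and every irreducible component is $\C^\times$-stable and so passes through $0$.

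Next I would reduce the theorem to a count of components. Using the standard characteristic-$0$ facts that the nilradical and the minimal primes of a Poisson algebra are Poisson ideals, and that here they are homogeneous, the irreducible components $X_1,\dots,X_k$ of $X_V$ are Poisson subcones, and it suffices to prove $k=1$. On a fixed $X_i$ the maximal rank $2r_i$ of the Poisson bivector is attained on a dense open subset $W_i$; since $X_V$ has finitely many symplectic leaves, $W_i$ is a finite union of $2r_i$-dimensional leaves, and being open in the irreducible $X_i$ it is connected, hence a single leaf $L_i$. As $L_i$ is dense in $X_i$ this forces $2r_i=\dim X_i$, so each component is even-dimensional and equals the closure $\overline{L_i}$ of a unique open symplectic leaf. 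Thus $X_V$ is a finite union of symplectic cones glued along their boundaries, and the theorem asserts that this union has a single member.

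I would then stress that these reductions cannot suffice on their own: the union $(\overline{\O}_{\min}\times\{0\})\cup(\{0\}\times\overline{\O}_{\min})$ inside $\fing\oplus\fing$, with its restricted Lie--Poisson structure, is a reducible Poisson cone with a weight $-1$ bracket and finitely many symplectic leaves, so no purely Poisson-geometric argument can force irreducibility. The hypotheses that $V$ be simple with $V_0=\C$ must therefore be used essentially. The mechanism I would pursue runs through the singular support: by Li's theory $\gr V$ is generated as a differential algebra by $R_V$, which yields a closed embedding $\on{SS}(V):=\Spec(\gr V)\hookrightarrow J_\infty X_V$ into the arc space together with a surjection $\on{SS}(V)\twoheadrightarrow X_V$ induced by truncation (the section coming from $\gr V\twoheadrightarrow R_V$). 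Since the continuous surjective image of an irreducible space is irreducible, it then suffices to prove that $\on{SS}(V)$ is irreducible, and one hopes that simplicity of $V$ forbids any nontrivial decomposition of $\gr V$ refining the two-cone picture.

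The hard part is precisely this last implication: converting ``$V$ has no proper nonzero vertex ideal'' into irreducibility of $\on{SS}(V)$, equivalently into uniqueness of the open symplectic leaf of $X_V$. The difficulty is that passing to the associated graded does not preserve simplicity, and $\on{SS}(V)$ may be non-reduced and a priori reducible even for simple $V$; moreover $X_V$ is connected, so a splitting into components is not visible through idempotents and cannot be removed by any elementary central argument. What one really needs is to show that a decomposition $X_V=\overline{L_1}\cup\overline{L_2}$ through $0$ would propagate up the Li filtration to a genuine decomposition of $V$ itself, contradicting simplicity. Controlling $\on{SS}(V)$ finely enough to carry this out — as opposed to the formal reductions of the first two steps — is where I expect the genuine work, and the conjectural status of the statement, to reside.
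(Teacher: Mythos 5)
You have not proved the statement, and no proof should have been expected: this is Conjecture~\ref{Conj:equidim}, which the paper states as an open conjecture (Conjecture 1~(2) of \cite{AM16}) and never proves in general. What the paper does instead is \emph{verify} the conjecture for every known example where $X_V$ has finitely many symplectic leaves: for the affine vertex algebras of Table~\ref{Tab:known-cases} it identifies $X_V$ with a nilpotent orbit closure $\overline{\O}$, which is irreducible for free, and for the (conjecturally simple) $W$-algebra quotients $H^0_{DS,f}(V_k(\fing))$ it identifies $X_V$ with a nilpotent Slodowy slice $\Slo_{\O,f}=\overline{\O}\cap\Slo_f$ and then proves irreducibility by nilpotent-orbit geometry: the number of irreducible components of $\Slo_{\O,f}$ equals the number of branches of $\overline{\O}$ at $f$ (\cite{FJLS15}), normality of $\overline{\O}$ implies unibranchness (Lemma~\ref{Lem:normal}), and normality or absence of branching is checked case by case via the Kraft--Procesi minimal-degeneration tables \cite{KraftProcesi82} in classical types and the branching tables of \cite{Shoji80,BeynonSpaltenstein84} in exceptional types. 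None of this uses simplicity of $V$, the Li filtration, or any general Poisson mechanism; it is entirely orthogonal to your strategy.

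As for your unconditional reductions, they are essentially sound and standard: the conic structure with product of degree $0$ and bracket of degree $-1$, the fact that minimal primes are homogeneous Poisson ideals, and the conclusion that each component is the closure of a unique open symplectic leaf (your ``connected open set is one leaf'' step implicitly needs that finitely many leaves are algebraic and locally closed, a Brown--Gordon-type fact, and your even-dimensionality conclusion is really established simultaneously with, not after, the single-leaf claim --- a finite union of $2r_i$-dimensional locally closed leaves can only cover $W_i$ if $2r_i=\dim X_i$, whence the leaves are open in $W_i$ and connectedness applies). Your reducible two-cone example correctly shows the hypotheses $V$ simple, $V_0=\C$ must enter, and the passage through $\on{SS}(V)\hookrightarrow J_\infty X_V$ with the truncation retraction onto $X_V$ is the right general framework. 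The genuine gap is exactly the one you name --- converting simplicity of $V$ into irreducibility of $\on{SS}(V)$ or uniqueness of the open leaf --- and that gap is the conjecture itself; note moreover that irreducibility of $\on{SS}(V)$ is a priori \emph{stronger} than what is asked, so even a complete solution along your lines would be proving more than Conjecture~\ref{Conj:equidim}. In short: your attempt is honest preparatory work toward a general proof that does not exist in the paper, while the paper buys irreducibility only for the specific varieties $\overline{\O}$ and $\Slo_{\O,f}$ by deep but case-specific results on nilpotent orbits.
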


In the following we list 
the known examples of vertex algebras whose associated 
variety has finitely many symplectic leaves.

\subsection{Lisse vertex algebras}
Recall \cite{Zhu96} that a vertex algebra is {\em lisse} (or {\em $C_2$-cofinite}) if 
$\dim X_V=0$. 
Under the assumptions on $V$ of Conjecture~\ref{Conj:equidim}, 
$R_V=\oplus_{d \geq 0} (R_V)_d$ and $(R_V)_0=\C$, so 
$X_V$ is conic, that is, 
admits a $\C^*$-action that is contracting to a point. 
Therefore, under the assumptions on $V$ of Conjecture~\ref{Conj:equidim}, 
$V$ is lisse if and only if $X_V=\{\rm{point}\}$.
Hence, if so, $X_V$ is a trivial symplectic variety and is irreducible.

\subsection{Affine vertex algebras}
Let $\fing$ be a complex simple Lie algebra, and 
$\affg$ the affine Kac-Moody algebra associated with $\fing$:
\begin{align*}
\affg=\fing\*\C[t,t^{-1}]\+ \C K \+ \C D,
\end{align*}
where the commutation relations are given by
\begin{align*}
[x\*t^m,y\*t^n]=[x,y]\*t^{m+n}+m(x|y)\delta_{m+n,0}K,\quad 
[K,\affg]=0,\quad
[D,x\*t^m]=mx\*t^{m}
\end{align*}
for $x,y\in \fing$, $m,n\in \Z$. 
Here $(~|~)=\displaystyle{\frac{1}{2h^\vee}\times}$Killing form of $\fing$.
For $k\in \C$, set 
\begin{align*}
V^k(\fing)=U(\affg)\*_{U(\fing\* \C[t]\+ \C K\+ \C D)}\C_k,
\end{align*}
where $\C_k$ is the one-dimensional representation of $\fing\* \C[t]\+ \C K\+ \C D$
on which $K$ acts as multiplication by $k$ and $\fing\* \C[t]\+ \C D$ acts trivially.
As it is well-known
 $V_k(\fing)$
 is naturally a vertex algebra, 
 called the {\em universal affine vertex algebra} associated with $\fing$ at level $k$.
 The unique simple (graded) quotient  $V_k(\fing)$ is called
the  {\em simple affine vertex algebra} associated with $\fing$ at level $k$. 
 
 We have $X_{V^k(\fing)}=\fing^*$,
 and therefore
  the associated variety $X_{V_k(\fing)}$ is a  Poisson subscheme of $\fing^*$
which is $G$-invariant and conic, where $G$ is the adjoint group of $\g$.
Thus, identifying $\g$ with $\g^*$ through $(~|~)$, the symplectic leaves 
of $X_{V_k(\fing)}$ are exactly the adjoint orbits 
of $G$ in $X_{V_k(\fing)}$, 
and $X_{V_k(\fing)}$  has finitely many symplectic leaves if and only if $X_{V_k(\fing)}$ is contained in the nilpotent cone
$\mathcal{N}$ of $\fing$.

We list in Table \ref{Tab:known-cases} the known pairs $(\g,k)$  
where $X_{V_k(\fing)}\subset \mc{N}$.
 We do not 
claim that the list is exhaustive. 

 {\footnotesize\begin{table}[ht]
 \begin{center}
\begin{tabular}{|c|c|c|c|}
\hline
&type of $\g$ &$k$ & $X_{V_k(\fing)}$ \\[0.25em]
\hline &&& \\[-0.75em]
(1) & any & $-h^\vee$ &  $\mc{N}$ \\[0.25em]
\hline &&& \\[-0.75em]
(2) &any & admissible & $\overline{\O}_q$\\[0.25em]
\hline &&& \\[-0.75em]
(3) & $G_2$ & $-1$ & $\overline{\mathbb{O}_{min}}$ \\[0.25em]
\hline &&& \\[-0.75em]
(4) & $D_4$, $E_6$, $E_7$, $E_8$ & 
$k \in \Z$, 
$-\frac{h^{\vee}}{6}-1\leq k\leq -1$ & $\overline{\mathbb{O}_{min}}$\\[0.25em]
\hline &&& \\[-0.75em]
(5) & $D_r$ with $r\geq 5$ & $-2,-1$ & $\overline{\mathbb{O}_{min}}$ \\[0.25em]
\hline &&& \\[-0.75em]
(6) & $D_r$ with $r$ an even integer & $2-r$ & 
$\overline{ \mathbb{O}_{(2^{r-2},1^4)}}$ \\[0.25em]
\hline &&& \\[-0.75em]
(7) & $B_r$ & $-2$ &$\overline{ \mathbb{O}_{short}}$ \\[0.25em]
\hline
\end{tabular}
 \end{center}
\vspace{.125cm}
\caption{Known pairs ($\fing,k$) for which 
$X_{V_k(\fing)} \subset\mc{N}$} \label{Tab:known-cases}
 \end{table}}
 In case~(1) of Table \ref{Tab:known-cases}, $V_{-h^{\vee}}(\fing)$ does not satisfy the assumption
of Conjecture \ref{Conj:equidim} since it
 is not conformal,
 but the irreducibility of $X_{V_{-h^{\vee}}(\fing)}$ holds.
 
In case~(2) of Table \ref{Tab:known-cases}, $\O_q$ is a nilpotent orbit of $\fing$ described by 
Tables 2--10 of \cite{Ara09b} which only depends on the denominator 
$q$ of the admissible level $k \in \Q$. 

In cases~(3)--(5) of Table \ref{Tab:known-cases}, 
$\O_{min}$ is the minimal nilpotent orbit of $\fing$.

The statement of the case  (6) of Table \ref{Tab:known-cases} 
was conjectured in \cite{AM16} 
and will be proven in Section \ref{sec:proof-of-AM-conjecuture}. 
Here $\mathbb{O}_{(2^{r-2},1^4)}$ denotes the nilpotent orbit 
of $\mf{so}_{2r}$ corresponding to the partition $(2^{r-2},1^4)$ 
of $2r$ (see Section \ref{sec:normal}).  

In case~(7) of Table \ref{Tab:known-cases}, 
$\O_{short}$ is the 
unique {\em short} nilpotent orbit in $\mf{so}_{2r+1}$.
Here by {\em short nilpotent orbit} we mean the orbit of a short nilpotent element,
and a nilpotent element $e$ of $\g$ is called {\em short} if
for $(e,f,h)$ an $\mf{sl}_2$-triple, 
$$\fing=\fing_{-1}\+\fing_0\+\fing_1,$$
where $\fing_j=\{x\in \fing\mid [h,x]=2jx\}$.
The fact that $X_{V_{-2}(\mf{so}_{2r+1})}=\overline{\O_{short}}$ is new and 
will be proven in Theorem \ref{Th:short}. 

 In all the cases of Table~\ref{Tab:known-cases}, 
 $X_{V_k(\fing)}$ is the closure of some nilpotent orbit in $\mc{N}$ 
 and so Conjecture~\ref{Conj:equidim} holds.

 \subsection{Simple $W$-algebras}
Let $\W^k(\fing,f)$ be the {\em affine $W$-algebra} associated with 
a nilpotent element $f$ of $\fing$ 
defined by the generalized quantized Drinfeld-Sokolov reduction
\cite{FF90,KacRoaWak03}:
$$\W^k(\fing,f)=H^{0}_{DS,f}(V^k(\fing)).$$
Here $H^{\bullet}_{DS,f}(M)$ denotes the BRST 
cohomology of the  generalized quantized Drinfeld-Sokolov reduction
associated with $f \in \mc{N}$ with coefficients in 
a $V^k(\fing)$-module $M$.

We have \cite{De-Kac06,Ara09b} a natural isomorphism
$R_{\W^k(\fing,f)}\cong \C[\Slo_{f}]$ of Poisson algebras, so that
\begin{align*}
 X_{\W^k(\fing,f)}= \Slo_{f}.
\end{align*}
Here $ \Slo_{f}$ is the Slodowy slice at $f$ as in the introduction 
(cf.\ Section \ref{subsec:nilslo}). 
It has a natural Poisson structure induced from that of $\fing^*$ (\cite{GanGin02}).  

Let  $\W_k(\fing,f)$ be  the unique simple (graded) quotient of
$\W^k(\fing,f)$.
Then
$X_{\W_k(\fing,f)}$
 is a $\C^*$-invariant Poisson
subvariety of the Slodowy slice $\Slo_f$.

Let $\mc{O}_k$ be the category $\mc{O}$ of
$\affg$ at level $k$.
We have a functor
\begin{align*}
 \mc{O}_k\ra \W^k(\fing,f)\on{-Mod}
 ,\quad M\mapsto
 H^0_{DS,f}(M),
\end{align*}
where
$\W^k(\fing,f)\on{-Mod}$ denotes the category
of $\W^k(\fing,f)$-modules.

Let $\on{KL}_k$ be the full subcategory of $\mc{O}_k$ consisting of
objects $M$ on which $\fing$ acts  locally finitely.
Note that $V^k(\fing)$ and $V_k(\fing)$ are objects of $\on{KL}_k$.
 \begin{Th}[{\cite{Ara09b}}]\label{Th:W-algebra-variety}
 \begin{enumerate}
  \item $H_{DS,f}^i(M)=0$ for all $i\ne 0$, $M\in
	\on{KL}_k$.
	In particular, the functor
	$\on{KL}_k\ra \W^k(\fing,f)\on{-Mod}$, $M\mapsto
	H_{DS,f}^{0}(M)$, is exact.

\item
     For any quotient $V$ of $V^k(\fing)$, 
    \begin{align*}
X_{H^{0}_{DS,f}(V)}=X_{V}\cap  \Slo_{f}.
\end{align*}
In particular  
\begin{enumerate}
\item $H_{DS,f}^{0}(V)
	  \ne 0$ if and only if
	  $\overline{G.f}\subset X_V$, 
\item $H_{DS,f}^{0}(V)$ is lisse if $X_V=\overline{G.f}$.
\end{enumerate}
  \label{item:intersection}
    \end{enumerate}
\end{Th}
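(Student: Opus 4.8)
The plan is to argue directly from the BRST complex computing the reduction and then to pass to its classical (associated graded) limit for the associated-variety statement.

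\emph{Part (1).} First I would write the reduction complex as $C^{\bullet}(M)=M\* F^{ch}\* F^{ne}$, where $F^{ch}$ is the charged fermion space attached to $\fing_{>0}[t,t^{-1}]$, $F^{ne}$ is the neutral fermion space attached to $\fing_{1/2}$ (present only to absorb the odd $\on{ad}h$-eigenvalues), and the total differential is $d=d_{st}+d_{\chi}$; here $d_{st}$ is the standard (Chevalley--Eilenberg-type) differential of $\fing_{>0}[t,t^{-1}]$ and $d_{\chi}$ is the twist by the regular character $\chi=(f\,|\,\cdot\,)$ attached to the $\mf{sl}_2$-triple. I would then put a decreasing filtration on $C^{\bullet}(M)$, coming from a good (PBW) filtration on $M$ together with the conformal grading, chosen so that the induced differential on $\gr C^{\bullet}(M)$ is purely $d_{st}$; thus the $E_1$-page of the resulting spectral sequence is the cohomology of a Koszul-type complex with coefficients in $\gr M\* \C_{\chi}$. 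The point of the construction is that $\chi$ restricts to a nondegenerate character adapted to $f$, so a Kostant-type regularity argument forces this $E_1$-cohomology to be concentrated in degree $0$.

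The role of $\on{KL}_k$ rather than all of $\mc{O}_k$ is to make the filtration bounded and exhaustive on each relevant weight space: since $\fing$ (hence $\fing\*\C[t]$) acts locally finitely on $M\in\on{KL}_k$, every weight space meets only finitely many filtration steps, the spectral sequence converges, and the degree-$0$ concentration on $E_1$ passes to $H^{\bullet}_{DS,f}(M)$. This gives $H^i_{DS,f}(M)=0$ for $i\ne 0$, and exactness of $H^0_{DS,f}$ on $\on{KL}_k$ then follows at once from the long exact cohomology sequence of a short exact sequence in $\on{KL}_k$. I expect convergence to be the delicate point, since for a general object of $\mc{O}_k$ higher cohomology genuinely survives; local finiteness of the $\fing$-action is exactly the hypothesis that rules this out.

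\emph{Part (2).} For a quotient $V$ of $V^k(\fing)$, the good filtration on $V$ induces one on $H^0_{DS,f}(V)$, and $\gr$ of the BRST complex is the Koszul complex computing the \emph{classical} Hamiltonian reduction of $R_V=\C[X_V]$ by the $\fing_{>0}$-moment map at $\chi$. By Gan--Ginzburg this classical reduction of all of $\fing^{*}$ is precisely the Slodowy slice $\Slo_f$ (the relevant unipotent action on the zero fibre is free and the slice is a transverse section), so the reduction of the closed $G$-invariant conic subvariety $X_V\subset\fing^{*}$ is $X_V\cap\Slo_f$. Using the vanishing from Part (1) to degenerate the filtration spectral sequence, so that $\gr H^0_{DS,f}(V)\cong H^0(\gr C^{\bullet}(V))$ with no higher correction term, I would identify $R_{H^0_{DS,f}(V)}\cong \C[X_V\cap\Slo_f]$ as Poisson algebras, whence $X_{H^0_{DS,f}(V)}=X_V\cap\Slo_f$ after passing to reduced schemes; the special case $V=V^k(\fing)$, where $X_V=\fing^{*}$, recovers $X_{\W^k(\fing,f)}=\Slo_f$. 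The two consequences then follow formally: for (a), $H^0_{DS,f}(V)\ne 0$ iff $X_V\cap\Slo_f\ne\emptyset$, and since $X_V$ is a closed $G$-invariant cone while the Kazhdan $\C^{*}$-action contracts $\Slo_f$ to $f$ and preserves $X_V$, this happens exactly when $f\in X_V$, i.e.\ when $\overline{G.f}\subset X_V$; for (b), if $X_V=\overline{G.f}$ then transversality gives $\dim(\overline{G.f}\cap\Slo_f)=\dim\overline{G.f}-\dim G.f=0$ and the contracting $\C^{*}$ forces $X_V\cap\Slo_f=\{f\}$, so $H^0_{DS,f}(V)$ is lisse. The genuine obstacle throughout is the commutation of $\gr$ with cohomology in Part (2): one must know the associated graded Koszul complex has homology only in degree $0$, which is precisely the classical shadow of the regularity of $\chi$ exploited in Part (1).
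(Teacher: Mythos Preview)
The paper does not give its own proof of this theorem: it is quoted from \cite{Ara09b} and stated without argument, so there is nothing in the present text to compare your attempt against.

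That said, your sketch is broadly the strategy of the cited reference and is sound in outline. Two points deserve care. In Part~(1), the filtration actually used is tailored to the $\on{ad}h$-grading (through a tensor-product decomposition of the BRST complex) rather than a naive PBW filtration on $M$; your phrasing that the induced differential on $\gr C^\bullet(M)$ is ``purely $d_{st}$'' while the $E_1$-page is already ``Koszul with coefficients in $\gr M\otimes\C_\chi$'' conflates two different decompositions, though the conclusion---degree-$0$ concentration from regularity of $\chi$, and convergence from local $\fing$-finiteness---is correct. In Part~(2), passing from $\gr H^0_{DS,f}(V)\cong H^0(\gr C^\bullet(V))$ to $R_{H^0_{DS,f}(V)}\cong \C[X_V\cap\Slo_f]$ requires checking that the filtration induced from the complex agrees with the canonical Li filtration on the vertex algebra $H^0_{DS,f}(V)$; this is a genuine separate verification in \cite{Ara09b}, not automatic. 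These are refinements rather than gaps; the architecture you describe is the right one.
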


By Theorem \ref{Th:W-algebra-variety} (1),
$H^{0}_{DS,f}(V_k(\fing))$ is a quotient vertex algebra of $\W^k(\fing,f)$ if it is nonzero.
Conjecturally \cite{KacRoaWak03,KacWak08},
we have
\begin{align*}
\W_k(\fing,f)\cong H^{0}_{DS,f}(V_k(\fing))\text{ provided that }H^{0}_{DS,f}(V_k(\fing))\ne 0.
\end{align*}
(This conjecture has been verified in many cases \cite{Ara05,Ara07,Ara08-a}.)

By Theorem \ref{Th:W-algebra-variety} (2),
 when $X_{V_k(\fing)} \subset \mc{N}$, 
then $X_{H^{0}_{DS,f}(V_k(\fing))}$ is contained in $\Slo_f \cap \mc{N}$
and so has finitely many symplectic leaves. 

One of the purposes of the paper is to investigate 
 the irreducibility of $X_V$ 
for $V=H^{0}_{DS,f}(V_k(\fing))$ 
with
$X_{V_k(\fing)}\subset \mc{N}$.

\section{Branching and nilpotent Slodowy slices} 
We collect in this section the results that we need about branchings 
and nilpotent Slodowy slices. 
We refer to \cite[Chap.~III, \S4.3]{EGAIII} for the original 
definition of {\em unibranchness}, and to 
\cite{KraftProcesi82} or \cite{FJLS15} 
for further details on branchings 
and nilpotent Slodowy slices. 
 
\subsection{Smoothly equivalent singularities, cross sections} 
Consider two varieties $X,Y$ and two points $x \in X$, $y \in Y$. 
The singularity of $X$ at $x$ is called {\em smoothly equivalent} 
to the singularity of $Y$ at $y$ if there is a variety $Z$, a point $z \in Z$ 
and two maps $\varphi \colon Z \to X$, $\psi\colon Z \to Y$, 
such that $\varphi(z)=x$, $\psi(z)=y$, and $\varphi$ and $\psi$ 
are smooth in $z$ (\cite{Hesselink76}). This clearly defines an equivalence relation 
between pointed varieties $(X,x)$. We denote the equivalence 
class of $(X,x)$ by $\Sing(X,x)$. 

Various geometric properties of $X$ at $x$ only depends on the 
equivalence class $\Sing(X,x)$, for example: {\em smoothness, 
normality, seminormality (cf.~\cite[\S16.1]{KraftProcesi82}), unibranchness (cf.~\S\ref{subsec:branching}), Cohen-Macaulay, rational singularities.}

Assume that the algebraic group $G$ acts regularly on the variety $X$. 
Then $\Sing(X,x)=\Sing(X,x')$ if $x$ and $x'$ belongs to the same 
$G$-orbit $\O$. In this case, we denote the equivalence class also 
by $\Sing(X,\O)$. 

A {\em cross section} (or {\em 
transverse slice}) at the point $x \in X$ is defined to be a locally 
closed subvariety $S \subset X$ such that $x \in S$ and the map 
$$G \times S \longrightarrow X, \quad (g,s) \longmapsto g.s,$$
is smooth at the point $(1,x)$. 
We have $\Sing(S,x)=\Sing(X,x)$. 

In the case where $X$ is the closure of some nilpotent $G$-orbit 
of $\fing$, there is a natural choice of a cross section 
as we explain next subsection. 

\subsection{Nilpotent Slodowy slice}\label{subsec:nilslo}
Let $\O,\O'$ be two nonzero nilpotent orbits of $\fing$ and pick $f \in \O'$. 
By the Jacobson-Morosov Theorem, we can embed $f$ into an 
$\mf{sl}_2$-triple $(e,h,f)$ of $\fing$. 
The affine space 
\begin{align*}
\Slo_f:=f+\g^{e}
\end{align*}
is a transverse slice of $\fing$ at $e$, 
called the {\em Slodowy slice} associated with $(e,h,f)$. 
There is a $\C^*$-action on $\Slo_f$ contracting to $f$ (cf.~\cite{GanGin02}). 
The variety 
\begin{align*}
\Slo_{\O,f}:=\overline{\O}\cap \Slo_f 
\end{align*}
is then a transverse slice of $\overline{\O}$ at $f$, 
which we call, following the terminology of \cite{FJLS15}, 
a {\em nilpotent Slodowy slice}. 

Note that $\Slo_{\O,f}=\{f\}$ if and only if $\O=G.f$. 
Moreover, since the $\C^*$-action 
on $\Slo_f$ 
is contracting to $f$ and stabilizes $\Slo_{f,\O}$, 
$\Slo_{\O,f}=\varnothing$ if and only if $G.f \not \subseteq \overline{\O}$. 
Hence we can assume that $\O' \subseteq \overline{\O}$, 
that is, $\O' \leq \O$ for the Chevalley order on nilpotent orbits. 
The variety $\Slo_{\O,f}$ is equidimensional, and 
\begin{align*}
\dim \Slo_{\O,f}= \on{codim}(\O',\overline{\O}).
\end{align*}

Since any two $\mf{sl}_2$-triples containing  $f$ 
are conjugate by an element of the isotropy group 
of $f$ in $G$, 
the isomorphism type of $\Slo_{\O,f}$ is independent of 
the choice of such $\mf{sl}_2$-triples. 
Moreover, the isomorphism type of $\Slo_{\O,f}$ is 
independent of the choice of $f \in \O'$. 
By focussing on $\Slo_{\O,f}$, we reduce the study 
of $\Sing(\overline{\O},\O')$ to the study 
of the singularity of $\Slo_{\O,f}$ at $f$. 

The variety $\Slo_{\O,f}$ is not always irreducible.  
We are now interested in sufficient conditions for that $\Slo_{\O,f}$ 
is irreducible. 

\subsection{Branching} \label{subsec:branching} 
Let $X$ be an irreducible algebraic variety, and $x \in X$. 
We say that $X$ is {\em unibranch at $x$} if the normalization 
$\pi \colon (\tilde{X},x) \to (X,x)$ of $(X,x)$ 
is locally a homeomorphism at $x$ 
\cite[\S2.4]{FJLS15}. Otherwise, we say that $X$ {\em has branches 
at $x$} 
and the number of branches of $X$ at $x$ 
is the number of connected components 
of $\pi^{-1}(x)$ \cite[\S5,(E)]{BeynonSpaltenstein84}. 

As it is explained in \cite[Section 2.4]{FJLS15}, 
the number of irreducible components of 
$\Slo_{\O,f}$ is equal to the number of branches of $\overline{\O}$ 
at $f$. 

If an irreducible algebraic variety $X$ is normal, 
then it is obviously unibranch at any point $x \in X$. 
Hence we obtain the following result. 

\begin{Lem} \label{Lem:normal} 
Let $\O,\O'$ be nilpotent orbits of $\fing$, with $\O' \leq \O$ 
and $f \in \O'$. 
If $\overline{\O}$ is normal, then $\Slo_{\O,f}$ is 
irreducible. 
\end{Lem}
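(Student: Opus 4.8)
The plan is to assemble the statement from the three ingredients already laid out in the excerpt, so the proof will be essentially a one-line deduction once the pieces are recalled. The key logical chain is: $\overline{\O}$ normal $\Rightarrow$ $\overline{\O}$ unibranch at every point $\Rightarrow$ the number of branches of $\overline{\O}$ at $f$ equals $1$ $\Rightarrow$ $\Slo_{\O,f}$ has exactly one irreducible component $\Rightarrow$ $\Slo_{\O,f}$ is irreducible.

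First I would recall that an irreducible variety that is normal is unibranch at each of its points; this is the remark stated just before the lemma, and it follows because normality forces the normalization map $\pi\colon\tilde{\O}\to\overline{\O}$ to be an isomorphism, so $\pi^{-1}(f)$ is a single point and in particular connected. Hence the number of branches of $\overline{\O}$ at $f$, defined as the number of connected components of $\pi^{-1}(f)$, equals $1$.

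Next I would invoke the result recalled in Section~\ref{subsec:branching} (from \cite[Section 2.4]{FJLS15}) that the number of irreducible components of the nilpotent Slodowy slice $\Slo_{\O,f}$ equals the number of branches of $\overline{\O}$ at $f$. Combining this with the previous step, $\Slo_{\O,f}$ has exactly one irreducible component. Since $\Slo_{\O,f}$ is nonempty (because $\O'\leq\O$ guarantees $G.f\subseteq\overline{\O}$, so $\Slo_{\O,f}\neq\varnothing$ by the discussion in Section~\ref{subsec:nilslo}) and equidimensional, having a single irreducible component means it is irreducible, which is the desired conclusion.

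There is really no substantive obstacle here: the lemma is a formal corollary of the two imported facts, namely the equality ``number of irreducible components of $\Slo_{\O,f}$ = number of branches of $\overline{\O}$ at $f$'' and the triviality of the normalization for normal varieties. The only thing to be careful about is the edge case $\O'=\O$ (equivalently $\O=G.f$), where $\Slo_{\O,f}=\{f\}$ is a point and hence trivially irreducible, so the statement holds degenerately; I would note this but it requires no real argument. The genuine mathematical content—why $\overline{\O}$ is normal in the cases of interest—lies entirely outside this lemma and will presumably be supplied case by case elsewhere in the paper.
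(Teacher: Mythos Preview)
Your argument is correct and follows exactly the paper's approach: the lemma is stated immediately after the observation that a normal irreducible variety is unibranch at every point, together with the fact (quoted from \cite{FJLS15}) that the number of irreducible components of $\Slo_{\O,f}$ equals the number of branches of $\overline{\O}$ at $f$. The paper gives no further proof beyond ``Hence we obtain the following result,'' so your write-up, including the edge case $\O'=\O$, is if anything slightly more detailed than the original.
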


The converse is not true. For instance, there is no branching 
in type $G_2$ but one knowns that the nilpotent orbit $\tilde{A}_1$ 
of $G_2$ 
of dimension $8$ is not normal \cite{LevasseurSmith88}. 

The number of branches of $\overline{\O}$ 
at $f$, and so the number of irreducible components of 
$\Slo_{\O,f}$, can be determined from the tables of Green functions 
in \cite{Shoji80,BeynonSpaltenstein84}, as discussed in 
\cite[Section 5,(E)-(F)]{BeynonSpaltenstein84}.
We indicate in Table \ref{Tab:branching} the nilpotent orbits $\O$ 
which have branchings in types $F_4$, $E_6$, 
$E_7$ and $E_8$ (there is no branching in type $G_2$). 
The nilpotent orbits are labelled using the Bala-Carter classification.  
{\footnotesize\begin{table}[h]
 \begin{center}
\begin{tabular}{|l|l|} 
\hline
Type $F_4$ &$C_3$, $C_3(a_1)$. \\
\hline
Type $E_6$ &$A_4$, $2 A_2$, $A_2+A_1$. \\
\hline
Type $E_7$ &$D_6(a_1)$, $(A_5)''$, $A_4$, $A_3+A_2$, 
$D_4(a_1)+A_1$. \\
\hline
Type $E_8$ &$E_7(a_1)$, $E_6$, $E_6(a_1)$, $E_7(a_4)$, 
$A_6$, $D_6(a_1)$, $D_5+A_1$, $E_7(a_5)$, 
$A_4$, $A_3+A_2$, $D_4$, \\ 
& $D_4(a_1)$, $A_3+A_1$, 
$2 A_2+A_1$. \\
\hline
\end{tabular}
\end{center}
\vspace{.125cm}
\caption{Branching in exceptional cases} \label{Tab:branching}
\end{table}}

We indicate below the (conjectural) list a non-normal nilpotent 
orbit closures in the exceptional types. These results 
are extracted from \cite{LevasseurSmith88,Kraft89,Broer98a,Broer98b,Sommers03}. 
The list is known to be exhaustive for the types $G_2$, $F_4$ and $E_6$. 
It is only conjecturally exhaustive for the types $E_7$ 
and $E_8$. 

{\footnotesize\begin{table}[h]
 \begin{center}
\begin{tabular}{|l|l|} 
\hline
Type $G_2$ &$\tilde{A}_1$. \\
\hline
Type $F_4$ &
$C_3$, 
$C_3(a_1)$, 
$\tilde{A}_2+A_1$, 
$\tilde{A}_2$, 
$B_2$. \\
\hline
Type $E_6$ &
$A_4$, 
$A_3+A_1$, 
$A_3$, 
$2 A_2$, 
$A_2+A_1$. \\
\hline
Type $E_7$ &
$D_6(a_1)$, 
$D_6(a_2)$, 
$(A_5)''$, 
$A_4$, 
$A_3+A_2$, 
$D_4(a_1)+A_1$, 
$A_3+2A_1$, \\
& $(A_3+A_1)'$,  
$(A_3+A_1)''$, 
$A_3$. \\
\hline
Type $E_8$ & 
$E_7(a_1)$, 
$E_7(a_2)$, 
$D_7(a_1)$, 
$E_7(a_3)$,  
$E_6$, 
$D_6$,
$E_6(a_1)$, 
$E_7(a_4)$, 
$D_6(a_1)$, \\
& $A_6$, 
$D_5+A_1$, 
$E_7(a_5)$, 
$E_6(a_3)+A_1$, 
$D_6(a_2)$, 
$D_5(a_1)+A_2$, 
$A_5+A_1$, \\
& $D_5$, 
$E_6(a_3)$, 
$D_4+A_2$, 
$D_5(a_1)+A_1$,  
$A_5$, 
$D_5(a_1)$, 
$D_4+A_1$, 
$A_4$, \\
& $A_3+A_2$, 
$A_3+2 A_1$, 
$D_4$,   
$D_4(a_1)$
$A_3+A_1$,  
$2A_1+A_1$,  
$A_3$.\\
\hline
\end{tabular}
\end{center}
\vspace{.125cm}
\caption{Non-normal nilpotent 
orbits in exceptional cases} \label{Tab:non-normal}
\end{table}}

\section{Normality of nilpotent orbit closures in Lie algebras 
of classical type} \label{sec:normal} 

In view of Lemma \ref{Lem:normal}, 
we recall in this section some useful results about the 
normality of nilpotent orbit closures in the classical types. 
The normality question in this setting is now completely answered
(\cite{KraftProcesi79,KraftProcesi79,Sommers05}).
 
First of all, by \cite{KraftProcesi79}, if $\fing=\sl_n$, 
then all nilpotent orbit closures are normal. 
So we only focus on the orthogonal and symplectic Lie algebras. 
We assume in the rest of the section that $\g$ is either the Lie algebra 
$\mf{o}_n$ of the orthogonal group $O(n)$, or 
the Lie algebra $\so_n$ of the special orthogonal group $SO(n)$, 
or the Lie algebra $\sp_n$ of the symplectic group $SP(n)$. 

\subsection{Notations} 
We fix in this subsection some notations relative to nilpotent 
orbits in simple Lie algebras of classical type. 
Our main references are 
\cite{KraftProcesi82,CMa}. 
We follow the notations of \cite[Appendix]{MorYu16}; 
see therein for more details.  

Let $n\in \N^{*}$, and denote by $\P(n)$ the set 
of partitions of $n$. 
As a rule, unless otherwise specified, we write 
an element $\bs{\lambda}$ 
of $\P(n)$ 
as a decreasing sequence $\bs{\lambda}=(\lambda_1,\ldots,\lambda_r)$ 
omitting the zeroes. 
Thus, 
$$
\lambda_1 \geq \cdots \geq \lambda_r \geq 1\quad 
\text{ and }\quad  
\lambda_1 + \cdots + \lambda_r  = n.
$$ 

Let us denote by $\geqslant$ the partial order on $\P(n)$ relative 
to dominance. More precisely, given
$\bs{\lambda} = ( \lambda_{1},\cdots ,\lambda_{r}),
\bs{\eta} = (\mu_{1},\dots ,\mu_{s}) \in \P (n)$,  
we have $\bs{\lambda}\ \geqslant \bs{\eta}$ if 
$
\sum_{i=1}^{k} \lambda_{i} \geqslant \sum_{i=1}^{k} \mu_{i}
$
for $1\leqslant k\leqslant \min (r,s)$.

\subsubsection{Case $\sl_n$}
By \cite[Theorem~5.1.1]{CMa}, nilpotent orbits of $\sl_{n}$ are 
parametrized by $\P(n)$. For $\bs{\lambda}\in\P(n)$, 
we shall denote by 
$\O_{\bs{\lambda}}$ the corresponding nilpotent orbit of 
$\sl_n$. 
If $\bs{\lambda}, \bs{\eta} \in \P (n)$, then 
$\O_{\bs{\eta}} \subset \overline{\O_{\bs{\lambda}}}$ if and only if
$\bs{\eta} \leqslant \bs{\lambda}$.

\subsubsection{Cases $\mf{o}_n$ and $\so_n$}
For $n\in\N^*$, set 
$$
\P_{1}(n):=\{\bs{\lambda} \in \P(n)\; ; \; \text{number 
of parts of each even 
number is even}\}.
$$ 
By \cite[Theorems 5.1.2 and 5.1.4]{CMa}, 
nilpotent orbits of $\so_{n}$ 
are parametrized by $\P_1(n)$, with the exception that each 
{\em very even} 
partition $\bs{\lambda} \in\P_{1}(n)$ (i.e., $\bs{\lambda}$ has only even parts) 
corresponds to two nilpotent orbits.
For $\bs{\lambda}\in \P_1(n)$, not very even, we shall denote by 
$\O_{\bs{1,\lambda}}$, 
or simply by $\O_{\bs{\lambda}}$ when there is no possible confusion, 
the corresponding nilpotent orbit of $\so_n$. 
For very even $\bs{\lambda}\in \P_1(n)$, we shall denote by 
$\O_{1,\bs{\lambda}}^{I}$ 
and $\O_{1,\bs{\lambda}}^{I\!I}$ the two corresponding nilpotent orbits 
of $\so_n$.  
In fact, their union forms a single $O(n)$-orbit. 
Thus nilpotent orbits of $\mf{o}_{n}$ 
are parametrized by $\P_1(n)$. 

If $\bs{\lambda},\bs{\eta}\in\P_{1} (n)$, then 
$\overline{\O_{1,\bs{\eta}}^{\bullet}} \subsetneq
\overline{\O_{1,\bs{\lambda}}^{\bullet}}$ if and only if
$\bs{\eta} < \bs{\lambda}$, 
where $\O_{1,\bs{\lambda}}^{\bullet}$ is either 
$\O_{1,\bs{\lambda}}$, $\O_{1,\bs{\lambda}}^{I}$ or 
$\O_{1,\bs{\lambda}}^{II}$ according 
to whether $\bs{\lambda}$ is very even or not.

Given $\bs{\lambda} \in\P(n)$, there exists a unique 
$\bs{\lambda}^{+}
\in \P_{1}(n)$ such that $\bs{\lambda}^{+} \leq \bs{\lambda} $, and if 
$\bs{\eta}\in\P_{1}(n)$ verifies $\bs{\eta} \leq\bs{\lambda} $, then 
$\bs{\eta}\leq \bs{\lambda} ^{+}$.

\subsubsection{Case $\sp_{n}$} 
For $n\in\N^*$, set 
$$
\P_{-1}(n):=\{\bs{\lambda} \in \P(n)\; ; \; \text{number of parts of each odd  
number is even}\}.
$$ 
By \cite[Theorem~5.1.3]{CMa}, nilpotent orbits of $\sp_{n}$ 
are parametrized by $\P_{-1}(n)$. 
For $\bs{\lambda}=(\lambda_{1},\dots ,\lambda_{r})\in \P_{-1}(n)$, 
we shall denote by 
$\O_{-1,\bs{\lambda}}$, 
or simply by $\O_{\bs{\lambda}}$ when there is no possible confusion,  
the corresponding nilpotent orbit of $\sp_{n}$. 
As in the case of $\sl_{n}$, if $\bs{\lambda}, \bs{\eta} \in \P_{-1} (n)$, then 
$\O_{-1,\bs{\eta}} \subset \overline{\O_{-1,\bs{\lambda}}}$ if and only if
$\bs{\eta} \leqslant \bs{\lambda}$. 

Given $\bs{\lambda}\in\P(n)$, there exists a unique 
$\bs{\lambda}^{-} \in \P_{-1}(n)$ 
such that $\bs{\lambda} ^{-} \leq \bs{\lambda} $, 
and if $\bs{\eta}\in\P_{-1}(n)$ verifies 
$\bs{\eta} \leq \bs{\lambda} $, then 
$\bs{\eta}\leq \bs{\lambda} ^{-}$. 

\begin{Def} \label{Def:degeneration}
Let $\bs{\lam} \in \P_{\eps}(n)$. 
An {\em $\eps$-degeneration} of $\bs{\lam}$ is 
an element $\bs{\eta} \in \P_{\eps}(n)$ such that 
$\O_{\eps,\bs{\eta}}\subseteq \overline{\O_{\eps,\bs{\lam}}}$, 
that is, $ \bs{\eta}\leq \bs{\lam}$.
\end{Def}

\subsection{Some general facts}
Let $\O$ be a nilpotent orbit of $\fing$. 
Recall that the singular locus of $\overline{\O}$ is 
$\overline{\O} \setminus \O$. 
This was shown by Namikawa \cite{Namikawa04} using results 
of Kaledin and Panyushev \cite{Kaledin06,Panyushev}; 
see \cite[Section 2]{Henderson} for a recent review. 
This result also follows from 
Kraft and Procesi's work in the 
classical types \cite{KraftProcesi81,KraftProcesi82}, and 
from the main theorem of \cite{FJLS15} 
in the exceptional types.

\begin{Th}[{\cite[Theorem 1]{KraftProcesi82}}] \label{Th:normal-codim} 
Let $\O$ be a nilpotent orbit in $\mf{o}_n$ or $\sp_n$. 
\begin{enumerate}
\item 
$\overline{\O}$ is normal if and only if it is unibranch. 
\item 
$\overline{\O}$ is normal if and only if it is normal 
in codimension 2. 

\end{enumerate}
\end{Th}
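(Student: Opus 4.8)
The plan is to deduce both statements from Serre's normality criterion, $R_1+S_2$, combined with the Kraft--Procesi reduction technique (row and column removal) and the resulting explicit classification of the minimal, i.e.\ codimension-two, degenerations. The first observation is that $\overline{\O}$ is automatically regular in codimension one. Indeed, by the result recalled just above the statement, the singular locus of $\overline{\O}$ is $\overline{\O}\setminus\O$; and since each nilpotent orbit is even-dimensional (it carries the Kirillov--Kostant symplectic form), every boundary orbit $\O'\subsetneq\overline{\O}$ satisfies $\on{codim}(\O',\overline{\O})\geq 2$. Thus $\overline{\O}$ satisfies $R_1$, and Serre's criterion reduces normality to the condition $S_2$. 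In particular the forward implications in (1) and (2) are the trivial ones (a normal variety is unibranch everywhere, as already used for Lemma~\ref{Lem:normal}, and is a fortiori normal in codimension two), so all the content lies in the two converses.

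The geometric input I would bring in is the classification of minimal singularities. Fix a codimension-two orbit $\O'\subset\overline{\O}$ with $f\in\O'$; because codimensions are even, $\O'$ is automatically covered by $\O$, so it is a minimal degeneration and $\Slo_{\O,f}$ is a two-dimensional conical surface germ. Applying the Kraft--Procesi reduction to the pair $\bs{\mu}<\bs{\lambda}$ attached to $\O'<\O$ --- which preserves both $\Sing(\overline{\O},\O')$ and the codimension while lowering the rank --- reduces $\Slo_{\O,f}$ to a single elementary minimal singularity. The classification then gives the dichotomy I will use: such a surface is either a Kleinian (simple) surface singularity, which is normal and hence irreducible and unibranch, or one of the non-normal elementary surfaces, which carries exactly two branches. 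Since, by the fact recalled earlier, the number of branches of $\overline{\O}$ at $f$ equals the number of irreducible components of $\Slo_{\O,f}$, this yields, at every codimension-two point, the equivalence: normal $\iff$ unibranch $\iff$ $\Slo_{\O,f}$ irreducible.

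I would prove the converse in (2) first, as it carries the main difficulty. Assume $\overline{\O}$ is normal in codimension two but, for contradiction, not normal. The non-normal locus is closed, $G$-stable, and (by the $R_1$ remark together with the hypothesis) of codimension $\geq 3$; let $\O'$ be its dense orbit and $f\in\O'$. The transverse slice $S:=\Slo_{\O,f}$ is then conical of dimension $\geq 3$; it is non-normal at its vertex $f$, while on $S\setminus\{f\}$, which meets only the strictly larger orbits lying in the normal locus, it is normal (using transversality of the Slodowy slice to the orbit stratification). Since $S$ is normal away from $f$ and the vertex has codimension $\geq 3>1$, the variety $S$ satisfies $R_1$, so its non-normality at $f$ is a failure of $S_2$. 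To eliminate this I would again invoke the reduction: row/column removal identifies $\Sing(\overline{\O},\O')$ with an elementary minimal singularity of the same codimension $\geq 3$, and in the Kraft--Procesi list these are minimal nilpotent orbit closures of type $b$, $c$, or $d$ in a smaller classical Lie algebra, which are known to be normal --- forcing $S$ to be normal at $f$, the desired contradiction. This step, showing that no \emph{first} failure of normality can occur in codimension $\geq 3$, is the hard part: it is exactly where the explicit Kraft--Procesi models and the normality (equivalently, the $S_2$ property) of minimal orbit closures are indispensable, and where one must carefully control the deeper strata rather than argue purely formally.

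Finally, (1) follows from (2) and the dichotomy. The implication normal $\Rightarrow$ unibranch is immediate. Conversely, suppose $\overline{\O}$ is unibranch at every point. Then at each codimension-two orbit $\O'$ the slice $\Slo_{\O,f}$ has a single branch, hence a single irreducible component, so by the dichotomy it is a Kleinian singularity and $\overline{\O}$ is normal along $\O'$. As there are no boundary orbits of codimension one, $\overline{\O}$ is normal in codimension two, and part (2) upgrades this to normality of $\overline{\O}$.
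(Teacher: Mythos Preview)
The paper does not prove Theorem~\ref{Th:normal-codim}; it is quoted from \cite{KraftProcesi82} without argument, so there is no in-paper proof to compare against. On its own merits your sketch has a genuine gap.

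Your treatment of $R_1$, of the forward implications, and of the codimension-two dichotomy is correct: codimension two forces the degeneration to be minimal, so Table~\ref{Tab:deg} legitimately applies there and the slice is either Kleinian or the two-branched type~$e$. The problem is the converse of~(2). You pass to the dense orbit $\O'$ of the non-normal locus, of codimension $\geq 4$ by evenness, and then assert that row/column removal identifies $\Sing(\overline{\O},\O')$ with an entry of Table~\ref{Tab:deg} of higher codimension, namely a minimal orbit closure $b_n$, $c_n$ or $d_n$. But Table~\ref{Tab:deg} lists only the irreducible \emph{minimal} $\eps$-degenerations, and there is no reason for $\O'<\O$ to be minimal: there may well be orbits strictly between $\O'$ and $\O$, all of them lying in the normal locus of $\overline{\O}$. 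For a non-minimal degeneration the reduction of Proposition~\ref{Pro:deg} still produces an irreducible pair $(\bs{\lambda}',\bs{\eta}')$, but that pair need not appear in the table, and you give no independent argument that the resulting transverse singularity is normal (equivalently $S_2$). The contradiction does not close, and the final paragraph deducing~(1) from~(2) then has nothing to stand on.

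In \cite{KraftProcesi82} this step is not extracted from the minimal-degeneration table alone; it relies on their explicit ortho-symplectic construction of $\overline{\O}$ and of its normalization, from which the needed depth and seminormality information is read off globally. That construction is exactly the missing ingredient your sketch alludes to (``the explicit Kraft--Procesi models'') but does not actually supply.
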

In particular, $\overline{\O}$ is normal if it does not contain a 
nilpotent orbit $\O' \leq \O$ of codimension 2. 
Theorem \ref{Th:normal-codim} does not hold if $\g=\so_{2n}$ 
and if $\O=\O_{1,\bs{\lam}}$, with $\bs{\lam}$ very even. 

\begin{Th}[\cite{Hesselink79}] \label{Th:normal-partition} 
Let $\eps\in\{-1,1\}$ and $\bs{\lam} \in\P_\eps(n)$.  
\begin{enumerate}
\item Assume $\g=\mf{o}_n$. 
If $\lam_1+\lam_2\leq 4$, then $\overline{\O_{1,\bs{\lam}}}$ is normal.
\item Assume $\g=\sp_n$. 
If $\lam_1\leq 2$, then $\overline{\O_{-1,\bs{\lam}}}$ is normal.  
\end{enumerate}
\end{Th}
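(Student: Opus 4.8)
The plan is to derive normality from the codimension-two criterion of Kraft and Procesi recorded in Theorem~\ref{Th:normal-codim}. Write $\O=\O_{\eps,\bs\lam}$. By part~(2) of that theorem, $\overline{\O}$ is normal as soon as it is normal in codimension $2$, and since $\Sing\overline{\O}=\overline{\O}\setminus\O=\bigcup_{\O'<\O}\O'$, the only strata that can obstruct normality are the orbits $\O'=\O_{\eps,\bs\eta}$ with $\on{codim}(\O',\overline{\O})=2$, that is, the minimal $\eps$-degenerations $\bs\eta<\bs\lam$ of codimension $2$. (Nilpotent orbits have even dimension, so a proper degeneration has even, hence $\geq 2$, codimension; there is no codimension-$1$ stratum to worry about.) At such a stratum the nilpotent Slodowy slice $\Slo_{\O,f}$, $f\in\O'$, is a surface; by \S\ref{subsec:branching} its number of irreducible components equals the number of branches of $\overline{\O}$ at $f$, and by part~(1) of Theorem~\ref{Th:normal-codim} (together with the fact that normality and unibranchness depend only on $\Sing(\overline{\O},\O')$) the closure $\overline{\O}$ is normal along $\O'$ precisely when this transverse surface is irreducible. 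Thus everything reduces to showing that, under the hypotheses, every codimension-$2$ minimal degeneration has a single branch.

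Next I would enumerate the codimension-$2$ minimal degenerations compatible with the hypotheses. In the symplectic case $\lam_1\le 2$ forces $\bs\lam=(2^{a},1^{2b})\in\P_{-1}(n)$, and the only covering relations staying inside $\P_{-1}(n)$ replace one part $2$ by two parts $1$, giving $\bs\eta=(2^{a-1},1^{2b+2})$. In the orthogonal case $\lam_1+\lam_2\le 4$ leaves only $\bs\lam$ of the form $(2^{a},1^{c})$ (with $a$ even) or $(3,1^{c})$ in $\P_1(n)$, whose minimal degenerations likewise move a single box between two short rows. For each admissible move I would apply the Kraft--Procesi reduction procedure \cite{KraftProcesi82} --- deleting the rows and columns that $\bs\lam$ and $\bs\eta$ share --- to bring the pair into a small normal form, and then read the transverse singularity off the table of minimal degenerations in \cite{FJLS15}. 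The claim to establish is that every move permitted by the bounds yields one branch and a normal transverse singularity (a Kleinian surface singularity $A_k$), never one of the two-branch surfaces on the Kraft--Procesi list.

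The crux, and the \emph{main obstacle}, is exactly this combinatorial dictionary. Kraft and Procesi show that a codimension-$2$ minimal degeneration in types $B$, $C$, $D$ reduces to one of a short list of elementary surface singularities, some unibranch (normal) and some with two branches (non-normal), the type being dictated by the lengths and parities of the rows involved in the box move and by the collapse that restores the $\eps$-condition after the move. What the theorem really asserts is that the two-branch, non-normal elementary degenerations require a row of length $\geq 3$ in the symplectic case and a box move among the two longest rows with $\lam_1+\lam_2\geq 5$ in the orthogonal case; the bounds $\lam_1\le 2$ and $\lam_1+\lam_2\le 4$ are precisely what forbids these configurations. I would therefore carry out, case by case on the finitely many normal forms produced by the reduction, the verification that the branch number is $1$. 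For the ranges in which all parts are $\le 2$ there is a cleaner shortcut that bypasses the branch count altogether: these orbit closures coincide with symmetric, respectively antisymmetric, determinantal varieties of the form $\{X : X^{2}=0,\ \on{rk}X\le r\}$, whose normality and Cohen--Macaulayness are classical, so that Theorem~\ref{Th:normal-codim} is not even needed; only the orthogonal case with a part equal to $3$ genuinely requires the transverse-slice computation above.
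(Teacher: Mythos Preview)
The paper does not prove this theorem at all: it is quoted verbatim from Hesselink \cite{Hesselink79} and used as a black box. So there is no ``paper's own proof'' to match. Hesselink's 1979 argument predates the Kraft--Procesi machinery you invoke; his proof goes through the identification of these orbit closures with (symmetric/antisymmetric) determinantal varieties and a direct normality argument, essentially the ``shortcut'' you mention at the end of your proposal.

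Your main line of argument---reduce to codimension-$2$ minimal degenerations via Theorem~\ref{Th:normal-codim}, then apply the Kraft--Procesi reduction and read off the transverse type from Table~\ref{Tab:deg}---is a legitimate alternative route, but it has a genuine gap in the very even case. Take $\g=\mf{o}_{2a}$ with $a\geq 4$ even and $\bs\lam=(2^{a})$, so $\lam_1+\lam_2=4$. The unique minimal $1$-degeneration is $\bs\eta=(2^{a-2},1^{4})$; stripping the $a-2$ common rows of $2$'s gives the irreducible pair $(2,2)\to(1^{4})$ in $\mf{so}_{4}$, which is \emph{type~$e$} with $n=1$ in Table~\ref{Tab:deg}, of codimension $2$ with transverse singularity $A_{1}\cup A_{1}$. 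This is precisely one of the two-branch surfaces you asserted cannot occur, so your case analysis is incorrect as stated. This is also exactly the situation singled out by the remark following Theorem~\ref{Th:normal-partition}: for very even $\bs\lam$ the $O(n)$-orbit closure is reducible and Theorems~\ref{Th:normal-codim} and~\ref{Th:normal-partition} must be handled with care (the $SO$-orbit closures were only settled later by Sommers~\cite{Sommers05}).

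In short: your determinantal ``shortcut'' is not a shortcut but the actual content of Hesselink's theorem, and it is what you should lead with; the Kraft--Procesi argument you sketch as the main proof is circular in spirit (Theorem~\ref{Th:normal-codim} is from 1982) and, more importantly, breaks down on the very even partitions $\bs\lam=(2^{a})$.
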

Theorem~\ref{Th:normal-partition} does not hold if $\g=\so_{2n}$ 
and if $\O=\O_{1,\bs{\lam}}$ with $\bs{\lam}$ very even. 

\subsection{Minimal $\eps$-degeneration}
We present in this subsection the combinatorial 
method developed in \cite{KraftProcesi82} 
to determine the equivalence 
class $\Sing(\overline{\O_{\eps,\bs{\lam}}},\O_{\eps,\bs{\eta}})$, 
for $\eps\in\{-1,1\}$ and $\bs{\eta}<\bs{\lam}$.  

\begin{Def}[{\cite[Definition 3.1]{KraftProcesi82}}]\label{Def:deg-min} 
An $\eps$-degeneration $\bs{\eta}\leq\bs{\lam}$ is called 
{\em minimal} if $\bs{\eta}\not= \bs{\lam}$ and there is 
no $\bs{\nu} \in \P_\eps(n)$ such 
$\bs{\eta} <\bs{\nu}< \bs{\lam}$ (i.e., $\bs{\nu} < \bs{\lam}$ 
are adjacent in the ordering on $\P_\eps(n)$). 
\end{Def}
In geometrical terms this means that the orbit $\O_{\eps,\bs{\eta}}$ 
is open in the complement of $\O_{\eps,\bs{\lam}}$ 
in $\overline{\O_{\eps,\bs{\lam}}}$. 

By Theorem \ref{Th:normal-codim}, 
for the normality question, it is enough to consider minimal 
$\eps$-degeneration of codimension 2 (except for the 
very even nilpotent orbits in $\so_{2n}$). 
Kraft and Procesi introduced a combinatorial equivalence on 
$\eps$-degenerations \cite{KraftProcesi82}.

\begin{Pro}[{\cite[Proposition 3.2]{KraftProcesi82}}] \label{Pro:deg}
Let $\bs{\eta}\leq\bs{\lam}$ be an $\eps$-degeneration. 
Assume that for two integers $r$ and $s$ the first $r$ rows 
and the first $s$ columns of $\bs{\eta}$ and $\bs{\lam}$ 
coincide and that $(\lam_1,\ldots,\lam_r)$ is an element of 
$\P_{\eps}(\lam_1+\cdots+\lam_r)$. 
Denote by $\bs{\eta}'$ and $\bs{\lam}'$ 
be the partitions obtained by erasing these common rows and columns 
of $\bs{\eta}$ and $\bs{\lam}$ respectively and put 
$\eps':=(-1)^s\eps$. 
Then $\bs{\eta}'\leq\bs{\lam}'$ is an $\eps'$-degeneration and
$$\on{codim}(\O_{\eps',\bs{\eta}'},\overline{\O_{\eps',\bs{\lam}'}}) 
=\on{codim}(\O_{\eps,\bs{\eta}},\overline{\O_{\eps,\bs{\lam}}}).$$
\end{Pro}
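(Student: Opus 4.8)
The plan is to compute the codimension directly from the dimension formula for nilpotent orbits in classical Lie algebras and to show that each term transforms controllably under erasing rows and columns. Write $\bs{\lam}^{t}=(\lam^t_1,\lam^t_2,\dots)$ for the dual partition, so $\lam^t_j=\#\{i:\lam_i\geq j\}$ is the length of the $j$-th column, and set $o(\bs{\lam}):=\#\{i:\lam_i\text{ odd}\}$. By the standard formula (see \cite[\S6.1]{CMa}), the centralizer of a representative of $\O_{\eps,\bs{\lam}}$ has dimension
\[\dim Z_{\eps,\bs{\lam}}=\tfrac12\sum_j (\lam^t_j)^2-\tfrac{\eps}{2}\,o(\bs{\lam})=\tfrac12\sum_i(2i-1)\lam_i-\tfrac{\eps}{2}\,o(\bs{\lam}),\]
the second equality being the identity $\sum_j(\lam^t_j)^2=\sum_i(2i-1)\lam_i$. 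Since $\dim\O_{\eps,\bs{\mu}}=\dim\fing-\dim Z_{\eps,\bs{\mu}}$, the codimension equals $\dim Z_{\eps,\bs{\eta}}-\dim Z_{\eps,\bs{\lam}}$; in particular it does not see the ambient $\dim\fing$, so the two sides of the asserted equality may be compared even though they live in different Lie algebras. I would factor the prescribed erasure as first deleting the $s$ common columns and then the $r$ common rows, proving that each step preserves the codimension, the column step additionally sending $\eps$ to $(-1)^{s}\eps$.

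For the column step, deleting the first $s$ columns replaces $\bs{\lam}^{t}$ by $(\lam^t_{s+1},\lam^t_{s+2},\dots)$, so the square term contributes $\sum_{j>s}\big((\eta^t_j)^2-(\lam^t_j)^2\big)$, which equals the full sum $\sum_{j}\big((\eta^t_j)^2-(\lam^t_j)^2\big)$ because $\eta^t_j=\lam^t_j$ for $j\leq s$. For the odd-part term I would use $o(\bs{\mu})=\sum_{j\geq1}(-1)^{j-1}\mu^t_j$, which applied to the column-erased partition $\bs{\mu}^{(s)}$ gives $(-1)^{s}o(\bs{\mu}^{(s)})=\sum_{j>s}(-1)^{j-1}\mu^t_j$; subtracting the expressions for $\bs{\eta}$ and $\bs{\lam}$ and again using $\eta^t_j=\lam^t_j$ for $j\leq s$ yields $o(\bs{\eta})-o(\bs{\lam})=(-1)^{s}\big(o(\bs{\eta}^{(s)})-o(\bs{\lam}^{(s)})\big)$. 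Together with $\eps'=(-1)^{s}\eps$ the sign $(-1)^s$ is exactly absorbed, so the codimension is unchanged. This computation also explains the sign flip conceptually: erasing one column lowers every part by $1$ and so interchanges odd and even parts, carrying the defining condition of $\P_\eps$ to that of $\P_{-\eps}$; hence $\bs{\eta}^{(s)},\bs{\lam}^{(s)}\in\P_{(-1)^s\eps}$.

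For the row step $\eps$ is unchanged and I would use the row form of the formula. Deleting the first $r$ rows changes $\sum_i(2i-1)\lam_i$ by $\sum_{i\leq r}(2i-1)\lam_i+2r\sum_{i>r}\lam_i$; in the difference of $\bs{\eta}$ and $\bs{\lam}$ the first sum vanishes because $\eta_i=\lam_i$ for $i\leq r$, and the second vanishes because $\sum_{i>r}\eta_i=\sum_{i>r}\lam_i$ (both equal $n-\sum_{i\leq r}\lam_i$), while the odd-part term is preserved since the odd parts among the first $r$ rows agree. That the truncated partition stays in $\P_\eps$ is exactly where the hypothesis $(\lam_1,\dots,\lam_r)\in\P_\eps$ is used: for a value of the constrained parity, its multiplicity in $(\lam_{r+1},\dots)$ is the even multiplicity in $\bs{\lam}$ minus the even multiplicity in $(\lam_1,\dots,\lam_r)$, hence even. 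I would also check that dominance is preserved by both steps---for columns via conjugation, using $\bs{\eta}\leq\bs{\lam}\Leftrightarrow\bs{\lam}^t\leq\bs{\eta}^t$ and cancelling the equal first $s$ dual parts in each partial sum, and for rows by cancelling the equal first $r$ partial sums---and that both reduced partitions have the same size (the erased rows and columns coincide). Composing the two steps, the second performed in type $\eps'$ (its hypotheses holding because column-erasure carries $(\lam_1,\dots,\lam_r)\in\P_\eps$ into $\P_{\eps'}$), gives $\bs{\eta}',\bs{\lam}'\in\P_{\eps'}$ with $\bs{\eta}'\leq\bs{\lam}'$ and the asserted equality of codimensions.

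The main obstacle is the column step: one must get the parity bookkeeping of the odd-part term exactly right, so that the hypothesis on the first $s$ columns cancels precisely the $j\leq s$ boundary terms while the sign $(-1)^s$ matches $\eps'=(-1)^s\eps$. Once this is in place the row step, the dominance check, and the $\P_{\eps'}$-membership are routine. A harmless edge case to dispatch is that some of the first $r$ rows may drop to length $0$ after the columns are erased; this causes no trouble since all the identities above are read off the $0$-padded sequences.
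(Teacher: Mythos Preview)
The paper does not give its own proof of this proposition: it is quoted verbatim from \cite[Proposition~3.2]{KraftProcesi82} and used as a black box. So there is no ``paper's proof'' to compare against, and your task reduces to whether your argument is correct on its own.

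Your approach is sound. The dimension formula $\dim Z_{\eps,\bs{\lam}}=\tfrac12\sum_j(\lam^t_j)^2-\tfrac{\eps}{2}o(\bs{\lam})$ is correct (it is exactly \cite[Corollary~6.1.4]{CMa}), and your two identities $\sum_j(\lam^t_j)^2=\sum_i(2i-1)\lam_i$ and $o(\bs{\mu})=\sum_{j\geq1}(-1)^{j-1}\mu^t_j$ are standard and easy to check. The factorisation into a column step followed by a row step is legitimate: after erasing $s$ columns the first $r$ rows of $\bs{\eta}^{(s)}$ and $\bs{\lam}^{(s)}$ still coincide, and the hypothesis $(\lam_1,\dots,\lam_r)\in\P_\eps$ becomes $(\lam^{(s)}_1,\dots,\lam^{(s)}_r)\in\P_{\eps'}$ because subtracting $s$ from each part flips the parity $s$ times while the parts that drop to zero had, by the $\P_\eps$ constraint, even multiplicity of the relevant parity and hence cause no trouble. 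Your cancellation of the $j\leq s$ terms (column step) and the $i\leq r$ terms plus the tail sum (row step) is clean, and the sign bookkeeping $(-1)^s$ matching $\eps'=(-1)^s\eps$ is exactly right. Dominance is clearly preserved at each step as you say.

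For context, Kraft and Procesi's original argument is of the same combinatorial nature, reducing to the dimension formula; so your route is essentially the intended one, only spelled out in more explicit detail than the source.
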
 

\begin{Def}[{\cite[Definition 3.3]{KraftProcesi82}}]\label{Def:deg-irred}  
In the setting of Proposition \ref{Pro:deg} we say that the
$\eps$-degeneration $\bs{\eta}\leq\bs{\lam}$ {\em is obtained from the 
$\eps'$-degeneration $\bs{\eta}'\leq\bs{\lam}'$ 
by adding rows and columns}.

An $\eps$-degeneration $\bs{\eta}\leq\bs{\lam}$ 
is called {\em irreducible} if it cannot be obtained by adding 
rows and columns in a non trivial way. 
\end{Def}

In the setting of Proposition \ref{Pro:deg}, 
when we obtain an irreducible pair $(\bs{\eta}',\bs{\lam}')$, 
such a pair is called the {\em type of 
$(\O_{\eps,\bs{\eta}},\O_{\eps,\bs{\lam}})$}. 

\begin{Rem}
\begin{enumerate}
\item In the previous setting, 
$\bs{\eta}'\leq\bs{\lam}'$ is minimal if and only if 
$\bs{\eta}\leq\bs{\lam}$ is minimal. 
\item Any $\eps$-degeneration is obtained in a unique way 
from an irreducible $\eps'$-degeneration by adding rows and columns.
\end{enumerate}
\end{Rem}

So for the classification of the minimal $\eps$-degenerations, 
one needs to describe the minimal irreducible $\eps$-degenerations. 
They are given in \cite[Table 3.4]{KraftProcesi82}. 
We reproduce it (see Table \ref{Tab:deg}) since it will be our main tool in Section \ref{sec:conclusions}. 
In the sixth line, ^^ ^^ $\on{codim}$" refers to the codimension 
of $\O_{\eps,\bs{\eta}}$ in $\overline{\O_{\eps,\bs{\lam}}}$. 
The meaning of the last column of the table is 
explained in \cite[\S\S14.2 and 14.3]{KraftProcesi82}. 
For our purpose, what is important is that, 
except for the type $e$, 
the singularity of $\overline{\O_{\eps,\bs{\lam}}}$ 
in $\O_{\eps,\bs{\eta}}$ is normal. 

{\footnotesize\begin{table}[h]
 \begin{center}
\begin{tabular}{|c|c|c|c|c|c|c|} 
\hline &&&&&& \\[-0.75em]
Type & Lie algebra & $\eps$ & $\bs{\lam}$ & $\bs{\eta}$ & 
$\on{codim}$ 
& 
$\Sing(\overline{\O_{\eps,\bs{\lam}}},\O_{\eps,\bs{\eta}})$ \\[0.25em]
\hline
$a$ & $\sp_2$ & -1 & $(2)$ & $(1,1)$ & 2 & $A_1$  \\
$b$ & $\sp_{2n}$, $n>1$ & -1 & $(2n)$ & $(2n-1,2)$ 
& $2$ & $D_{n+1}$  \\
$c$ & $\so_{2n+1}$, $n>0$ & 1 & $(2n+1)$ & $(2n-1,1,1)$ 
& $2$ & $A_{2n-1}$ \\
$d$ & $\sp_{4n+2}$, $n>0$ & -1 & $(2n+1,2n+1)$ 
& $(2n,2n,2)$ 
& $2$ & $A_{2n-1}$  \\
$e$ & $\so_{4n}$, $n>0$ & 1 & $(2n,2n)$ 
& $(2n-1,2n-1,1,1)$ 
& $2$ & $A_{2n-1}\cup A_{2n-1}$ \\
$f$ & $\so_{2n+1}$, $n>1$ & 1 & $(2,2,1^{2n-1})$ 
& $(1^{2n+1})$ 
& $4n-4$ & $b_{n}$  \\
$g$ & $\sp_{2n}$, $n>1$ & -1 & $(2,1^{2n-2})$ 
& $(1^{2n})$ 
& $2n$ & $c_{n}$ \\
$h$ & $\so_{2n}$, $n>2$ & 1 & $(2,2,1^{2n-4})$ 
& $(1^{2n})$ 
& $4n-6$ & $d_{n}$ \\
\hline
\end{tabular}
\end{center}
\vspace{.125cm}
\caption{Irreducible minimal $\eps$-degenerations} 
\label{Tab:deg}
\end{table}}

The main result is the following. 

\begin{Th}[{\cite[Theorem 12.3]{KraftProcesi82}}] \label{Th:deg}
Let $\bs{\eta}\leq\bs{\lam}$ be 
the $\eps$-degeneration obtained from the 
$\eps'$-degeneration $\bs{\eta}'\leq\bs{\lam}'$ 
by adding rows and columns. 
Then $$\Sing(\overline{\O_{\eps,\bs{\lam}}},\O_{\eps,\bs{\eta}})
=\Sing(\overline{\O_{\eps',\bs{\lam}'}},\O_{\eps',\bs{\eta}'}).$$
\end{Th}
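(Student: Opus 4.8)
The plan is to show that the singularity type is invariant under the two basic operations from which ``adding rows and columns'' is assembled---removing a top block of common rows, which leaves $\eps$ unchanged, and removing a left block of common columns, which multiplies $\eps$ by $-1$ for each column removed---and then to compose. Since $\Sing$ is an equivalence relation and, by the Remark following Definition~\ref{Def:deg-irred}, every $\eps$-degeneration is obtained from an irreducible one by adding rows and columns, it suffices to treat a pure-row addition ($s=0$ in Proposition~\ref{Pro:deg}) and a pure-column addition ($r=0$) separately; the sign accumulated after the $s$ column additions is then $(-1)^s\eps=\eps'$, exactly as in Proposition~\ref{Pro:deg}. Throughout I would use the linear-algebra model, realizing $\overline{\O_{\eps,\bs\lam}}$ as the variety of nilpotent skew-adjoint endomorphisms $x$ of a space $(V,\langle\,,\,\rangle)$ with a nondegenerate form of type $\eps$, cut out by the rank conditions $\on{rank}(x^i)\le r_i(\bs\lam)$, so that the strata are the loci of fixed Jordan type. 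To check smooth equivalence I would, in each basic case, build an auxiliary variety $Z$ with two maps to the relevant orbit closures that are smooth at one common point lying over $\O_{\eps,\bs\eta}$ and its primed counterpart, as required by the definition of $\Sing$ in Section~\ref{subsec:nilslo}; the prototype is Kraft and Procesi's formless row-and-column removal for $\sl_n$ \cite{KraftProcesi81}.

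For a pure-row addition the hypothesis of Proposition~\ref{Pro:deg}---that the erased top rows $(\lam_1,\dots,\lam_r)$ themselves form a partition in $\P_\eps$---is precisely what makes the geometry split: it produces an orthogonal decomposition $V=U\perp V'$ in which $U$ carries a nondegenerate form of the \emph{same} type $\eps$ and houses the Jordan blocks of the common top rows, while $V'$ carries a form of type $\eps$ of the reduced size. I would let $Z$ parametrize a nilpotent skew-adjoint $x$ together with such an $x$-adapted isometric splitting; forgetting the splitting maps $Z$ to $\overline{\O_{\eps,\bs\lam}}$, while restricting $x$ to $V'$ maps $Z$ to $\overline{\O_{\eps,\bs\lam'}}$. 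Both maps are fibrations with smooth homogeneous fibres, hence smooth, and they carry $\O_{\eps,\bs\eta}$ to $\O_{\eps,\bs\eta'}$; this settles the row case.

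The column case carries the sign flip and is the technical heart. Here I would use the duality that sends $x$ to the operator it induces on $\on{im}(x)\cong V/\ker(x)$: this restriction has Jordan type obtained by deleting the first column of $\bs\lam$, and the form on $V$ transported through $x$ becomes a nondegenerate form of the \emph{opposite} type, which is exactly the passage from $\eps$ to $-\eps$. Iterating $s$ times accounts for the factor $(-1)^s$. Realizing this duality as a correspondence $Z$ equipped with two smooth maps---one recording $x$ on $V$, the other the induced operator on the quotient---would reduce the column case to the row case on the dual side, once one verifies that the transported form is nondegenerate of type $-\eps$ and that the rank stratification is transported correctly.

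The main obstacle is precisely the construction of these correspondences as genuinely \emph{smooth} maps at the base points, rather than mere stratum-wise bijections, while carrying the nondegenerate form along. The difficulty is sharpest in the column case: the naive assignment $x\mapsto x|_{\on{im}(x)}$ is not even a morphism near $\O_{\eps,\bs\eta}$, because $\on{rank}(x)$ jumps between the larger stratum $\O_{\eps,\bs\lam}$ and the smaller one, so $Z$ must be set up---for instance through the transverse slice together with a vector-bundle-type ``building-up'' of the larger datum from the smaller one---so that smoothness holds in a full neighbourhood of the chosen point. One must also check nondegeneracy and the correct type of the induced form, and, when both operations are performed, keep the order of the row and column steps consistent. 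Once these smoothness verifications are in place, transitivity of smooth equivalence lets me compose the basic steps and conclude $\Sing(\overline{\O_{\eps,\bs\lam}},\O_{\eps,\bs\eta})=\Sing(\overline{\O_{\eps',\bs\lam'}},\O_{\eps',\bs\eta'})$.
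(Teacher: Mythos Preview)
The paper does not prove this theorem. Theorem~\ref{Th:deg} is quoted verbatim from \cite[Theorem~12.3]{KraftProcesi82} as part of the background material in Section~\ref{sec:normal}, and no proof is given or attempted in the paper; it is used purely as a tool in Section~\ref{sec:conclusions}. So there is no ``paper's own proof'' against which to compare your proposal.

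That said, your sketch does track the architecture of Kraft--Procesi's original argument: the factorisation into pure row-removal and pure column-removal, the orthogonal splitting $V=U\perp V'$ for the row step, and the passage to the induced endomorphism on $\on{im}(x)$ (with the accompanying flip of form type) for the column step. You have also correctly located where the genuine content lies---building the auxiliary variety $Z$ so that both projections are \emph{smooth} at the chosen base point, not merely set-theoretic---and you are right that the naive map $x\mapsto x|_{\on{im}(x)}$ is not a morphism across a rank jump. What your proposal does not do is carry out that construction: in \cite{KraftProcesi82} the column step is realised through an explicit incidence variety and the smoothness is verified by a nontrivial local-triviality and dimension argument. As written, your text is an accurate high-level plan rather than a proof; the ``main obstacle'' you flag is exactly the substance of the cited theorem.
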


Since very even nilpotent orbits in $\so_{2n}$ do not appear as 
known associated varieties of vertex algebras, 
we do not give here the results about the normality 
question in this case: 
the question was partially answered in 
\cite[Theorem 17.3]{KraftProcesi82}, the remaining 
cases were dealt with in \cite{Sommers05}. 

\section{Irreducibility of nilpotent Slodowy slices 
as associated varieties} \label{sec:conclusions}
Using the results of the previous sections we are now in position 
to check the following: {\em for any nilpotent 
orbit $\O$ of $\fing$ appearing in Table \ref{Tab:known-cases}, 
and for any $f \in \overline{\O}$, $\Slo_{\O,f}$ is irreducible.}

We prove the statement by cases in Table \ref{Tab:known-cases}. 

$\ast$ Case (1). 
We have $\O=\mc{N}$. Here the result is well-known 
(or results from Lemma \ref{Lem:normal}). 

$\ast$ Cases (2). 
First we assume that $\fing$ is of exceptional type. 
Then we have to check that none of the nilpotent 
orbits appearing in Tables 4--10 of \cite{Ara09b} appears in Table \ref{Tab:branching}. 
We readily verify that it is true and so we are done. 
Actually, none of these nilpotent 
orbits (except the nilpotent orbit $\tilde{A}_1$ in 
$G_2$) even appears in Table~\ref{Tab:non-normal}. 

For the classical types, we have more work to do. 
The result will follow from~\S\ref{subsec:admissible}. 

$\ast$ Cases (3), (4) or (5). 
Here $\O=\O_{min}$ and so the result is clear since either $f \in \O_{min}$ 
or $f=0$. 

$\ast$ Cases (6). 
Here $\O=\mathbb{O}_{(2^{r-2},1^4)}$. 
According to Theorem \ref{Th:normal-partition} (1), 
$\overline{\O}$ is normal and so it is unibranch by Theorem \ref{Th:normal-codim}~(1). 
Hence for any $f \in \overline{ \mathbb{O}_{(2^{r-2},1^4)}}$, 
$\Slo_{\O,f}$ is irreducible.

\subsection{Admissible cases in the classical types}\label{subsec:admissible}
In this subsection, we give the necessary data 
to verify that all nilpotent orbits of Tables 2--3 
of \cite{Ara09b}\footnote{Not every nilpotent orbits 
of these tables appear as associated variety 
of admissible vertex algebras. Namely, in Table 2, the case 
where $q$ is even in $\so_{2r+1}$ does not appear in such a way 
so we do not consider that case.\label{footnote:adm} }
have normal closures. The case of $\sl_{n}$ 
is clear, so we only consider the cases where $\g=\sp_{2r}$,  
$\g=\so_{2r+1}$ and $\g=\so_{2r}$. 

First of all, we observe that no very even nilpotent orbits 
in $\mf{so}_{2r}$ appear in these tables. 
Then results of Section \ref{sec:normal} apply 
and our strategy is the following. 

Let $\bs{\lam}\in \P_{\eps}(n)$ 
be {\em anyone} (as in the footnote \ref{footnote:adm}) 
of the partitions appearing in Tables 2--3 of \cite{Ara09b}. 
Then consider all minimal $\eps$-degenerations $\bs{\eta}$ 
of $\bs{\lam}$ such that 
$$\on{codim}(\O_{\eps,\bs{\eta}},\overline{\O_{\eps,\bs{\lam}}})=2.$$  
It may happen that there are several such a $\bs{\eta}$ for a given 
$\bs{\lam}$, or that there is no such a $\bs{\eta}$ (in this case, 
$\overline{\O_{\eps,\bs{\lam}}}$ does not contain any nilpotent 
orbit in codimension 2, and so it is normal).  

Then the type of the singularity of 
$\overline{\O_{\eps,\bs{\lam}}}$ in  $\O_{\eps,\bs{\eta}}$ 
is obtained following the receipt of Proposition~\ref{Pro:deg}: 
we erase common rows and common columns in  
 $\bs{\eta}$ and $\bs{\lam}$ in order to get 
an irreducible pair
$(\bs{\lam}',\bs{\eta}')$ and we set $\eps'=(-1)^s$, 
where $s$ is the number of common columns. 
The type of 
$(\O_{\eps',\bs{\lam}'},\O_{\eps',\bs{\eta}'})$ is described in Table \ref{Tab:deg} 
and we conclude thanks to Theorem \ref{Th:deg}. 

According to Theorem \ref{Th:normal-partition}, 
we can assume that $\lam_1 \geq 3$ ($\lam_1$ will be $q$  
is our tables). 

For each $\bs{\lam}$ as in Tables 1,2,3 
of \cite{Ara09b} (as in the footnote \ref{footnote:adm}), 
we present in the Tables \ref{Tab:data-sp}, \ref{Tab:data-so-1} 
and \ref{Tab:data-so-2} 
all possible $\bs{\eta}$ as above, the number $\eps'$, 
the corresponding irreducible pair  
$(\bs{\lam}',\bs{\eta}')$, 
and the type of $(\O_{\eps',\bs{\lam}'},\O_{\eps',\bs{\eta}'})$ 
following Table~\ref{Tab:deg}. 

\subsubsection{Case $\g=\sp_{2r}$} 
Here $\eps=-1$. 
According to \cite[Tables 2-3]{Ara09b}, 
the different possibilities for $\bs{\lam}\in\P_{-1}(2r)$ are the following: 
\begin{description}
\item[I] 
$\bs{\lam}=({q,\ldots,q},s)$, $0 \leq  
s \leq q-1$, $q$ odd, $s$ even. 
\item[II] 
$\bs{\lam}=({q,\ldots,q},q-1,s)$, 
$0 \leq  
s \leq q-1$, $q$ odd, $s$ even. 
\item[III] 
$\bs{\lam}=(q,\ldots,q,s)$, 
$0 \leq s\leq q-1$, $q$ even, $s$ even. 
\item[IV]  
$\bs{\lam}=(q+1,{q,\ldots,q},s)$, 
$0 \leq s\leq q-1$, $q$ odd, $s$ even. 
\item[V]  
$\bs{\lam}=(q+1,{q,\ldots,q},
q-1,s)$, 
$2\leq s\leq q-1$, $q$ odd, $s$ even. 
\end{description}

{\footnotesize\begin{table}[h]
 \begin{center}
\begin{tabular}{|c|c|c|c|c|c|c|} 
\hline
$\bs{\lam}$ & $q,s$ & $\bs{\eta}$ & 
$\eps'$ & $\bs{\lam}'$ & $\bs{\eta}'$ & Type \\
\hline
\bf{I}
& $0 \leq s\leq q-3$
& $(q,\ldots,q,q-1,q-1,s+2)$
& $-1$ 
& $(q-s,q-s)$ 
& $(q-s-1,q-s-1,2)$ 
& $d$ \\
& $4 \leq s \leq q-1$ 
& $(q,\ldots,q,s-2,2)$  
& $-1$ 
& $(s)$ 
& $(s-2,2)$ 
& $b$ \\
& $s=2$ 
&  $(q,\ldots,q,1,1)$ 
& $-1$ 
& $(2)$ 
& $(1,1)$ 
& $a$ \\
\bf{II} 
& $0 \leq s\leq q-5$ 
&  $(q,\ldots,q,q-3,s+2)$ 
& $-1$ 
& $(q-1-s)$ 
& $(q-3-s,2)$ 
& $b$ \\
& $s=q-3$ 
& $(q,\ldots,q,q-2,s+1)$  
& $-1$ 
& $(2)$ 
& $(1,1)$ 
& $a$ \\
& $4 \leq s\leq q-1$
& $(q,\ldots,q,q-1,s-2,2)$ 
& $-1$ 
& $(s)$ 
& $(s-2,2)$ 
& $b$ \\
& $s=2$ 
&  $(q,\ldots,q,q-1,1,1)$ 
& $-1$ 
& $(2)$ 
& $(1,1)$ 
& $a$ \\ 
\bf{III} 
& $0 \leq s \leq q-2$ 
&  $(q,\ldots,q,q-2,s+2)$ 
& $-1$ 
& $(q-s)$ 
& $(q-s-2,2)$ 
& $b$ \\
& $s=q-2$ 
&  $(q,\ldots,q,q-1,s+1)$ 
& $-1$ 
& $(2)$ 
& $(1,1)$ 
& $a$ \\
& $4 \leq s\leq q-1$
& $(q,\ldots,q,s-2,2)$ 
& $-1$ 
& $(s)$ 
& $(s-2,2)$ 
& $b$ \\
& $s=2$ 
&  $(q,\ldots,q,1,1)$ 
& $-1$ 
& $(2)$ 
& $(1,1)$ 
& $a$ \\
\bf{IV}
& $0 \leq s\leq q-3$
& $(q+1,q,\ldots,q,q-1,q-1,s+2)$
& $-1$ 
& $(q-s,q-s)$ 
& $(q-s-1,q-s-1,2)$ 
& $d$ \\
& $4 \leq s \leq q-1$ 
& $(q+1,q,\ldots,q,s-2,2)$  
& $-1$ 
& $(s)$ 
& $(s-2,2)$ 
& $b$ \\
& $s=2$ 
&  $(q+1,q,\ldots,q,1,1)$ 
& $-1$ 
& $(2)$ 
& $(1,1)$ 
& $a$ \\
\bf{V} 
& $0 \leq s\leq q-5$ 
&  $(q+1,q,\ldots,q,q-3,s+2)$ 
& $-1$ 
& $(q-1-s)$ 
& $(q-3-s,2)$ 
& $b$ \\
& $s=q-3$ 
& $(q+1,q,\ldots,q,q-2,s+1)$  
& $-1$ 
& $(2)$ 
& $(1,1)$ 
& $a$ \\
& $4 \leq s\leq q-1$
& $(q+1,q,\ldots,q,q-1,s-2,2)$ 
& $-1$ 
& $(s)$ 
& $(s-2,2)$ 
& $b$ \\
& $s=2$ 
&  $(q+1,q,\ldots,q,q-1,1,1)$ 
& $-1$ 
& $(2)$ 
& $(1,1)$ 
& $a$ \\ 
\hline
\end{tabular}
\end{center}
\vspace{.125cm}
\caption{Data for $\g=\sp_{2r}$} 
\label{Tab:data-sp}
\end{table}}

\subsubsection{Case $\g=\so_{2r+1}$}
Here $\eps=1$. 
According to \cite[Tables 2-3]{Ara09b} 
and the footnote~\ref{footnote:adm}, 
the different possibilities for 
$\bs{\lam}\in\P_1(2r+1)$ are the following: 
\begin{description}
\item[I] 
$\bs{\lam}=({q,\ldots,q},s)$, 
$0 \leq s\leq q$, $q$ odd with even multiplicity, $s$ odd.  
\item[II] 
$\bs{\lam}=({q,\ldots,q},s,1)$, 
$0 \leq s\leq q-1$, $q$ odd with odd multiplicity, $s$ odd.  
\item[III] 
$\bs{\lam}=({q,\ldots,q},s)$, 
$0 \leq s\leq q-1$, $q$ even, $s$ odd.  
\item[IV] 
$\bs{\lam}=({q,\ldots,q},q-1,s,1)$, 
$0 \leq s\leq q-1$, $q$ even, $s$ odd.  
\end{description}

{\footnotesize\begin{table}[h]
 \begin{center}
\begin{tabular}{|c|c|c|c|c|c|c|} 
\hline
$\bs{\lam}$ & $q,s$ & $\bs{\eta}$ & 
$\eps'$ & $\bs{\lam}'$ & $\bs{\eta}'$ & Type \\
\hline
\bf{I}
& $0 \leq s\leq q-4$
& $(q,\ldots,q,q-2,s+2)$
& $-1$ 
& $(q-s)$ 
& $(q-s-2,2)$ 
& $b$ \\
& $3 \leq s \leq q$ 
& $(q,\ldots,q,s-2,1,1)$  
& $1$ 
& $(s)$ 
& $(s-2,1,1)$ 
& $c$ \\
& $s=q-2$ 
&  $(q,\ldots,q,q-1,q-1)$ 
& $-1$ 
& $(2)$ 
& $(1,1)$ 
& $a$ \\
\bf{II} 
& $0 \leq s\leq q-4$
& $(q,\ldots,q,q-2,s+2,1)$
& $-1$ 
& $(q-s)$ 
& $(q-s-2,2)$ 
& $b$ \\
& $5 \leq s \leq q-1$ 
& $(q,\ldots,q,s-2,3)$  
& $-1$ 
& $(s-1)$ 
& $(s-3,2)$ 
& $b$ \\
& $s=3$ 
&  $(q,\ldots,q,2,2)$ 
& $-1$ 
& $(2)$ 
& $(1,1)$ 
& $a$ \\
\bf{III}
& $0 \leq s\leq q-3$
& $(q,\ldots,q,q-1,q-1,s+2)$
& $-1$ 
& $(q-s,q-s)$ 
& $(q-1-s,q-s-1,2)$ 
& $d$ \\
& $3 \leq s \leq q$ 
& $(q,\ldots,q,s-2,1,1)$  
& $1$ 
& $(s)$ 
& $(s-2,1,1)$  
& $c$ \\ 
\bf{IV} 
& $1 \leq s\leq q-5$ 
&  $(q,\ldots,q,q-3,s+2,1)$ 
& $-1$ 
& $(q-1-s)$ 
& $(q-3-s,2)$ 
& $b$ \\
& $s=q-3$ 
& $(q,\ldots,q,q-2,s+1,1)$  
& $-1$ 
& $(2)$ 
& $(1,1)$ 
& $a$ \\
& $5 \leq s\leq q-1$
& $(q,\ldots,q,q-1,s-2,3)$ 
& $-1$ 
& $(s-1)$ 
& $(s-3,2)$ 
& $b$ \\
& $s=3$
& $(q,\ldots,q,q-1,2,2)$ 
& $-1$ 
& $(2)$ 
& $(1,1)$ 
& $a$ \\
\hline
\end{tabular}
\end{center}
\vspace{.125cm}
\caption{Data for $\g=\so_{2r+1}$} 
\label{Tab:data-so-1}
\end{table}}

\subsubsection{Case $\g=\so_{2r}$}
Here $\eps=1$. 
According to \cite[Table 2]{Ara09b}, 
the different possibilities for $\bs{\lam}\in\P_1(2r)$ are the following: 
\begin{description}
\item[I] 
$\bs{\lam}=({q,\ldots,q},s)$, 
$0\leq s\leq q$, $q$ odd with odd multiplicity, $s$ odd.  
\item[II] 
$\bs{\lam}=({q,\ldots,q},s,1)$, 
$0\leq s\leq q-1$, $q$ odd with even multiplicity, $s$ odd.  
\item[III] 
$\bs{\lam}=(q+1,{q,\ldots,q},s)$, 
$0\leq s\leq q-1$, $q$ even, $s$ odd. 
\item[IV] 
$\bs{\lam}=(q+1,{q,\ldots,q},q-1,s,1)$, 
$0\leq s\leq q-1$, $q$ even, $s$ odd. 
\end{description}

{\footnotesize\begin{table}[h]
 \begin{center}
\begin{tabular}{|c|c|c|c|c|c|c|} 
\hline
$\bs{\lam}$ & $q,s$ & $\bs{\eta}$ & 
$\eps'$ & $\bs{\lam}'$ & $\bs{\eta}'$ & Type \\
\hline
\bf{I}
& $0 \leq s\leq q-4$
& $(q,\ldots,q,q-2,s+2)$
& $-1$ 
& $(q-s)$ 
& $(q-s-2,2)$ 
& $b$ \\
& $3 \leq s \leq q$ 
& $(q,\ldots,q,s-2,1,1)$  
& $1$ 
& $(s)$ 
& $(s-2,1,1)$ 
& $c$ \\
& $s=q-2$ 
&  $(q,\ldots,q,q-1,s+1)$ 
& $-1$ 
& $(2)$ 
& $(1,1)$ 
& $a$ \\
\bf{II} 
& $0 \leq s\leq q-4$
& $(q,\ldots,q,q-2,s+2,1)$
& $-1$ 
& $(q-s)$ 
& $(q-s-2,2)$ 
& $b$ \\
& $5 \leq s \leq q-1$ 
& $(q,\ldots,q,s-2,3)$  
& $-1$ 
& $(s-1)$ 
& $(s-3,2)$ 
& $b$ \\
& $s=3$ 
&  $(q,\ldots,q,2,2)$ 
& $-1$ 
& $(2)$ 
& $(1,1)$ 
& $a$ \\
\bf{III} 
& $3 \leq s \leq q-1$  
&  $(q+1,q,\ldots,q,s-2,1,1)$ 
& $1$ 
& $(s)$ 
& $(s-2,1,1)$ 
& $c$ \\
& $0 \leq s \leq q-3$ 
&  $(q+1,q,\ldots,q,q-1,q-1,s+2)$ 
& $-1$ 
& $(q-s,q-s)$ 
& $(q-1-s,q-1-s,2)$ 
& $d$ \\
& $s=2$ 
&  $(q+1,q,\ldots,q,1,1)$ 
& $-1$ 
& $(2)$ 
& $(1,1)$ 
& $a$ \\
\bf{IV}
& $0 \leq s\leq q-5$
& $(q+1,q,\ldots,q,q-3,s+2,1)$
& $-1$ 
& $(q-1-s)$ 
& $(q-3-s,2)$ 
& $b$ \\
& $s=q-3$ 
& $(q+1,q,\ldots,q,q-2,s+1,1)$  
& $-1$ 
& $(2)$ 
& $(1,1)$ 
& $a$ \\
& $5 \leq s\leq q-1$
& $(q+1,q,\ldots,q,q-1,s-2,3)$
& $-1$ 
& $(s-1)$ 
& $(s-3,2)$ 
& $b$ \\
& $s=3$
& $(q+1,q,\ldots,q,q-1,2,2)$
& $-1$ 
& $(2)$ 
& $(1,1)$ 
& $a$ \\
\hline
\end{tabular}
\end{center}
\vspace{.125cm}
\caption{Data for $\g=\so_{2r}$} 
\label{Tab:data-so-2}
\end{table}}

\begin{Rem}
The above verifications show that, except for the non-normal 
nilpotent orbit $\tilde{A}_1$ of dimension 8 in $G_2$ that appears as 
the associated variety of simple affine vertex algebras of
type $G_2$ at admissible levels with denominator $2$ \cite{Ara09b}, 
all the other nilpotent orbits appearing in Table \ref{Tab:known-cases} 
are actually normal. 
\end{Rem}

\section{Proof of Conjecture 2 of \cite{AM16}}\label{sec:proof-of-AM-conjecuture}
In this section we prove the following assertion.
\begin{Th}[{Conjecture 2 of \cite{AM16}}]\label{Th:AMconj}
Let $\fing=\mf{so}_{2r}$ with $r$ even, $r \geq 4$, 
and let $k=2-r$.
Then 
$$X_{V_k(\fing)}=\overline{\mathbb{O}_{(2^{r-2},1^4)}}.$$
\end{Th}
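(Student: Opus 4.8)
The plan is to pin down $X_{V_k(\fing)}$ by matching an upper and a lower bound, after recording the geometry of the target. Identify $\fing^{*}$ with $\fing=\mf{so}_{2r}$ through $(\,|\,)$ and realise $\fing$ as the skew endomorphisms of $V=\C^{2r}$. Then, set-theoretically, $\overline{\mathbb{O}_{(2^{r-2},1^4)}}=\{X\in\fing:\ X^2=0,\ \on{rank}X\le r-2\}$, of dimension $r^2-r-2$; since $\lambda_1+\lambda_2=4$ and $(2^{r-2},1^4)$ is not very even, Theorem~\ref{Th:normal-partition}(1) shows this closure is normal, hence irreducible. Recall that $X_{V_k(\fing)}$ is a $G$-invariant, conic, Poisson subvariety of $\fing$, so once it is known to lie in $\mc{N}$ it is a finite union of nilpotent orbit closures. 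For $r$ even the admissible partitions of $2r$ with all parts $\le 2$ are the $(2^{a},1^{2r-2a})$ with $a$ even; the only ones of matrix rank $>r-2$ are the two very even orbits $\mathbb{O}_{(2^{r})}^{I},\mathbb{O}_{(2^{r})}^{II}$, both of dimension $r^2-r$.

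For the upper bound I would first produce the degree-two singular vector responsible for the quadric. The module $S^2(\fing)$ contains a copy of $V(2\omega_1)$, the traceless symmetric square of the vector representation, realised by the components of $X\mapsto X^2$ modulo trace. An $\affg$-singular vector of weight $2\omega_1$ sitting in conformal weight $2$ of $V^k(\fing)$ occurs exactly at the resonance
\[
k+h^\vee=\frac{(2\omega_1,\,2\omega_1+2\rho)}{2\cdot 2}=\frac{4(\omega_1,\omega_1)+4(\omega_1,\rho)}{4}=1+(r-1)=r ,
\]
that is, at $k=r-h^\vee=2-r$. As this vector lies in the maximal ideal of $V^k(\fing)$, the whole $G$-submodule generated by its symbol vanishes on $X_{V_k(\fing)}$; geometrically, the traceless part of $X^2$ vanishes there, so $X_{V_k(\fing)}\subseteq Z:=\{X:\ X^2\in\C\,\mathrm{Id}\}$.

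Now $Z$ is the union of the nilpotent quadric $\{X^2=0\}=\overline{\mathbb{O}_{(2^{r})}^{I}}\cup\overline{\mathbb{O}_{(2^{r})}^{II}}$ with a semisimple locus $\{X^2=c\,\mathrm{Id}:\ c\ne0\}$, whose orbits again have dimension $r^2-r$. The decisive step is therefore the dimension equality $\dim X_{V_k(\fing)}=r^2-r-2$. Granting the upper estimate $\dim X_{V_k(\fing)}\le r^2-r-2$, no $G$-orbit of dimension $r^2-r$ can occur, so no component of $X_{V_k(\fing)}$ meets the semisimple locus nor the open very even orbits; hence $X_{V_k(\fing)}\subseteq\{X^2=0\}\subseteq\mc{N}$ and then $X_{V_k(\fing)}\subseteq\overline{\mathbb{O}_{(2^{r-2},1^4)}}$. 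For the matching lower estimate $\dim X_{V_k(\fing)}\ge r^2-r-2$ I would show $H^{0}_{DS,f}(V_k(\fing))\ne0$ for $f\in\mathbb{O}_{(2^{r-2},1^4)}$ and invoke Theorem~\ref{Th:W-algebra-variety}, forcing $\overline{\mathbb{O}_{(2^{r-2},1^4)}}\subseteq X_{V_k(\fing)}$; combined with the upper bound this gives equality, and normality of the target (Theorem~\ref{Th:normal-partition}) confirms it is the irreducible answer.

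The main obstacle is exactly this dimension computation, equivalently the exclusion of the very even orbits. The resonant degree-two vector is essentially forced and only detects the quadric $\{X^2=0\}$, which for $r$ even is reducible and strictly larger than the target; separating $\overline{\mathbb{O}_{(2^{r-2},1^4)}}$ from the two top very even orbits and from the semisimple sheet of $Z$ requires genuinely finer input. I expect this to come either from the asymptotic (Gelfand--Kirillov) dimension of $V_{2-r}(\mf{so}_{2r})$, or from controlling a second, Pfaffian-type family of singular vectors of conformal weight $r/2$ whose symbols are the size-$r$ sub-Pfaffians cutting the rank down to $\le r-2$; the fact that the degree of these grows with $r$ is precisely what makes the estimate delicate. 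A free-field (Howe dual pair) realisation of $V_{2-r}(\mf{so}_{2r})$, if available, would be the most efficient route, since a moment-map description would supply the lower bound and the rank collapse simultaneously.
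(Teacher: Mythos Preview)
Your plan has the right architecture (upper bound plus lower bound via $H^{0}_{DS,f}(V_k(\fing))\ne 0$ and Theorem~\ref{Th:W-algebra-variety}), but there is a genuine gap on each side.

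\textbf{Upper bound.} You have correctly diagnosed that the resonant degree-two singular vector only cuts $X_{V_k(\fing)}$ down to $\{X^2=0\}$, which for even $r$ still contains the two very even orbits $\mathbb{O}_{(2^r)}^{I},\mathbb{O}_{(2^r)}^{II}$ of dimension $r^2-r$, strictly above the target. Your proposed remedies (Pfaffian-type vectors of conformal weight $r/2$, GK-dimension, a free-field model) are plausible directions, but none is carried out, so as it stands the upper bound is not established. The paper does not attempt any of this here: it simply cites \cite[Theorem~1.3]{AM16}, where the inclusion $X_{V_k(\fing)}\subset \overline{\mathbb{O}_{(2^{r-2},1^4)}}$ was already proved. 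That earlier argument is exactly what supplies the ``finer input'' you are looking for; redoing it from scratch with only the quadric relation cannot succeed.

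\textbf{Lower bound.} Here you only state the intention ``I would show $H^0_{DS,f}(V_k(\fing))\ne 0$''. In the paper this is the entire content of the proof, and it is not automatic. The argument runs as follows: the grading by $h=\varpi_{r-2}^\vee$ is short on half-integers, $\fing=\fing_{-1}\oplus\fing_{-1/2}\oplus\fing_0\oplus\fing_{1/2}\oplus\fing_1$; one introduces the twisted degree $D_h=D+\tfrac12 h$ and the finite-dimensional subalgebra $\mf{a}\subset\affg$ generated by $x_\alpha\otimes t$ ($\alpha\in\Delta_{-1}$), $x_\beta$ ($\beta\in\Delta_0$), $x_\gamma\otimes t^{-1}$ ($\gamma\in\Delta_1$), so that $\mf{a}\cong \fing_{even}\cong \mf{so}_{2(r-2)}\oplus\mf{sl}_2\oplus\mf{sl}_2$ acts on each $V_k(\fing)_{[d]}$. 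Jantzen's simplicity criterion shows that $V_k(\fing)_{[0]}$ is the parabolic Verma $U(\mf{a})\otimes_{U(\mf{p})}\C|0\rangle$, hence free over $U(\mf{m}_-)$; then $H^0_{DS,f}(V_k(\fing))_{[0]}\cong H_0(\mf{m}_-,V_k(\fing)_{[0]}\otimes\C_\chi)\cong\C\ne 0$. This is the step your proposal is missing.

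In summary, your outline matches the paper's for the lower bound but omits the actual computation, and for the upper bound you are trying to reprove a result that the paper takes as input from \cite{AM16}; the degree-two vector alone cannot separate $\overline{\mathbb{O}_{(2^{r-2},1^4)}}$ from the very even orbits.
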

\begin{proof}
Let $\fing=\mf{so}_{2r}$ with $r$ even, $r\geq 4$.
Let $\Delta=\{\pm \varepsilon_i\pm  \varepsilon_j   
\; |\; 1\leq i,j\leq {r }, i\not= j\}$ be the root system of $\g$ 
and take $\Delta_+=\{\varepsilon_i\pm  \varepsilon_j 
\; |\; 1\leq i < j\leq {r }\}$ for the set of positive roots. 

By Theorem 1.3 of \cite{AM16}
we know that 
$X_{V_k(\fing)}\subset \overline{\mathbb{O}_{(2^{r-2},1^4)}}$.
Hence it is sufficient to show that 
$X_{V_k(\fing)}\supset \overline{\mathbb{O}_{(2^{r-2},1^4)}}$.
By Theorem \ref{Th:W-algebra-variety} (1),
this is equivalent to that
$$H_{DS,f}^0(V_k(\fing))\ne 0\quad \text{ for } 
\quad f\in \mathbb{O}_{(2^{r-2},1^4)}.$$

Let $f\in  \mathbb{O}_{(2^{r-2},1^4)}$,
and let $(e,f,h)$ be  an $\mf{sl}_2$-triple in $\fing$.
The weighted Dynkin diagram of $f$ is the following.
\vspace{-0.5cm}
$$\begin{Dynkin}
\Dspace\Dspace\Dspace\Dspace\Dspace\Dbloc{\Dcirc\Dsouth\Dtext{r}{0}}
\Dskip
\Dbloc{\Dcirc\Deast\Dtext{b}{0}}
\Dbloc{\Dcirc\Dwest\Deast\Dtext{b}{0}}
\Dbloc{\Dcirc\Dwest\Deast\Dtext{b}{0}}
\Dbloc{\Ddots}
\Dbloc{\Dcirc\Dwest\Deast\Dtext{b}{0}}
\Dbloc{\Dcirc\Dwest\Deast\Dnorth\Dtext{b}{1}}
\Dbloc{\Dcirc\Dwest\Dtext{b}{0}}
\end{Dynkin}$$
    Thus,  we may assume that $h=\varpi_{2r-2}^\vee$,
    where $\varpi_i^\vee$ is the $i$-th fundamental coweight of $\fing$.
    We have
    \begin{align}
\fing=\fing_{-1}\+\fing_{-1/2}\+ \fing_{0}\+\fing_{1/2}\+\fing_{1},
\label{eq:almost-short}
\end{align}
where $\fing_j=\{x\in \fing\mid [h,x]=2jx\}$.
Set $\Delta_j=\{\alpha\in \Delta\mid x_{\alpha}\in \fing_j\}$,
so that $\Delta=\bigsqcup_j \Delta_j$.
    
Set
\begin{align*}
 D_h=D+\frac{1}{2}h,
\end{align*}
and put
$V_k(\fing)_{[d]}=\{v\in V_k(\fing)\mid D_h v=d v\}$.
Since 
$\alpha(D_h)\geq 0$ for any positive real root $\alpha$ of $\affg$ 
by \eqref{eq:almost-short},
we have
\begin{align*}
V_k(\fing)=\bigoplus_{d\in \frac{1}{2}\Z_{\geq 0}}V_k(\fing)_{[d]}.
\end{align*}

The operator  $D_h$ extends to the
grading operator of $\W^k(\fing,f)$ 
as it commutes with the differential of the complex associated 
with the Drinfeld-Sokolov reduction
(\cite{KacRoaWak03,Ara05}).
Thus,
\begin{align*}
H_{DS,f}^0(V_k(\fing))=\bigoplus_{d\in \frac{1}{2}\Z_{\geq 0}}H_{DS,f}^0(V_k(\fing))_{[d]},
\end{align*}
where 
$H_{DS,f}^0(V_k(\fing))_{[d]}=\{c\in H_{DS,f}^0(V_k(\fing))\mid D_h c=d c\}$.

Now, let
$\mf{a}$ be the sualgebra of $\affg$
generated by $x_{\alpha}\otimes t$,  $\alpha\in \Delta_{-1}$,
$x_{\beta}$,  $\beta\in \Delta_{0}$,
$x_{\gamma} \otimes t^{-1}$,  $\gamma\in \Delta_{1}$.
Then $\mf{a}$ is isomorphic to 
\begin{align*}
\fing_{even}:=\fing_{-1}\+\fing_0\+\fing_{1}\subset \fing,
\end{align*}
and $\mf{a}$ acts on the each homogeneous subspace 
$V_k(\fing)_{[d]}$ as its elements commute with $D_h$.
We have  $$\fing_{even}\cong \mf{so}_{2(r-2)}\+ \mf{sl}_2\+\mf{sl}_2,$$
where $\mf{so}_{2(r-2)}$ is the subalgebra of $\fing$ corresponding to the roots
$\pm \epsilon_i\pm \epsilon_j$ with $1\leq i< j\leq r-2$,
and $ \mf{sl}_2\+\mf{sl}_2$ is the subalgebra corresponding to the roots 
$\pm \epsilon_{r-1}\pm \epsilon_r$.

Let $\mf{p}=\mf{l}\+\mf{m}$ be the parabolic subalgebra of $\mf{a}$ 
whose nilradical $\mf{m}$ is
$\{x_{\alpha}\otimes t\mid \alpha\in \Delta_{-1}\}$.
The Levi subalgebra $\mf{l}$ is isomorphic to
 $$\fing_0\cong \mf{gl}_{r-2}\+ \mf{sl}_2\+\mf{sl}_2.$$

The $\mf{a}$-module $V_k(\fing)_{[0]}$
is a 
 highest weight representation of 
$\mf{a}$ with highest weight $((2-r)\varpi_{2(r-2)},0,0)$
under the isomorphism $\mf{a}\cong \mf{so}_{2(r-2)}\+ \mf{sl}_2\+\mf{sl}_2$.
This weight verifies the conditions of the Jantzen's simplicity criterion
\cite{Jan77} (see also \cite[Section 9.13]{Humphreys}). Thus from 
Jantzen's simplicity criterion 
one finds that
\begin{align}\label{eq:Jantzen}
V_k(\fing)_{[0]}\cong U(\mf{a})\*_{U(\mf{p})}\C |0\ket.
\end{align}

The nilpotent element $f\otimes 
t\in \mf{m}\subset \mf{a}$ belongs to $ \mf{so}_{2(r-2)}$
under the isomorphism $\mf{a}\cong \mf{so}_{2(r-2)}\+ \mf{sl}_2\+\mf{sl}_2$,
and is a very even nilpotent element that corresponds to the partition $(2^{r-2})$.
The weighted Dynkin diagram of $f \otimes t$ as nilpotent 
element of $\so_{2(r-2)}$ is given by
\vspace{-0.5cm}
$$\begin{Dynkin}
\Dspace\Dspace\Dspace\Dspace\Dspace\Dbloc{\Dcirc\Dsouth\Dtext{r}{0}}
\Dskip
\Dbloc{\Dcirc\Deast\Dtext{b}{0}}
\Dbloc{\Dcirc\Dwest\Deast\Dtext{b}{0}}
\Dbloc{\Dcirc\Dwest\Deast\Dtext{b}{0}}
\Dbloc{\Ddots}
\Dbloc{\Dcirc\Dwest\Deast\Dtext{b}{0}}
\Dbloc{\Dcirc\Dwest\Deast\Dnorth\Dtext{b}{0}}
\Dbloc{\Dcirc\Dwest\Dtext{b}{2}}
\end{Dynkin}$$

From the definition of the Drinfeld-Sokolov reduction we find that
$$H_{DS,f}^0(V_k(\fing))_{[0]}\cong H_0(\mf{m}_-, V_k(\fing)_{[0]}\* \C_{\chi}),$$
where 
$\mf{m}_-$ is the opposite algebra of $\mf{m}$ 
spanned by $x_{\alpha}\otimes t^{-1}$,  $\alpha\in \Delta_1$,
so that 
$\mf{a}=\mf{m}_-\+\mf{p}$,
and $\C_{\chi}$ is the one-dimensional representation of 
$\mf{m}_-$ defined  by the character
$x\mapsto (x|f\otimes t)$.
Since $V_k(\fing)_{[0]}$ is a free $U(\mf{m}_-)$-module by 
\eqref{eq:Jantzen},
it follows that
$ H_0(\mf{m}_-, V_k(\fing)_{[0]}\* \C_{\chi})\cong  H_0(\mf{m}_-, V_k(\fing)_{[0]})\cong \C$,
and therefore $H_{DS,f}^0(V_k(\fing))_{[0]}$  is nonzero.
Thus, $H_{DS,f}^0(V_k(\fing))$ is nonzero, and this completes the proof.
\end{proof}

Let $f\in \mathbb{O}_{(2^{r-2},1^4)}$.
By Theorem \ref{Th:W-algebra-variety} and Theorem \ref{Th:AMconj},
$\W_{2-r}(\mf{so}_{2r},f)$, for 
$r$ even, is lisse.
The central charge of  $\W_{k}(\mf{so}_{2r},f)$
is given by
$$-\frac{(k+r-2) \left(3 k r-6 k+2 r^2-12 r+10\right)}{k+2 r-2}.$$
In particular the central charge of $\W_{2-r}(\mf{so}_{2r},f)$ is zero. 
Note that the central charge of $\W_{k}(\g,f)$ is zero if  
$\W_{k}(\g,f)$ is trivial. 
\begin{Conj} \label{Conj:lisse-typeD}
Let $r$ be even, $r \geq 4$, and $f\in \mathbb{O}_{(2^{r-2},1^4)}$.
\begin{enumerate}
\item $\W_{2-r}(\mf{so}_{2r},f)=\C$.
\item $\W_{k}(\mf{so}_{2r},f)$ is lisse for any integer $k$ such that $k 
\geq 2-r$.
\end{enumerate}
\end{Conj}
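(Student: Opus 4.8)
The plan is to derive both parts from Theorem~\ref{Th:W-algebra-variety} by upgrading the associated-variety information already at our disposal. For part~(1) we know that $\W_{2-r}(\so_{2r},f)$ is lisse with central charge $0$, but this alone cannot force triviality, since lisse vertex algebras with non-positive central charge do exist (for instance the symplectic fermions, with $c=-2$); so the vanishing of $c$ must be combined with an honest character computation. Since $\W_{2-r}(\so_{2r},f)$ is the simple graded quotient of the (nonzero) vertex algebra $H^{0}_{DS,f}(V_{2-r}(\so_{2r}))$, it suffices to prove $H^{0}_{DS,f}(V_{2-r}(\so_{2r}))=\C$.

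To this end I would refine the computation in the proof of Theorem~\ref{Th:AMconj} from the $D_h$-degree-zero component to all degrees. Because $H^{i}_{DS,f}=0$ for $i\neq 0$ on $\on{KL}_k$ by Theorem~\ref{Th:W-algebra-variety}~(1), the $D_h$-graded character of $H^{0}_{DS,f}(V_{2-r}(\so_{2r}))$ equals the corresponding Euler characteristic of the Drinfeld--Sokolov complex, hence is determined by $\mathrm{ch}\,V_{2-r}(\so_{2r})$. In the proof of Theorem~\ref{Th:AMconj} the degree-zero piece $V_{2-r}(\so_{2r})_{[0]}$ was identified, via Jantzen's simplicity criterion, with the parabolically induced module $U(\mf{a})\otimes_{U(\mf{p})}\C|0\ket$, whose reduction is exactly $\C$; the goal would be to show that the contribution of every $D_h$-degree $d>0$ to $H^{0}_{DS,f}$ vanishes. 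The main obstacle is that $2-r$ is non-admissible, so there is no closed formula for $\mathrm{ch}\,V_{2-r}(\so_{2r})$ and one must control the maximal submodule of $V^{2-r}(\so_{2r})$ by hand; finding a free-field or coset realization in which this collapse is transparent seems the most promising way to overcome it.

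For part~(2) the relevant object is the simple quotient $\W_k(\so_{2r},f)$ of the \emph{universal} $\W$-algebra $\W^k(\so_{2r},f)=H^{0}_{DS,f}(V^k(\so_{2r}))$, whose associated variety is a $\C^{*}$-invariant Poisson subvariety of $\Slo_f$; lisse-ness is equivalent to $X_{\W_k(\so_{2r},f)}=\{f\}$. The case $k=2-r$ is covered by part~(1). For the remaining integral levels I would try to show that $(\so_{2r},k,f)$ is \emph{exceptional}, producing the collapse either by a deformation argument along the ray $k\geq 2-r$ or by identifying $\W_k(\so_{2r},f)$ with a known lisse vertex algebra through a coset or level-rank type isomorphism. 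One must take care that for $k\geq 0$ the affine vertex algebra is integrable, so $X_{V_k}=\{0\}$ and $H^{0}_{DS,f}(V_k)=0$ by Theorem~\ref{Th:W-algebra-variety}~(2); in this regime $\W_k(\so_{2r},f)$ is therefore \emph{not} the reduction of the simple affine vertex algebra, and must be accessed through the universal $\W$-algebra and its maximal ideal directly. I expect this integral regime to be the hardest part of the conjecture, since the required control of the simple quotient at non-admissible level is exactly the difficulty already encountered in part~(1), now uniformly in $k$.
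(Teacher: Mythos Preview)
The statement you are attempting to prove is labelled \emph{Conjecture} in the paper, and the paper does not prove it. Immediately after stating Conjecture~\ref{Conj:lisse-typeD}, the authors only record the partial evidence available: both parts hold for $r=4$ by \cite{AM15}, and part~(2) holds at the single level $k=2-r$ as a corollary of Theorem~\ref{Th:AMconj} together with Theorem~\ref{Th:W-algebra-variety}~(2)(b). There is therefore no ``paper's own proof'' to compare your proposal against.

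Your write-up is, accordingly, not a proof but a research plan, and you are candid about this: the phrases ``the goal would be'', ``I would try to show'', and ``I expect this integral regime to be the hardest part'' all signal that the essential steps are missing. The genuine gaps are exactly where you locate them. For part~(1), reducing to a character computation via the Euler--Poincar\'e principle is sound, but you then need $\operatorname{ch} V_{2-r}(\so_{2r})$ at a non-admissible level, which is not available; appealing to an unspecified free-field or coset realization is not a proof. For part~(2), your observation that for $k\in\Z_{\geq 0}$ one has $H^{0}_{DS,f}(V_k(\so_{2r}))=0$ is correct and important, and it shows that the functorial route through Theorem~\ref{Th:W-algebra-variety} cannot by itself handle the whole range $k\geq 2-r$; but the proposed ``deformation along the ray'' or ``level-rank type isomorphism'' is again a hope rather than an argument. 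In short, your outline identifies reasonable strategies and the right obstacles, but it does not close the conjecture, and neither does the paper.
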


Parts (1) and (2) of Conjecture \ref{Conj:lisse-typeD} 
are true for $r=4$ by \cite{AM15}.  
Also, Part (2) of the conjecture is true for $k=2-r$ as mentioned 
just above. 

\section{The short nilpotent orbit closure in type $B$}
\label{sec:type-B-short}
In this section, $\fing=\mf{so}_{2r+1}$ with $r\geq 3$.
Let $\Delta=\{\pm \varepsilon_i\pm  \varepsilon_j, \pm  \varepsilon_j 
\; |\; 1\leq i,j\leq {r }, i\not= j\}$ be the root system of $\g$ 
and take $\Delta_+=\{\varepsilon_i\pm  \varepsilon_j, \varepsilon_k  
\; |\; 1\leq i < j\leq {r }, 1 \leq k \leq {r } \}$ for the set of positive roots. 

Denote by $(e_i,h_i,f_i)$ the Chevalley generators of $\g$ 
 in the Bourbaki numbering, and 
fix the root vectors $e_\alpha,f_\alpha$, 
$\alpha \in \Delta_+$ so that $(h_i,\,i=1,\ldots,{r }) \cup 
(e_\alpha,f_\alpha, \, \alpha \in \Delta_+)$ is a Chevalley 
basis satisfying the conditions of \cite[Chapter IV, Definition 6]{Gar}. 
Denote by $\alpha_1,\ldots,\alpha_r$ the corresponding simple roots of $\g$. 
Let $\g= \n_- \oplus \h \oplus \n_+$ be the corresponding triangular decomposition. 
For $\alpha \in \Delta_+$, denote by $h_\alpha =[e_\alpha,f_{\alpha}]$ 
the corresponding coroot. 

Denote by $\varpi_1,\ldots,\varpi_r$ the fundamental weights 
and  by $\varpi_1^\vee,\ldots,\varpi_r^\vee$ the corresponding 
fundamental co-weights. Note that $\varpi_i^\vee$ and 
$\kappa^\sharp(\varpi_i)$ are proportional, with $\kappa^\sharp \colon\h^* \to \h$ 
the isomorphism induced from $(~|~)$. 
In particular $\kappa^\sharp(\varpi_1)=\varpi_1^\vee$. 

There is a unique short nilpotent orbit
$\mathbb{O}_{short}$ in $\fing$,
which is the nilpotent orbit associated with 
the $\mf{sl}_2$-triple $(e_{\theta_s}, h_{\theta_s},f_{\theta_s})$,
 where $\theta_s$ is 
 the highest short root $\varepsilon_1$ 
 and where  $h_{\theta_s}=2\varpi_1^{\vee}$.
So, 
$$\fing=\fing_{-1}\+\fing_0\+\fing_1,$$
where $\fing_j=\{x\in \fing\mid [h_{\theta_s},x]=2jx\}$. 
The nilpotent orbit $\mathbb{O}_{short}$ is labeled by
 the partition
$(3,1^{2r-2})$, and its 
weighted Dynkin diagram is given by
$$\begin{Dynkin}
	\Dbloc{\Dcirc\Deast\Dtext{t}{2}}
	\Dbloc{\Dcirc\Dwest\Deast\Dtext{t}{0}}
	\Dbloc{\Dcirc\Dwest\Deast\Dtext{t}{0}}
	\Dbloc{\Dcirc\Dwest\Deast\Dtext{t}{0}}
	\Dbloc{\Ddots}
	\Dbloc{\Dcirc\Dwest\Ddoubleeast\Dtext{t}{0}}
	\Drightarrow
	\Dbloc{\Dcirc\Ddoublewest\Dtext{t}{0}}
\end{Dynkin}$$
We have
\begin{align*}
\fing^{\natural}:=\fing_0^{f_{\theta_s}}=\bra e_{\alpha},f_{\alpha}\mid
\alpha=\varepsilon_i-\varepsilon_j, \varepsilon_i+\varepsilon_j\; ; 
2\leq i<j\leq r \ket \cong \mf{so}_{2r-2}.
\end{align*}
In particular, $\fing^{\natural}$ is simple.
As a module over $\fing^{\natural}=\mf{so}_{2r-2}$,
$\fing_{-2}$ decomposes as $\fing_{-2}\cong \C\+ \C^{2r-2}.$ 

In this section we prove the following assertion.
\begin{Th}\label{Th:short}
Let $\fing=\mf{so}_{2r+1}$, with $r \geq 3$,  
and let $k=-2$.
Then 
$$X_{V_k(\fing)}=\overline{\mathbb{O}_{short}}.$$
\end{Th}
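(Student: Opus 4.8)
The plan is to prove the two inclusions $\overline{\mathbb{O}_{short}}\subseteq X_{V_{-2}(\fing)}$ and $X_{V_{-2}(\fing)}\subseteq\overline{\mathbb{O}_{short}}$ separately, using throughout that $X_{V_{-2}(\fing)}$ is a closed, conic, $G$-stable Poisson subvariety of $\fing^{*}\cong\fing$. For the first inclusion I would follow the proof of Theorem~\ref{Th:AMconj}: by Theorem~\ref{Th:W-algebra-variety}~(2)(a) it suffices to show $H^{0}_{DS,f}(V_{-2}(\fing))\ne 0$ for $f=f_{\theta_s}\in\mathbb{O}_{short}$, and I would exploit the good grading attached to $h=h_{\theta_s}=2\varpi_1^{\vee}$. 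Because this grading is \emph{short}, we have $\fing_{even}=\fing$, so the subalgebra $\mathfrak{a}\subset\affg$ generated by the $x_\alpha\otimes t$ ($\alpha\in\Delta_{-1}$), the $x_\beta$ ($\beta\in\Delta_0$) and the $x_\gamma\otimes t^{-1}$ ($\gamma\in\Delta_1$) is isomorphic to all of $\fing$. Setting $D_h=D+\tfrac12 h$ and decomposing $V_{-2}(\fing)=\bigoplus_{d}V_{-2}(\fing)_{[d]}$ into $D_h$-eigenspaces, the key point is to identify the extreme piece $V_{-2}(\fing)_{[0]}$ with a parabolic Verma module of $\mathfrak{a}\cong\fing$, namely $U(\mathfrak{a})\otimes_{U(\mathfrak{p})}\C|0\ket$, where $\mathfrak{p}$ is the cominuscule parabolic (node $1$) with Levi $\fing_0\cong\mf{gl}_1\oplus\mf{so}_{2r-1}$, nilradical $\cong\fing_{-1}$, and opposite nilradical $\mathfrak{m}_-\cong\fing_1$.

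As in the $\mf{so}_{2r}$ case, this identification is obtained by applying Jantzen's simplicity criterion to the weight $\lambda$ of $|0\ket$—a weight proportional to $k=-2$ and trivial on the semisimple part $\mf{so}_{2r-1}$ of the Levi—to see that the parabolic Verma module is irreducible; since $V^{-2}(\fing)_{[0]}$ already equals this parabolic Verma module and $V_{-2}(\fing)$ is its simple quotient, irreducibility forces $V_{-2}(\fing)_{[0]}=U(\mathfrak{a})\otimes_{U(\mathfrak{p})}\C|0\ket$. The opposite nilradical $\mathfrak{m}_-\cong\fing_1$ is abelian (short grading: $[\fing_1,\fing_1]\subseteq\fing_2=0$), so $V_{-2}(\fing)_{[0]}$ is free of rank one over $U(\mathfrak{m}_-)=S(\fing_1)$. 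Writing the degree-zero Drinfeld--Sokolov cohomology as $H^{0}_{DS,f}(V_{-2}(\fing))_{[0]}\cong H_0(\mathfrak{m}_-,V_{-2}(\fing)_{[0]}\otimes\C_\chi)$ with $\chi(x)=(x\mid f_{\theta_s}\otimes t)$, freeness yields $H_0(\mathfrak{m}_-,V_{-2}(\fing)_{[0]}\otimes\C_\chi)\cong\C$, so $H^{0}_{DS,f}(V_{-2}(\fing))\ne 0$ and therefore $\overline{\mathbb{O}_{short}}\subseteq X_{V_{-2}(\fing)}$.

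For the reverse inclusion I would first establish $X_{V_{-2}(\fing)}\subseteq\mc{N}$, so that $X_{V_{-2}(\fing)}$ is a finite union of nilpotent orbit closures, and then cut it down to $\overline{\mathbb{O}_{short}}$ using singular vectors of $V^{-2}(\fing)$. The Chevalley basis fixed at the start of the section, together with the decompositions $\fing^{\natural}\cong\mf{so}_{2r-2}$ and $\fing_{-1}\cong\C\oplus\C^{2r-2}$ recorded above, supply the data to write down an explicit singular vector of low conformal weight at level $-2$; its symbol, together with its $G$-translates, should generate an ideal of $\C[\fing]$ whose zero locus lies in $\overline{\mathbb{O}_{short}}$. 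Here the normality of $\overline{\mathbb{O}_{short}}$—which holds by Theorem~\ref{Th:normal-partition}~(1), since $\lambda_1+\lambda_2=4$ for the partition $(3,1^{2r-2})$—is a convenient aid in identifying the resulting variety. Combined with the first inclusion and the fact that $X_{V_{-2}(\fing)}$ is a closed $G$-stable cone, this gives $X_{V_{-2}(\fing)}=\overline{\mathbb{O}_{short}}$.

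The main obstacle is precisely this reverse inclusion: exhibiting the correct singular vector(s) of $V^{-2}(\fing)$ and proving that the variety they define is exactly $\overline{\mathbb{O}_{short}}$, equivalently that $X_{V_{-2}(\fing)}$ contains no nilpotent orbit larger than, or incomparable to, $\mathbb{O}_{short}$. By contrast the first inclusion is essentially a transcription of the argument for Theorem~\ref{Th:AMconj}, the only genuinely new input being the verification of Jantzen's criterion for the weight attached to $k=-2$; moreover, because the grading here is short rather than almost short, $\mathfrak{a}$ is all of $\fing$ and $\mathfrak{m}_-$ is abelian, which renders the final $H_0$-computation especially transparent.
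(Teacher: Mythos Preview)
Your treatment of the inclusion $\overline{\mathbb{O}_{short}}\subseteq X_{V_{-2}(\fing)}$ is correct and agrees with the paper (Proposition~\ref{Pro:nonzero-and-simple}): Jantzen's criterion for the weight $-2\varpi_1$, freeness over $U(\mathfrak m_-)$, and the resulting nonvanishing of $H^0_{DS,f_{\theta_s}}(V_{-2}(\fing))$ are exactly what is used.

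The reverse inclusion, however, has a genuine gap, and the paper's argument is not the one you outline. You propose to ``first establish $X_{V_{-2}(\fing)}\subseteq\mc N$'' and then use a singular vector whose symbol cuts $\mc N$ down to $\overline{\mathbb{O}_{short}}$. But the only singular vector available at this level is $\sigma(w_2)$, and the paper computes (Lemmas~\ref{lem2:B-zero} and~\ref{lem:B-zero}) that the zero locus $V(W_2)$ of the ideal it generates is \emph{not} contained in $\mc N$: one has $V(W_2)=\overline{\SS_{short}}=\overline{G.\C^*\varpi_1^\vee}$, the closure of the Dixmier sheet through $\mathbb{O}_{short}$, which still contains a one-parameter family of semisimple orbits. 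So the singular-vector step alone only yields $X_{V_{-2}(\fing)}\subseteq\overline{\SS_{short}}$ (Proposition~\ref{Pro:sheet}), and there is no a priori reason that $X_{V_{-2}(\fing)}\subseteq\mc N$; indeed, as the paper itself shows in~\cite{AM16}, for other simple $\fing$ and integer levels the associated variety \emph{is} a sheet closure and not a nilpotent orbit closure.

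The missing idea is to feed the Drinfeld--Sokolov reduction back into the argument. From the dichotomy $X_{V_{-2}(\fing)}\in\{\overline{\SS_{short}},\,\overline{\mathbb{O}_{short}}\}$ (Lemma~\ref{Lem:or}), one must exclude $\overline{\SS_{short}}$. The paper does this by strengthening your nonvanishing to \emph{simplicity} of $H^0_{DS,f_{\theta_s}}(V_{-2}(\fing))$ (same Jantzen argument plus~\cite[Theorem~6.3]{AM16}), and then observing that since $X_{H^0_{DS,f_{\theta_s}}(\tilde V_{-2}(\fing))}=\overline{\SS_{short}}\cap\Slo_{f_{\theta_s}}=f_{\theta_s}+\C e_{\theta_s}$ is one-dimensional, $H^0_{DS,f_{\theta_s}}(\tilde V_{-2}(\fing))$ is strongly generated by its conformal vector; hence so is its simple quotient $H^0_{DS,f_{\theta_s}}(V_{-2}(\fing))$, which must then be $\on{Vir}_{c_{2,2r-3}}$ and in particular lisse (Theorem~\ref{Th:reduction-is-simple-virasoro}). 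Lisse-ness forces $X_{V_{-2}(\fing)}\cap\Slo_{f_{\theta_s}}=\{f_{\theta_s}\}$, ruling out $\overline{\SS_{short}}$. This $W$-algebra step is the substantive input your plan lacks.
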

Let $w_2$ 
be the singular vector with respect to the adjoint action of $\g$ 
as in \cite[Theorem 4.2]{AM15} 
which generates the irreducible representation $W_2$   
of $\g$ with highest weight $\theta+\theta_2$ in $S^{2}(\g)$, where 
$S^{2}(\g)$ is the component of $S(\g)$ of degree $2$ 
and $\theta_2$ is the highest root of the simple Lie algebra 
generated by the roots $\alpha_3,\ldots,\alpha_{r }$.  
Namely,
$$w_2=e_{\theta} e_{\theta_2}-\sum_{j=1}^2 e_{\beta_j+\theta_2} 
e_{\delta_j+\theta_2},$$
with
$\beta_1 := \alpha_2$, $\delta_1 := \alpha_1+\alpha_2+\alpha_3$, 
$\beta_2 := \alpha_2+\alpha_3$, $\delta_2 := \alpha_1+\alpha_2.$
Then $\sigma(w_2)$ is a singular vector of $V^k(\g)$ if and 
only if $k=-2$  \cite[Theorem 4.2]{AM15} where 
$\sigma$ 
is the natural embedding of $\g$-modules 
from $S^2(\g)$ to 
$V^{k}(\g)_2:=\{v \in V^k(\g) \; |\; D v = - 2v\}$ 
(see \cite[Lemma 4.1]{AM15}). 

Denote by $V(W_2)$ the zero locus in $\g^*\cong\g$ 
of the ideal of $S(\g)$ generated by $W_2$. 

\begin{lemma} \label{lem2:B-zero}
We have $V(W_2) \cap \mc{N} \subset \overline{\O_{(3,1^{n-3})}}$.
\end{lemma}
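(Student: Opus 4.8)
The plan is to recognise that, behind the whole module $W_2$, there is a single vector-valued equation, which becomes visible in the exterior-algebra model of $\so_{2r+1}$. Write $V=\C^{2r+1}$ with its nondegenerate symmetric form and identify $\g=\so(V)\cong\Lambda^2 V$, and $\g^*\cong\g$ via $(~|~)$. By construction $W_2$ is the irreducible summand of $S^2(\g)=S^2(\Lambda^2 V)$ of highest weight $\theta+\theta_2$. First I would observe that this highest weight equals $\varepsilon_1+\varepsilon_2+\varepsilon_3+\varepsilon_4$ (and $\varepsilon_1+\varepsilon_2+\varepsilon_3$ when $r=3$, where $\theta_2=\varepsilon_3$), which is exactly the highest weight of $\Lambda^4 V$. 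Since $\Lambda^4 V$ occurs with multiplicity one in $S^2(\Lambda^2 V)$, the submodule $W_2$ is precisely this $\Lambda^4 V$-isotypic component. (For $r=3$ one has $\Lambda^4 V\cong\Lambda^3 V$ by Hodge duality, but this does not affect the argument.)

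Next I would use the canonical $G$-equivariant surjection $S^2(\Lambda^2 V)\to\Lambda^4 V$, $\alpha\cdot\beta\mapsto\alpha\wedge\beta$, whose kernel is the orthogonal complement of the $\Lambda^4 V$-summand. Because every $\so_{2r+1}$-module is self-dual and $\Lambda^4 V$ appears with multiplicity one, a point $x\in\g\cong\g^*$ lies in $V(W_2)$ if and only if $x\cdot x\in S^2(\g)$ is annihilated by $W_2$, i.e. if and only if the image of $x\cdot x$ in $\Lambda^4 V$ vanishes; that image is $x\wedge x$. Thus the entire ideal collapses to one equation:
$$V(W_2)=\{x\in\Lambda^2 V\;:\;x\wedge x=0\},$$
which is exactly the variety of decomposable bivectors, that is, of skew-symmetric endomorphisms of $V$ of rank at most $2$.

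Finally I would read off the nilpotent locus. A skew-symmetric endomorphism has even rank, so a nilpotent element of $V(W_2)$ has rank $0$ or $2$; its Jordan type is then a partition of $n=2r+1$ with at least $n-2$ parts, namely $(1^{n})$, $(2^2,1^{n-4})$ or $(3,1^{n-3})$. Each of these is $\leq(3,1^{n-3})$ for the dominance order, hence the corresponding orbit is contained in $\overline{\O_{(3,1^{n-3})}}=\overline{\O_{short}}$, which proves the claim (in fact with equality).

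I expect the only delicate point to be the identification of $W_2$ with the $\Lambda^4 V$-summand: matching the highest weight $\theta+\theta_2$ to that of $\Lambda^4 V$ uniformly in $r$, verifying the multiplicity-one statement, and justifying the reduction of ``$W_2$ vanishes at $x$'' to the single equation $x\wedge x=0$ through self-duality and the multiplicity-one projection. Once this exterior-algebra dictionary is in place, the nilpotent-orbit bookkeeping at the end is immediate.
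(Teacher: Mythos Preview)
Your argument is correct and takes a genuinely different route from the paper's. The paper argues indirectly: since $(3,1^{2r-2})$ has $(3,2^2,1^{2r-6})$ as its unique minimal upper neighbour in $\P_1(2r+1)$, it suffices to show $\O_{(3,2^2,1^{2r-6})}\not\subset V(W_2)$, which they do by explicit computation, invoking \cite[Lemma~3.3]{AM16} and the case-by-case arguments of \cite[Lemmas~7.6, 9.2]{AM16}. By contrast, you identify $W_2$ with the $\Lambda^4 V$-summand of $S^2(\Lambda^2 V)$ and reduce the whole ideal to the single equation $x\wedge x=0$, obtaining a closed-form description $V(W_2)=\{\text{rank}\leq 2\}$.

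Your approach buys considerably more: you get $V(W_2)\cap\mc{N}=\overline{\O_{(3,1^{n-3})}}$ with equality, and in fact the full variety $V(W_2)=\overline{\SS_{short}}$ in one stroke, bypassing both the orbit exclusion argument here and the lengthy zero-weight-space computation the paper carries out later in Lemmas~\ref{lem:B-zw2}--\ref{lem:B-zero} to identify the semisimple part of $V(W_2)$. The multiplicity-one claim you flag as delicate is indeed the crux; it follows from the $GL$-plethysm $S^2(\Lambda^2 V)\cong\Lambda^4 V\oplus V^{(2,2)}$ together with the Littlewood branching of $V^{(2,2)}$ to $O_{2r+1}$, which yields $V^{2\theta}\oplus V^{2\varepsilon_1}\oplus\C$ with no $\Lambda^4 V$ constituent. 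The paper's approach, on the other hand, is an instance of a general method from \cite{AM16} that does not rely on the exterior-algebra model peculiar to the orthogonal case, and so transports more directly to other types.
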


\begin{proof}
We observe that that $(3,1^{2r-2}) \in \P_1(2r+1)$ is the unique minimal 
$1$-degeneration of $(3,2^2,1^{2r-6}) \in \P_1(2r+1)$ (see 
Definitions~\ref{Def:degeneration} and~\ref{Def:deg-min}). 
Therefore, it is enough to show that $V(W_2)$ do not contain 
$\overline{\O_{(3,2^2,1^{2r-6})}}$. 
To proceed, we apply \cite[Lemma 3.3]{AM16} 
and we argue as in the proofs 
of Lemma 7.6 and Lemma 9.2 of \cite{AM16}. 
Since the verification are similar 
we omit the details here. 
\end{proof}

Set $S(\g)^\h=\{x \in S(\g) \; | \; [h,x]=0 \text{ for all } h \in \h\}$ 
and let 
$\Psi \colon S(\g)^\h \to S(\h)$ 
be the Chevalley projection map.    

\begin{lemma} \label{lem:B-zw2} 
The zero weight space of $W_2$ has dimension ${r }({r }-1)/2$. 
\end{lemma}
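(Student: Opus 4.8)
The plan is to recognize $W_2$ as a fourth exterior power of the vector representation and then reduce the statement to a weight count. First I would write the highest weight $\theta+\theta_2$ of $W_2$ in $\varepsilon$-coordinates. Since $\theta=\varepsilon_1+\varepsilon_2$ is the highest root, and $\theta_2=\varepsilon_3+\varepsilon_4$ is the highest root of the type $B_{r-2}$ subsystem generated by $\alpha_3,\dots,\alpha_r$ (for $r\geq 4$; while $\theta_2=\alpha_r=\varepsilon_3$ when $r=3$), one obtains
$$\theta+\theta_2=\varepsilon_1+\varepsilon_2+\varepsilon_3+\varepsilon_4\qquad(r\geq 4),$$
and $\theta+\theta_2=\varepsilon_1+\varepsilon_2+\varepsilon_3$ when $r=3$. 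In either case this is the highest weight of $\Lambda^4\C^{2r+1}$, the fourth exterior power of the vector (standard) representation: a highest weight vector is $e_1\wedge e_2\wedge e_3\wedge e_4$ of weight $\varepsilon_1+\varepsilon_2+\varepsilon_3+\varepsilon_4$ for $r\geq 4$, and $e_1\wedge e_2\wedge e_3\wedge e_0$ of weight $\varepsilon_1+\varepsilon_2+\varepsilon_3$ for $r=3$, where $e_0$ is the weight-zero basis vector. Since $\Lambda^4\C^{2r+1}$ is irreducible over $\so_{2r+1}$ for every $r\geq 3$ and an irreducible module is determined by its highest weight, I conclude $W_2\cong \Lambda^4\C^{2r+1}$.

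Because the dimension of the zero weight space depends only on the isomorphism class, it then suffices to count zero weight vectors in $\Lambda^4\C^{2r+1}$. I would use the weight basis $\{e_0,e_{\pm 1},\dots,e_{\pm r}\}$ of the vector representation, where $e_0$ has weight $0$ and $e_{\pm i}$ has weight $\pm\varepsilon_i$; the wedges $e_a\wedge e_b\wedge e_c\wedge e_d$ over quadruples of distinct indices form a weight basis of $\Lambda^4\C^{2r+1}$, the weight being the sum of the four individual weights. Such a quadruple lies in the zero weight space precisely when the coefficient of each $\varepsilon_k$ cancels, that is, when the chosen indices of nonzero weight occur in matching pairs $\{+k,-k\}$. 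Writing $2p+z=4$, where $p$ counts these pairs and $z\in\{0,1\}$ records whether $e_0$ is used, the only solution is $p=2$, $z=0$. Thus a zero weight basis vector is $e_k\wedge e_{-k}\wedge e_l\wedge e_{-l}$ for a pair $1\leq k<l\leq r$, and there are exactly $\binom{r}{2}=r(r-1)/2$ of them, as claimed.

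The argument is short, and the only point requiring genuine care is the identification $W_2\cong\Lambda^4\C^{2r+1}$ in small rank. For $r=3$ and $r=4$ the weight $\theta+\theta_2$ equals $2\varpi_3$ and $2\varpi_4$ respectively, rather than a fundamental weight $\varpi_4$, and for $r=3$ the root $\theta_2$ degenerates to the short root $\varepsilon_3$, so that $W_2$ is the exterior cube rather than the exterior fourth power in the familiar labeling. I would resolve this uniformly using that $\Lambda^k\C^{2r+1}$ is irreducible over $\so_{2r+1}$ for all $0\leq k\leq r$, together with the Hodge-type isomorphism $\Lambda^4\C^7\cong\Lambda^3\C^7$ which covers the case $r=3$. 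Once this bookkeeping is settled, the case analysis $2p+z=4$ is uniform in $r$, and the count completes the proof.
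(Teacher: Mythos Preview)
Your proof is correct and follows the same idea as the paper: identify $W_2$ with $\Lambda^4\C^{2r+1}$ and count zero-weight wedges. The paper invokes this identification only for $r\geq 5$ and leaves $r=3,4$ as a direct verification (noting the highest weight is $2\varpi_r$), whereas you make the argument uniform by observing that $\Lambda^4\C^{2r+1}$ still has highest weight $\varepsilon_1+\cdots+\varepsilon_4=2\varpi_4$ when $r=4$, and by using the Hodge isomorphism $\Lambda^4\C^7\cong\Lambda^3\C^7$ when $r=3$; this is a small but genuine improvement in presentation over the paper's case split.
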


\begin{proof} 
It is easy to verify the statement for $r=3$ and $r=4$ where $W_2$ 
has highest weight $2\varpi_{r }$. 
For ${r } \geq 5$, 
$W_2$ has highest weight $\varpi_4$. 
Hence $W_2 \cong \wedge^4 \C^{2r+1}$ and it is easy to show that the zero weight 
space of $W_2$ has dimension ${r }({r }-1)/2$. 
\end{proof}

Set $W_2^{\h,({r })}:=\Psi(W_2 \cap S(\g)^\h)$. 
By Lemma \ref{lem:B-zw2}, $W_2^{\h,({r })}$  
has dimension ${r }({r }-1)/2$. 

\begin{lemma} \label{lem:B-gen}
We can describe a set of generators of $W_2^{\h,({r })}$ by induction on ${r }$ as follows. 
Set 
\begin{align*} 
&& p_1^{(3)}  := h_1 h_3, \quad 
p_2^{(3)} :=  ( 2 h_2 + h_3) h_3 , \quad 
p_3^{(3)} :=  (h_1 + h_2) h_2 + (h_2 h_3)/2. 
\end{align*}
Assume that ${r } \geq 4$ and that generators $p_1^{(k)},\ldots,p_{k(k-1)/2}^{(k)}$ 
of $W_2^{\h,(k)}$ have been constructed for any $k \in\{3,\ldots,{r }-1\}$.   
Set: 
\begin{align*} 
&p_{j}^{({r })} :=  h_1 h_{j+2}, \; j=1,\ldots,{r }-2, \quad
p_{k}^{({r })} :=  p_k ^{({r }-1)}, \; k = r -1,\ldots, r(r-1)/2-1,\\ 
&p_{l}^{({r })} :=   (h_1 + h_2) h_2 + \tilde{p}_1^{({r })}, \; l=r(r-1)/2, 
\end{align*}
where $\tilde{p}_1^{({r })}$ is a homogeneous polynomial of degree 2 in the variables 
$h_2,\ldots,h_{r }$ with no term in $h_2^2$. 
Then $p_1^{({r })},\ldots,p_{{r }({r }-1)/2}^{({r })}$ generate $W_2^{\h,({r })}$. 
\end{lemma}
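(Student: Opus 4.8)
The plan is to prove the lemma in two stages: first give an intrinsic description of the space $W_2^{\h,(r)}$, and then check that the recursively defined polynomials form a basis of it. Introduce coordinates $y_i:=\varepsilon_i^\vee$ on $\h$, so that $S(\h)=\C[y_1,\dots,y_r]$, the coroots are $h_i=y_i-y_{i+1}$ for $i<r$ and $h_r=2y_r$, and the Weyl group $W$ of type $B_r$ acts on $S(\h)$ by signed permutations of the $y_i$. Write $S^2(\h)_{\mathrm{off}}:=\on{span}_\C\{y_iy_j\mid 1\le i<j\le r\}$ and $S^2(\h)_{\mathrm{diag}}:=\on{span}_\C\{y_i^2\mid 1\le i\le r\}$. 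The claim I would prove is that $W_2^{\h,(r)}=S^2(\h)_{\mathrm{off}}$; granting this, the lemma reduces to the purely linear-algebraic statement that $p_1^{(r)},\dots,p_{r(r-1)/2}^{(r)}$ span $S^2(\h)_{\mathrm{off}}$.

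First I would show that $W_2^{\h,(r)}$ is $W$-stable. As $W_2$ is an irreducible $\g$-submodule of $S^2(\g)$ it is stable under the adjoint action of the connected group $G$, so its zero weight space $W_2\cap S(\g)^\h$ is stable under the normalizer $N_G(H)$ of the maximal torus $H$. The Chevalley projection $\Psi$ restricted to the degree-two weight-zero part $S^2(\g)_0=S^2(\h)\oplus\bigl(\bigoplus_{\alpha\in\Delta_+}\C\, e_\alpha f_\alpha\bigr)$ is the projection onto $S^2(\h)$; since $N_G(H)$ preserves $S^2(\h)$ and permutes the lines $\C\,e_\alpha f_\alpha$, it is $N_G(H)$-equivariant on weight-zero elements, and therefore $W_2^{\h,(r)}=\Psi(W_2\cap S(\g)^\h)$ is a $W$-stable subspace of $S^2(\h)$. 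By Lemma \ref{lem:B-zw2} (together with the injectivity of $\Psi$ on the zero weight space recorded just before the lemma) its dimension is $r(r-1)/2$.

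Next I would determine $W_2^{\h,(r)}$ from the $W$-module structure of $S^2(\h)$. As a module over $W\cong(\Z/2)^r\rtimes S_r$ one has $S^2(\h)=S^2(\h)_{\mathrm{diag}}\oplus S^2(\h)_{\mathrm{off}}$, the two summands lying in distinct $(\Z/2)^r$-isotypic components; here $S^2(\h)_{\mathrm{diag}}$ is the $(\Z/2)^r$-fixed part, isomorphic to $\mathbf 1\oplus\mathrm{std}$ as an $S_r$-module, while $S^2(\h)_{\mathrm{off}}$ is irreducible. Hence any $W$-stable subspace $V$ splits as $(V\cap S^2(\h)_{\mathrm{diag}})\oplus(V\cap S^2(\h)_{\mathrm{off}})$, with $V\cap S^2(\h)_{\mathrm{off}}\in\{0,S^2(\h)_{\mathrm{off}}\}$ and $\dim(V\cap S^2(\h)_{\mathrm{diag}})\in\{0,1,r-1,r\}$. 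For $r\ge4$ one has $r(r-1)/2>r$, so the only $W$-stable subspace of dimension $r(r-1)/2$ is $S^2(\h)_{\mathrm{off}}$, giving $W_2^{\h,(r)}=S^2(\h)_{\mathrm{off}}$. The value $r=3$ is not decided by this counting (there $r(r-1)/2=r$ and $S^2(\h)_{\mathrm{diag}}$ is also a candidate), so I would dispose of the base case directly, computing the Chevalley projections of a basis of the weight-zero space of the $\mf{so}_7$-module $W_2$ and checking that they are $p_1^{(3)},p_2^{(3)},p_3^{(3)}$, which indeed lie in $S^2(\h)_{\mathrm{off}}$ and are independent.

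It then remains to verify that the listed polynomials span $S^2(\h)_{\mathrm{off}}$, which I would do by induction on $r$ using the subalgebra $\mf{so}_{2r-1}=\langle e_\alpha,f_\alpha\mid\alpha\in\{\alpha_2,\dots,\alpha_r\}\rangle$, i.e.\ the one whose short simple root $\alpha_r$ is preserved. In its coroots $h_2,\dots,h_r$ the reused generators $p_k^{(r-1)}$ span the off-diagonal block $\on{span}\{y_iy_j\mid 2\le i<j\le r\}$, while the new generators $p_j^{(r)}=h_1h_{j+2}=(y_1-y_2)(y_{j+2}-y_{j+3})$ (with $h_r=2y_r$ when $j=r-2$), together with $p_{r(r-1)/2}^{(r)}=(h_1+h_2)h_2+\tilde p_1^{(r)}$, supply modulo that block the remaining $r-1$ directions $y_1y_2,\dots,y_1y_r$; the freedom in $\tilde p_1^{(r)}$ (degree two in $h_2,\dots,h_r$ with no $h_2^2$ term) is exactly what is needed to cancel the diagonal term $y_3^2$ produced by $(h_1+h_2)h_2$, so that this last generator lands in $S^2(\h)_{\mathrm{off}}$ and contributes $y_1y_2$. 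The step I expect to be the main obstacle is the intrinsic identification $W_2^{\h,(r)}=S^2(\h)_{\mathrm{off}}$: the representation-theoretic argument is clean for $r\ge4$ but rests on the precise $W$-decomposition of $S^2(\h)$, in particular the irreducibility of $S^2(\h)_{\mathrm{off}}$, and the base case $r=3$ genuinely requires a hand computation of the Chevalley projection on the weight-zero space of $W_2$.
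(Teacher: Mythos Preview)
Your approach is correct and genuinely different from the paper's. The paper never identifies $W_2^{\h,(r)}$ intrinsically; instead it manufactures weight-zero elements of $W_2$ by hand, computing brackets such as $[f_{\beta_1+\theta_2},[f_{\delta_1+\theta_2},w_2]]\equiv h_{\delta_1+\theta_2}h_{\beta_1+\theta_2}$ modulo $(\n_-+\n_+)S(\g)$, and then runs the induction through the key identity $[f_{\alpha_1+\alpha_2},[f_{\alpha_3+\alpha_4},w_2]]=w_2'$, which embeds the rank-$(r-1)$ highest weight vector inside the rank-$r$ module. From these explicit elements it extracts the $p_i^{(r)}$ by linear combinations, and the specific $\tilde p_1^{(r)}$ is the one forced by the computation of $q_1^{(r)}=h_{\delta_1+\theta_2}h_{\beta_1+\theta_2}$ minus the already-known $h_1h_j$'s. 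Your route, by contrast, pins down the target space $S^2(\h)_{\mathrm{off}}$ a priori via Weyl-group symmetry and the $W$-decomposition of $S^2(\h)$, reducing the lemma to elementary linear algebra in the $y_i$'s; this is cleaner for $r\ge 4$ and explains structurally why the answer has the shape it does, whereas the paper's argument is more opaque but entirely self-contained and yields the actual $\tilde p_1^{(r)}$ coming from $w_2$.

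One soft spot to watch: you invoke $\dim W_2^{\h,(r)}=r(r-1)/2$, i.e.\ injectivity of $\Psi$ on the zero weight space of $W_2$, as ``recorded just before the lemma''. The paper asserts this but only justifies it \emph{a posteriori} through its own proof of the lemma (by exhibiting $r(r-1)/2$ independent elements in the image). In your framework you can avoid circularity cheaply: a single bracket computation such as $\Psi([f_{\theta_2},[f_\theta,w_2]])=h_\theta h_{\theta_2}=(y_1+y_2)(y_3+y_4)\in S^2(\h)_{\mathrm{off}}$ shows $W_2^{\h,(r)}\not\subset S^2(\h)_{\mathrm{diag}}$, whence $W_2^{\h,(r)}\supseteq S^2(\h)_{\mathrm{off}}$ by irreducibility, and the upper bound $\dim W_2^{\h,(r)}\le\dim(W_2)_0=r(r-1)/2$ then gives equality. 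With that tweak your argument stands on its own; the $r=3$ base case still needs the direct computation you anticipate.
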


\begin{proof} 
%
Since the $r(r-1)/2$ elements $p_i^{({r })}$ as described in the lemma 
are linearly independent, it suffices to prove that they are elements of 
$W_2^{\h,({r })}$ by Lemma \ref{lem:B-zw2}. 

First of all, we have the following relations whose verifications are left to the reader:
\begin{align*}
&[f_{\beta_1+\theta_2},[ f_{\delta_1+\theta_2}, w_2]] \equiv   h_{\delta_1+\theta_2} h_{\beta_1+\theta_2} ,\qquad  
[f_{\delta_2+\theta_2},[ f_{\beta_2+\theta_2}, w_2]] 
\equiv   h_{\delta_2+\theta_2} h_{\beta_2+\theta_2} , \\
&  [f_{\theta_2},[ f_{\theta}, w_2]] \equiv   h_{\theta} h_{\theta_2}, \qquad   [f_{\alpha_4},[f_{\alpha_3+\alpha_4 +2(\alpha_5 +\cdots+\alpha_{{r }})},[f_{\theta},w_2]]] 
\equiv  h_{\theta} h_{\alpha_4} , \\
& [f_{\alpha_3+\alpha_4},[f_{\alpha_4+2(\alpha_5 +\cdots+\alpha_{{r }})},[f_{\theta},w_2]]] 
\equiv  h_{\theta} h_{\alpha_3+\alpha_4} ,  
\end{align*}
where $\equiv$ means that the equality is modulo $(\mf{n}_- +\mf{n}_+) S(\g)$. 
Moreover, if ${r } \geq 5$ then for any $j =5,\ldots,{r }-1$: 
\begin{align*}
[f_{\alpha_4+\cdots+\alpha_j},[f_{\alpha_3+\cdots+\alpha_j
+2(\alpha_{j+1} +\cdots+\alpha_{{r }})},[f_{\theta},w_2]]] 
&\equiv h_{\theta} h_{\alpha_4+\cdots+\alpha_j} . &
\end{align*}
From this, we deduce that the elements 
$$h_{\delta_1+\theta_2} h_{\beta_1+\theta_2}, 
\quad h_{\delta_2+\theta_2} h_{\beta_2+\theta_2}, \quad  
h_{\theta} h_{\theta_2}, \quad h_{\theta} h_{\alpha_4}, 
\quad h_{\theta} h_{\alpha_3+\alpha_4}$$
are in $W_2^{\h,({r })}$. Moreover, if ${r } \geq 5$, then the element 
$h_{\theta} h_{\alpha_4+\cdots+\alpha_j}$ 
is in $W_2^{\h,({r })}$ for any $j =5,\ldots,{r }-1$. 
For ${r } \geq 5$, 
$h_{\theta_2} = h_3 + 2h_4 +\cdots 2 h_{{r }-1} +h_{r },$ 
and for ${r }=4$,  
$h_{\theta_2} = h_3 + h_4.$ 
Then, since the elements $h_{\theta} h_{\alpha_4+\cdots+\alpha_{{r }-1}}$, 
$h_{\theta} h_{\alpha_4+\cdots+\alpha_j} -h_{\theta} h_{\alpha_4+\cdots+\alpha_{j-1}}$  
for $j =5,\ldots,{r }-1$, and $h_{\theta} h_{\alpha_3+\alpha_4}-h_{\theta} h_{\alpha_4}$ 
are in $W_2^{\h,({r })}$, 
the elements $h_{\theta} h_{\alpha_j}$ also belong to $W_2^{\h,({r })}$  
for $j=3,\ldots,{r }$ and ${r } \geq 4$. 
In conclusion, the elements 
\begin{eqnarray} \label{eq:elem}
& &  \quad h_{\delta_1+\theta_2} h_{\beta_1+\theta_2}, \quad h_{\delta_2+\theta_2} h_{\beta_2+\theta_2}, 
\quad h_{\theta} h_{\alpha_j} = h_\theta h_j ,\quad  j =3,\ldots,{r }-1,
\end{eqnarray}
are in $W_2^{\h,({r })}$.

We now prove the statement by induction on ${r }$. 

\noindent
$\ast$ ${r }=3$. Since $q_1^{(3)} := h_{\delta_1+\theta_2} h_{\beta_1+\theta_2}$, 
$q_2^{(3)} := h_{\delta_2+\theta_2} h_{\beta_2+\theta_2}$ and 
$q_3^{(3)} := h_\theta h_{\theta_2}$ 
 are in $W_2^{\h,(3)}$, 
the elements $p_1^{(3)} = q_2^{(3)} - q_1^{(3)}  = h_1 h_3$, 
$ p_2^{(3)} = q_3^{(3)} - h_1 h_3 = (2 h_2 +h_3) h_3$ and 
$p_3^{(3)} =  (q_1^{(3)} - q_3^{(3)} )/2 = (h_1+ h_2) h_2 + (h_2 h_3)/2$ 
are in $W_2^{\h,(3)}$ too, whence the statement. 

\noindent
$\ast$ ${r }=4$. 
Let $\g'$ be the subalgebra of $\g$ of type $B_{3}$ generated by 
$\alpha_2,\alpha_3,\alpha_4$ and assume the statement true for $\g'$. 
We denote by $w'_2$ the elements of $S^2(\g')$ corresponding to $\g'$, 
and by $\theta',\theta'_2,\beta'_j,\delta'_j$, $j=1,2$, 
the roots corresponding to $\theta,\theta_2,\beta_j,\delta_j$, $j=1,2$, 
for $\g'$. 
A direct computation shows that  
\begin{eqnarray} \label{eq:ind1}
[ f_{\alpha_1+\alpha_2} , [f_{\alpha_3+\alpha_4},w_2]] = 
e_{\theta'} e_{\theta'_2}-\sum_{k=1}^2 e_{\beta'_j+\theta'_2} e_{\delta'_j+\theta'_2} =w'_2.
\end{eqnarray}
It follows from (\ref{eq:ind1}) that the elements 
$p_1^{(3)}  =h_2 h_4$, $p_2^{(3)} = (2 h_3 +h_4)h_4$ and  
$p_3^{(3)}= (h_2+h_3)h_3 +(h_3 h_4)/2$ 
are in $W_2^{\h,(4)}$. 
Moreover, by (\ref{eq:elem}), since 
$q_1^{(4)}:= h_{\delta_1+\theta_2} h_{\beta_1+\theta_2}$, 
$q_2 ^{(4)}:= h_{\delta_2+\theta_2} h_{\beta_2+\theta_2}$, 
$q_3^{(4)} := h_\theta h_{\alpha_3}$ and 
$q_4^{(4)}:= h_\theta h_{\alpha_4}$ 
are in $W_2^{\h,(4)}$, 
the elements $q_2^{(4)} - q_1^{(4)} = h_1 h_3$, 
$q_4^{(4)} - 2 h_2 h_4 - (2 h_3 + h_4)h_4 =  h_1 h_4$ and 
$q_1^{(4)} -   h_1 h_3 - h_1 h_4 = (h_1+h_2)h_2 + (h_3+h_4)(h_2+h_3+h_4)$ 
are in $W_2^{\h,(4)}$ too. 
Hence the statement is true for ${r }=4$. 

\noindent
$\ast$ Assume the statement true for any rank $< {r }$. 
Let $\g'$ be the subalgebra of $\g$ of type $B_{{r }-1}$ generated by 
$\alpha_2,\ldots,\alpha_{r }$ and assume the statement true for $\g'$. 
We denote by $w'_2$ the elements of $S^2(\g')$ corresponding to $w_2$ for $\g'$, 
and by $\theta',\theta'_2,\beta'_j,\delta'_j$, $j=1,2$, 
the roots corresponding to $\theta,\theta_2,\beta_j,\delta_j$, $j=1,2$, 
for $\g'$. 
A direct computation shows that  
\begin{eqnarray} \label{eq:ind}
[ f_{\alpha_1+\alpha_2} , [f_{\alpha_3+\alpha_4},w_2]] = 
e_{\theta'} e_{\theta'_2}-\sum_{k=1}^2 e_{\beta'_j+\theta'_2} e_{\delta'_j+\theta'_2} =w'_2.
\end{eqnarray}
From~(\ref{eq:ind}), we deduce that the elements $p_k^{({r }-1)}$, 
for $k =r-1,\ldots,{r }({r }-1)/2-1$, viewed as polynomials in the variables $h_2,\ldots,h_{r }$, 
are in $W_2^{\h,({r })}$. 
In addition, by~(\ref{eq:elem}), 
$q_1^{({r })} := h_{\delta_1+\theta_2} h_{\beta_1+\theta_2}$, 
$q_2^{({r })} := h_{\delta_2+\theta_2} h_{\beta_2+\theta_2}$, 
$q_3^{({r })}  := h_{\theta} h_{\theta_2}$ 
and $q_{j} ^{({r })} := h_\theta h_{\alpha_j}$, $j=3,\ldots,r $, 
are in $W_2^{\h,({r })}$. 
We first deduce that 
$q_2^{({r })} - q_1^{({r })} = h_1 h_3$ is in $W_2^{\h,({r })}.$
Next, for $j=4,\ldots,r$, we have 
$$q_{j} ^{({r })} = 
h_1 h_j +h_2 h_j + (h_2 + 2 h_3 + \cdots + 2 h_{{r }-1} +h_{r }) h_j ,$$
whence $h_1 h_j \in W_2^{\h,({r })}$ by the induction hypothesis. 
Finally, we deduce that 
$q_1^{({r })}  - h_1(h_3 + 2 h_4+\cdots+2 h_{{r }-1}+ h_{r } )$ 
is in $W_2^{\h,({r })}$ 
since $h_1 h_j \in W_2^{\h,({r })}$ for any $j=3,\ldots,r$. 
We have proven the expected statement for ${r }$. 
\end{proof}

\begin{lemma} \label{lem:B-zero} 
Let $\lam$ be a nonzero semisimple element of $\g^*\cong \g$ which belongs to 
$V(W_2)$.  
Then either $\lam \in G.\C \varpi_1$, or $\lam \in G.\C (-\varpi_{i-1} +\varpi_{i})$ 
for some $i=2,\ldots,{r }-1$, 
or $\lam \in G.\C(- \varpi_{{r }-1}+2\varpi_{r })$. 
In particular, 
$\lam \in V(W_2)$ if and only if 
$\lam \in \bigcup_{i=1}^r \C \eps_i=G.\C \varepsilon_1.$
\end{lemma}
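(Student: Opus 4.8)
The plan is to reduce the whole question to an explicit finite computation in the Cartan subalgebra, exploiting the generators produced in Lemma~\ref{lem:B-gen}. Since $W_2$ is a $\g$-submodule of $S^2(\g)$, the ideal it generates is $G$-stable, so $V(W_2)$ is a $G$-stable closed cone in $\g^*\cong\g$. As any nonzero semisimple element is $G$-conjugate to an element of $\h$, it suffices to determine $V(W_2)\cap\h$ and then $G$-saturate. First I would record the evaluation principle: for $\lambda\in\h$ and a weight vector $v\in W_2$, viewed as a quadratic function on $\g^*\cong\g$, only the $S^2(\h)$-component of $v$ contributes to $v(\lambda)$, because $(x|\lambda)=0$ for every root vector $x$. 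Hence $v(\lambda)=0$ automatically when $v$ has nonzero weight, and $\lambda\in V(W_2)$ if and only if $p(\lambda)=0$ for every $p\in W_2^{\h,(r)}=\Psi(W_2\cap S(\g)^\h)$, i.e. for the explicit generators $p_1^{(r)},\dots,p_{r(r-1)/2}^{(r)}$ of Lemma~\ref{lem:B-gen}. Under $\kappa^\sharp$ the coordinate $h_i$ becomes the $\varpi_i$-coefficient of $\lambda$, so solving this quadratic system in the variables $h_1,\dots,h_r$ and rewriting the solution lines in fundamental-weight coordinates will directly produce the three families $\C\varpi_1$, $\C(-\varpi_{i-1}+\varpi_i)$ and $\C(-\varpi_{r-1}+2\varpi_r)$.

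For the inclusion $\supseteq$ I would evaluate the generators at $\varpi_1$, whose coordinates are $(1,0,\dots,0)$: the relations $p_j^{(r)}=h_1h_{j+2}$ and the lower-rank quadratic block vanish at once, and the last generator $(h_1+h_2)h_2+\tilde p_1^{(r)}$ vanishes because $\tilde p_1^{(r)}$ has no $h_2^2$-term and involves only $h_2,\dots,h_r$. Thus $\varpi_1\in V(W_2)$, and by $G$-stability $G.\C\varpi_1=\bigcup_{i=1}^r\C\varepsilon_i\subseteq V(W_2)$, using $\varpi_1=\varepsilon_1$ and that the Weyl group permutes the lines $\C\varepsilon_i$. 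The reverse inclusion $V(W_2)\cap\h\subseteq\bigcup_i\C\varepsilon_i$ I would prove by induction on $r$, the base case $r=3$ being a direct solution of the three quadrics $h_1h_3$, $(2h_2+h_3)h_3$, $(h_1+h_2)h_2+\tfrac12 h_2h_3$. For the inductive step the relations $h_1h_j=0$ with $j\geq3$ split the analysis: if $h_1\neq0$ then $h_3=\dots=h_r=0$, the lower-rank block is automatically satisfied and $\tilde p_1^{(r)}$ vanishes, so the last relation reduces to $(h_1+h_2)h_2=0$ and yields exactly $\C\varepsilon_1$ and $\C\varepsilon_2$; if $h_1=0$ then $(h_2,\dots,h_r)$ lies in the rank-$(r-1)$ solution set $\bigcup_{i=2}^r\C\varepsilon_i$ by induction, the lines $\C\varepsilon_3,\dots,\C\varepsilon_r$ survive the final relation by the already-established inclusion $\supseteq$, while on $\C\varepsilon_2$ (where $h_3=\dots=h_r=0$) that relation becomes $h_2^2=0$ and prunes the line to the origin. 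Assembling both cases gives $V(W_2)\cap\h=\bigcup_{i=1}^r\C\varepsilon_i$, and $G$-saturation together with $\C\varepsilon_1=\C\varpi_1$ finishes the proof.

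The main obstacle I anticipate is organizational rather than computational: one must set up the evaluation reduction carefully so that membership in $V(W_2)$ becomes the finite system in $h_1,\dots,h_r$, and then manage the re-indexing between the rank-$r$ and rank-$(r-1)$ generators inside the induction. The one genuinely delicate point is the branch $h_1=0$, where a priori the extra relation $h_2^2+\tilde p_1^{(r)}=0$ might cut out further components; the resolution is that I only ever need to evaluate $\tilde p_1^{(r)}$ at points with $h_3=\dots=h_r=0$, where it vanishes identically since it has no $h_2^2$-term and every remaining monomial carries a factor among $h_3,\dots,h_r$. Thus its precise form is never required, and combining this observation with the inclusion $\supseteq$ shows that no spurious lines can appear.
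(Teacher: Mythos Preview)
Your argument is correct and follows the same inductive strategy as the paper's own proof: reduce to $\h^*$ via the Chevalley projection, use the explicit generators of Lemma~\ref{lem:B-gen}, and split on whether $\lambda_1=h_1$ vanishes. One minor notational slip (which does not affect the logic): in the $h_1=0$ branch the line you call $\C\varepsilon_2$ is, as an element of $\h^*$, actually $\C\varpi_2=\C(\varepsilon_1+\varepsilon_2)$, since $\varepsilon_2$ has $h_1$-coordinate $-1$; your parenthetical description ``where $h_3=\dots=h_r=0$'' is nonetheless correct for that line, as is the pruning via $h_2^2=0$, so the conclusion stands.
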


\begin{proof} 
We have 
$$V(W_2) \cap \h^*=\{\lam \in \h^* \; |\; p(\lam)=0 \text{ for all } 
p \in \Psi(W_2 \cap S(\g)^\h)\}.$$
Since $V(W_2)$ is $G$-invariant, it is enough to prove the lemma for nonzero 
elements $\lam \in V(W_2) \cap \h^*$. 

Let $\lam \in \h^*$ that we write as $\lam=\sum_{i=1}^{r } \lam_i 
\varpi_i$, $\lam_i\in\C$. 
It suffices to prove that if $\lam \in V(W_2)$, then $\lam$ is the union 
of the sets as described in the lemma. Indeed, it is easy to verify that, conversely, 
these sets all lie in $V(W_2)$. 

We prove the statement by induction on ${r }$. 
Assume that $\lam \in V(W_2)$. 
By Lemma~\ref{lem:B-gen}, we get 
$$\lam_1 \lam_3 =0, \qquad ( 2 \lam_2 + \lam_3) \lam_3 =0, 
\qquad (\lam_1 + \lam_2) \lam_2+(\lam_2 \lam_3)/2 =0 .$$
So, if $\lam_3 =0$, then either $\lam_2=0$, or $\lam_1=-\lam_2$, and 
if $\lam_3 \not=0$, then $\lam_1=0$ and $\lam_3 = -  2 \lam_2$, 
whence the statement. 

Assume ${r } \geq 4$ and the statement true for any rank $<{r }$. 
We have to solve the systems 
of equations $p_i^{({r })} (z)=0$ for $i =1,\ldots,r(r-1)/2$. 
Let $\g'$ be as in the proof of Lemma \ref{lem:B-gen}. 
If $\lam_1=0$ then by induction applied to $\g'$, we get the statement. 
Otherwise, using Lemma \ref{lem:B-gen} and the 
equation $0=p_j^{({r })}(\lam)=\lam_1 \lam_{j+1}$, for $j=1,\ldots,{r }-2$, 
we deduce that $\lam_3=\cdots=\lam_{r }=0$. 
Therefore, from 
$$0=p_{r(r-1)/2}^{({r })}(\lam)=\lam_2(\lam_1+\lam_2)$$ 
we deduce that 
either $\lam_2=0$ or $\lam_1=-\lam_2$, 
whence the statement. 

The last assertion is straightforward using the description of the 
fundamental weights $\varpi_1,\ldots,\varpi_r$.  
\end{proof}

Denote by $\l$ the standard Levi subalgebra generated by 
$\alpha_2,\ldots,\alpha_{r }$. 
Then $\l \simeq \C\times \mf{so}_{2r-2}$ and its center 
is generated by the semisimple element $\varpi_1^\vee$. 
Let $$\SS_{short}:=\SS_{\l}$$
be the 
Dixmier sheet associated with $\l$ (cf.\ \cite{AM16}).
It is the
 the unique sheet containing the nilpotent orbit 
$\O_{short}$ of $\so_{2r+1}$. 
In fact, $$\O_{short} = {\rm Ind}_{\l}^\g(0),$$
where $ {\rm Ind}_{\l}^\g(0)$ is the induced nilpotent orbit 
of $\fing$ from $0$ in $\l$, 
and $\O_{short}$ cannot be induced in another way, see e.g.~\cite[Appendix]{MorYu16}. 
We have (cf.~e.g. \cite{AM16})
$$\overline{\SS_{short}}=\overline{G.(\C^*\kappa^\sharp(\varpi_1))}.$$

%

\begin{Pro}\label{Pro:sheet}
Let $\fing$, $k$ be as in 
Theorem \ref{Th:short}.
Then $X_{V_k(\fing)}\subset \overline{\SS_{short}}$.
\end{Pro}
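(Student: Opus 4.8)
The plan is to combine the two zero-locus lemmas (Lemmas \ref{lem:B-zero} and \ref{lem2:B-zero}) with the conic $G$-structure of the associated variety. Write $X:=X_{V_k(\fing)}$. First I would record the inclusion $X\subseteq V(W_2)$: since $k=-2$, the vector $\sigma(w_2)$ is singular in $V^k(\fing)$ by \cite[Theorem 4.2]{AM15}, so it lies in the maximal proper submodule and maps to $0$ in the simple quotient $V_k(\fing)$. Hence its symbol $w_2\in S^2(\g)$ maps to $0$ in $R_{V_k(\fing)}$, a quotient of $R_{V^k(\fing)}=S(\g)$; as the kernel ideal is $G$-stable, the whole submodule $W_2=U(\g).w_2$ maps to $0$ there, and therefore $X\subseteq V(W_2)$. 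Recall also that $X$ is a $G$-invariant, conic, Poisson subvariety of $\g^*\cong\g$.

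Next I would analyse semisimple parts. For $x\in X$ with Jordan decomposition $x=x_s+x_n$, pick a cocharacter $\gamma\colon\C^*\to G^{x_s}$ with $\lim_{t\to 0}\Ad(\gamma(t))x_n=0$ (the associated cocharacter of an $\mf{sl}_2$-triple for $x_n$ in $\g^{x_s}$). Since $\gamma(t)$ fixes $x_s$, we get $\lim_{t\to 0}\Ad(\gamma(t))x=x_s$, so $x_s\in X$ because $X$ is closed and $G$-invariant. Being semisimple and lying in $V(W_2)$, Lemma \ref{lem:B-zero} forces $x_s\in G.\C\varepsilon_1$; under the identification $\g^*\cong\g$ afforded by $\kappa^\sharp$, and using $\varpi_1=\varepsilon_1$ together with $\kappa^\sharp(\varpi_1)=\varpi_1^\vee$, this reads $x_s\in G.\C\varpi_1^\vee$. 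Thus every semisimple element of $X$ is a semisimple element of $\overline{\SS_{short}}=\overline{G.(\C^*\varpi_1^\vee)}$.

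Since $G$ and $\C^*$ are connected, each irreducible component $X_0$ of $X$ is itself $G$-invariant and conic, and I would treat them separately. If $X_0\subseteq\mc N$, then $X_0\subseteq V(W_2)\cap\mc N\subseteq\overline{\O_{(3,1^{2r-2})}}=\overline{\O_{short}}$ by Lemma \ref{lem2:B-zero}, and $\overline{\O_{short}}\subseteq\overline{\SS_{short}}$ because $\O_{short}={\rm Ind}_\l^\g(0)$ is exactly the nilpotent part of the Dixmier sheet $\SS_{short}$. If instead $X_0\not\subseteq\mc N$, then $X_0$ contains a nonzero semisimple element, hence an orbit $G.(z\varpi_1^\vee)$ with $z\neq 0$; conicity then yields $G.(\C^*\varpi_1^\vee)=\SS_{short}\subseteq X_0$, so $\overline{\SS_{short}}\subseteq X_0$.

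The hard part will be the reverse inclusion $X_0\subseteq\overline{\SS_{short}}$ for those components $X_0$ meeting the semisimple locus, equivalently ruling out \emph{mixed} points $z\varpi_1^\vee+n$ with $z\neq 0$ and $0\neq n\in\mc N(\l)$. Such points have semisimple part in $G.\C\varpi_1^\vee$ yet do not lie in $\overline{\SS_{short}}$: the closed semisimple orbits $G.(z\varpi_1^\vee)$ form a flat family of constant dimension, so any limit with $z\to z_0\neq 0$ remains semisimple. Hence Lemmas \ref{lem:B-zero} and \ref{lem2:B-zero} do not suffice on their own, and the Poisson condition (which here only records $G$-invariance) does not help either. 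To close this gap I would pass to the transverse slice $z\varpi_1^\vee+\g^{z\varpi_1^\vee}=z\varpi_1^\vee+\l$ at a generic semisimple point, where $\l\simeq\C\times\mf{so}_{2r-2}$, and restrict the generators $W_2$ to this slice; the expectation is that $V(W_2)$ meets $z\varpi_1^\vee+\mc N(\l)$ only in $z\varpi_1^\vee$, so that the fiber $X_0\cap\chi^{-1}(z\varpi_1^\vee)$ of the adjoint quotient $\chi$ equals $G.(z\varpi_1^\vee)$, forcing $\dim X_0=\dim\overline{\SS_{short}}$ and hence $X_0=\overline{\SS_{short}}$. Equivalently, since $\overline{\SS_{short}}\subseteq X_0$ is already known, it is enough to establish the dimension bound $\dim X_0\leq\dim\overline{\SS_{short}}=1+\dim G/L$; proving this bound through the slice reduction to $\mf{so}_{2r-2}$, in the spirit of the restriction arguments of \cite{AM16}, is the crux of the proof.
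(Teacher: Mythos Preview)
Your reduction $X_{V_k(\fing)}\subseteq V(W_2)$ and your analysis of the semisimple parts are correct and agree with the paper's setup. You have also correctly located the genuine obstacle: Lemmas~\ref{lem:B-zero} and~\ref{lem2:B-zero} control only the semisimple and the nilpotent loci of $V(W_2)$, and this does not by itself exclude ``mixed'' elements $z\varpi_1^\vee+n$ with $0\neq n\in\mc N(\l)$. Your proof stops precisely there; the slice computation you outline is only an expectation, and the dimension bound $\dim X_0\leq 1+\dim G/L$ is asserted as ``the crux'' but not established. As written, the argument is incomplete.

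The paper closes this gap differently, and without any slice calculation. It passes to the intermediate quotient $\tilde V_k(\fing):=V^k(\fing)/\langle\sigma(w_2)\rangle$, for which $X_{\tilde V_k(\fing)}=V(W_2)$, and then invokes \cite[Lemma~2.1\,(2)]{AM16} together with Lemmas~\ref{lem2:B-zero} and~\ref{lem:B-zero} to conclude the \emph{equality} $V(W_2)=\overline{\SS_{short}}$; since $V_k(\fing)$ is a quotient of $\tilde V_k(\fing)$, the inclusion $X_{V_k(\fing)}\subset\overline{\SS_{short}}$ follows immediately. The cited lemma from \cite{AM16} is exactly the structural input you are missing: it is a general result about $G$-stable closed cones and Dixmier sheet closures which, from the knowledge of the semisimple locus ($G.\C\varepsilon_1$) and the nilpotent locus ($\subset\overline{\O_{short}}$), yields the containment you want. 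So rather than attempting an ad hoc restriction of $W_2$ to $z\varpi_1^\vee+\l$, you should appeal directly to \cite[Lemma~2.1\,(2)]{AM16}; your component-by-component case split and the fibre-dimension heuristic then become unnecessary.
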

\begin{proof}
Let $\tilde{V}_k(\fing)$ be the quotient of $V^k(\fing)$ 
by the submodule generated by
$\sigma(w_2)$.
Then $X_{\tilde{V}_k(\fing)}$ is the 
zero locus of the $\ad \fing$-submodule of $S(\fing)$ 
generated by the image of $w_2$ in $R_{V_k(\fing)}=S(\fing)$, 
that is, $V(W_2)$. 
By Lemma \ref{lem2:B-zero}, Lemma \ref{lem:B-zero}
and \cite[Lemma 2.1 (2)]{AM16}, 
we obtain that $X_{\tilde{V}_k(\fing)}= \overline{\SS_{short}}$. 
This completes the proof
as $V_k(\fing)$ is a quotient of $\tilde{V}_k(\fing)$.
\end{proof}

\begin{Pro}\label{Pro:nonzero-and-simple}
Let $\fing$, $k$ be as in 
Theorem \ref{Th:short}.
Then 
$H_{f_{\theta_s}}^0(V_{k}(\fing))$ is nonzero and simple.
In particular,  $$X_{V_{k}(\fing)}\supset \overline{\O_{short}}.$$
\end{Pro}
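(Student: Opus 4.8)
The plan is to prove the nonvanishing and the simplicity separately; the inclusion is then automatic, since $f_{\theta_s}\in\O_{short}$ gives $\overline{G.f_{\theta_s}}=\overline{\O_{short}}$, so by Theorem~\ref{Th:W-algebra-variety}~(2)(a) the condition $H^0_{f_{\theta_s}}(V_k(\fing))\ne 0$ is exactly $\overline{\O_{short}}\subset X_{V_k(\fing)}$. For the nonvanishing I would follow the strategy of the proof of Theorem~\ref{Th:AMconj}. Put $D_h=D+\frac{1}{2}h_{\theta_s}=D+\varpi_1^\vee$; since $\alpha(D_h)\geq 0$ for every positive real root $\alpha$ of $\affg$, the operator $D_h$ endows $V_k(\fing)$ with a nonnegative grading $V_k(\fing)=\bigoplus_d V_k(\fing)_{[d]}$ that commutes with the Drinfeld-Sokolov differential, whence $H^0_{f_{\theta_s}}(V_k(\fing))=\bigoplus_d H^0_{f_{\theta_s}}(V_k(\fing))_{[d]}$. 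It then suffices to show that the bottom piece $d=0$ is nonzero.

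I would analyze this bottom piece through the degree-zero subalgebra. Let $\mf{a}\subset\affg$ be the subalgebra of $D_h$-degree $0$, generated by $\fing_{-1}\otimes t$, $\fing_0$ and $\fing_1\otimes t^{-1}$; because the grading is even here one has $\mf{a}\cong\fing_{-1}\+\fing_0\+\fing_1=\fing$, and $\mf{a}$ preserves each $V_k(\fing)_{[d]}$. Let $\mf{p}=\mf{l}\+\mf{m}$ be the parabolic subalgebra of $\mf{a}$ with Levi $\mf{l}\cong\fing_0$ and nilradical $\mf{m}=\fing_{-1}\otimes t$, and let $\mf{m}_-:=\fing_1\otimes t^{-1}$ be the opposite nilradical; from the definition of the reduction, $H^0_{f_{\theta_s}}(V_k(\fing))_{[0]}\cong H_0(\mf{m}_-,V_k(\fing)_{[0]}\otimes\C_\chi)$, where $\C_\chi$ is the character $x\mapsto(x\mid f_{\theta_s}\otimes t)$ of $\mf{m}_-$. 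The key is to show $V_k(\fing)_{[0]}$ is free over $U(\mf{m}_-)$: I would identify $V_k(\fing)_{[0]}$ with the generalized Verma module $U(\mf{a})\otimes_{U(\mf{p})}\C|0\ket$, verifying that the relevant highest weight (read off from $k=-2$ and the central shift) satisfies Jantzen's simplicity criterion exactly as in the proof of Theorem~\ref{Th:AMconj}. Since $[\fing_1,\fing_1]\subset\fing_2=0$, the algebra $\mf{m}_-$ is abelian, so this generalized Verma module is isomorphic to $S(\mf{m}_-)$, free of rank one over $U(\mf{m}_-)$; hence $H_0(\mf{m}_-,V_k(\fing)_{[0]}\otimes\C_\chi)\cong\C$ and $H^0_{f_{\theta_s}}(V_k(\fing))\ne 0$. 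In particular $H^0_{f_{\theta_s}}(V_k(\fing))_{[0]}\cong\C$.

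For the simplicity I would combine the exactness of $H^0_{f_{\theta_s}}$ on $\on{KL}_k$ (Theorem~\ref{Th:W-algebra-variety}~(1)) with the property that the Drinfeld-Sokolov reduction sends a simple object of $\on{KL}_k$ to a simple object or to $0$ (a known property of the reduction functor, cf.\ \cite{Ara09b}). Writing $V_k(\fing)=V^k(\fing)/N$ with $N$ the maximal submodule, exactness gives $H^0_{f_{\theta_s}}(V_k(\fing))=\W^k(\fing,f_{\theta_s})/H^0_{f_{\theta_s}}(N)$, and the cited property forces this nonzero quotient to be simple, its vacuum line being the one-dimensional space $H^0_{f_{\theta_s}}(V_k(\fing))_{[0]}$ found above.

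I expect the main obstacle to be the freeness of $V_k(\fing)_{[0]}$ over $U(\mf{m}_-)$, that is, the verification of Jantzen's simplicity criterion for the degree-zero highest weight (equivalently, that the maximal submodule of $V^k(\fing)$ avoids $D_h$-degree $0$); once this is in hand, the abelianness of $\mf{m}_-$ makes the homology computation immediate and the feature $\mf{a}\cong\fing$ keeps the bookkeeping transparent. A second, more structural, ingredient is the simplicity-preservation of the reduction functor on $\on{KL}_k$, which I would cite rather than reprove.
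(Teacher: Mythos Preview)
Your nonvanishing argument is fine and is the same as the paper's: set up the $D_h$-grading, identify the degree-zero piece with the parabolic Verma module $U(\mf a)\otimes_{U(\mf p)}\C|0\rangle$ via Jantzen's criterion, and compute the $\mf m_-$-coinvariants. The paper packages this slightly differently (it verifies $U(\mf a)|0\rangle\cong U(\mf a)\otimes_{U(\mf p)}\C|0\rangle$ and then immediately invokes \cite[Theorem~6.3]{AM16}), but the content is identical.

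The gap is in your simplicity argument. You appeal to the assertion that $H^0_{DS,f}$ sends a simple object of $\on{KL}_k$ to a simple object or zero, and you attribute this to \cite{Ara09b}. That reference proves exactness and the associated-variety formula (your Theorem~\ref{Th:W-algebra-variety}), but it does \emph{not} prove simplicity preservation for an arbitrary nilpotent $f$. Indeed, the paper itself states in \S2.3 that the isomorphism $\W_k(\fing,f)\cong H^0_{DS,f}(V_k(\fing))$ (equivalently, the simplicity of the latter when nonzero) is a conjecture of Kac--Roan--Wakimoto and Kac--Wakimoto, verified only in special cases \cite{Ara05,Ara07,Ara08-a}; none of those covers a short nilpotent in type $B$ at level $-2$. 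Your one-dimensional bottom piece $H^0_{f_{\theta_s}}(V_k(\fing))_{[0]}\cong\C$ is suggestive but does not by itself force simplicity.

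The paper closes this gap differently: the same Jantzen input you already have (namely that the $\mf a$-submodule generated by $|0\rangle$ is the parabolic Verma module) is exactly the hypothesis of \cite[Theorem~6.3]{AM16}, which then yields directly that $H^0_{f_{\theta_s}}(V_k(\fing))$ is nonzero and \emph{almost irreducible}. Since $H^0_{f_{\theta_s}}(V_k(\fing))$ is a quotient of $\W^k(\fing,f_{\theta_s})$ and hence cyclic, almost irreducibility is the same as simplicity. So you should replace your second paragraph's black-box citation by this application of \cite[Theorem~6.3]{AM16}; the computation you did for nonvanishing is precisely what is needed to feed into it.
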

\begin{proof}
Consider the subalgebra $\mf{a}$ of $\affg$ generated by
$x_{\alpha}\otimes t$, $\alpha\in \Delta_{-1}$,
$x_{\beta}$, $\beta\in \Delta_{0}$,
$x_{\gamma}\otimes t^{-1}$, $\gamma\in \Delta_{1}$,
which is isomorphic to $\fing$.
The vacuums vector $|0\ket$ generates the irreducible  representation of 
$\mf{a}=\g$ with highest weight
$-2\varpi_1$.
Then as in the proof of Theorem \ref{Th:AMconj}, 
we use Jantzen's simplicity criterion \cite{Jan77} (see also \cite[Section~9.13]{Humphreys}) 
to find that
$
U(\mf{a})|0\ket \cong U(\mf{a})\*_{U(\mf{p})}\C |0\ket,
$
where $\mf{p}$ is the parabolic subalgebra of $\mf{a}$ generated by 
$x_{\alpha}\otimes t$, $\alpha\in \Delta_{-1}$,
$x_{\beta}$, $\beta\in \Delta_{0}$.
It follows from
 \cite[Theorem 6.3]{AM16} that
$H_{f_{\theta_s}}^0(V_{k}(\fing))$ is nonzero and almost irreducible.
As $H_{f_{\theta_s}}^0(V_{k}(\fing))$ is cyclic,  the almost irreduciblity is the same as the simplicity.
Finally, the last assertion follows from Theorem \ref{Th:W-algebra-variety} (2).
\end{proof}

\begin{Lem}\label{Lem:or}
Let $Y$ be a $G$-invariant subvariety of $ \overline{\SS_{short}}$
containing $\overline{\O_{short}}$.
Then $Y= \overline{\SS_{short}}$ or $Y=\overline{\O_{short}}$.
\end{Lem}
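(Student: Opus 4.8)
The plan is to read off everything from the explicit description $\overline{\SS_{short}}=\overline{G.(\C^*\varpi_1^\vee)}$ and from the fact that the center $\z(\l)=\C\varpi_1^\vee$ of $\l$ is one-dimensional. First I would record two geometric facts. Being the closure of the image of the irreducible variety $G\times\C^*$ under $(g,t)\mapsto g.(t\varpi_1^\vee)$, the variety $\overline{\SS_{short}}$ is irreducible. Moreover $\g^{\varpi_1^\vee}=\l$, so each semisimple orbit $G.(t\varpi_1^\vee)$ with $t\neq 0$ has dimension $\dim\g-\dim\l$, which is exactly $\dim\O_{short}$ since $\O_{short}=\on{Ind}_\l^\g(0)$. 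Hence
$$\dim\overline{\SS_{short}}=\dim\g-\dim\l+1=\dim\O_{short}+1,$$
so that $\overline{\O_{short}}$ is a $G$-stable irreducible hypersurface in $\overline{\SS_{short}}$, and its complement
$$\overline{\SS_{short}}\setminus\overline{\O_{short}}=\bigcup_{t\in\C^*}G.(t\varpi_1^\vee)$$
is the dense open locus of semisimple elements, swept out by the single one-parameter family $t\mapsto t\varpi_1^\vee$.

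If $Y=\overline{\O_{short}}$ there is nothing to prove, so I would assume $Y\supsetneq\overline{\O_{short}}$ and aim to show $Y=\overline{\SS_{short}}$. By the decomposition above, $Y$ then meets the semisimple locus, so after translating by $G$ it contains some $t_0\varpi_1^\vee$ with $t_0\neq 0$. The decisive step is to propagate this single orbit to the whole family, and for this I would invoke the contracting $\C^*$-action under which $\overline{\SS_{short}}$ is conic, namely the scaling $s.x=sx$ on $\g$: it preserves $\overline{\SS_{short}}$ and sends $t_0\varpi_1^\vee$ to $(st_0)\varpi_1^\vee$. Since $Y$ is conic and $G$-invariant, it therefore contains $G.(t\varpi_1^\vee)$ for every $t\in\C^*$; being closed, $Y$ contains the closure of their union, which is all of $\overline{\SS_{short}}$, whence $Y=\overline{\SS_{short}}$.

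The hard part is exactly this propagation: a priori $Y$ might contain a single closed semisimple orbit $G.(t_0\varpi_1^\vee)$ and nothing further, so that $\overline{\O_{short}}\cup G.(t_0\varpi_1^\vee)$ would be a $G$-stable closed subvariety lying strictly between the two extremes. What excludes this is the conic (scaling-invariant) structure carried by the associated variety to which the lemma is applied, together with the one-dimensionality of $\z(\l)$, which guarantees that rescaling one semisimple orbit already sweeps out the entire semisimple locus. I note that if instead one only knows $Y$ to be irreducible, the dimension count settles the matter at once: from
$$\dim\overline{\O_{short}}\leq\dim Y\leq\dim\overline{\SS_{short}}=\dim\overline{\O_{short}}+1$$
and the irreducibility of both $Y$ and $\overline{\SS_{short}}$, one gets $Y=\overline{\O_{short}}$ or $Y=\overline{\SS_{short}}$.
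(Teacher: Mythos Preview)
Your approach is the same as the paper's: both rest on the decomposition $\overline{\SS_{short}}=G.\C^*\varpi_1^\vee\cup\overline{\O_{short}}$, which the paper simply cites as \cite[Lemma~2.1]{AM16} and you re-derive. You are, however, more careful than the paper on one point, and rightly so: the lemma as literally stated is not true without an extra hypothesis on $Y$. For any fixed $t_0\in\C^*$ the set $\overline{\O_{short}}\cup G.(t_0\varpi_1^\vee)$ is closed (a semisimple orbit is closed), $G$-invariant, and lies strictly between the two extremes. Your fix---requiring $Y$ to be conic, which $X_{V_k(\fing)}$ certainly is---is exactly what is needed and is what the paper tacitly uses in the proof of Theorem~\ref{Th:short}. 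Your alternative dimension argument under the assumption that $Y$ is irreducible is also correct, but less directly applicable here since the irreducibility of $X_{V_k(\fing)}$ is not known a priori.
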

\begin{proof}
By \cite[Lemma 2.1]{AM16},
$\overline{\SS_{short}}=G.\C^*\lam\cup \overline{\O_{short}}$.
\end{proof}

Let  
$\on{Vir}^c$ denote the universal Virasoro vertex algebra  at central charge $c\in \C$,
and $\on{Vir}_c$  the unique simple quotient of $\on{Vir}^c$.
We know that 
the following conditions are equivalent (\cite{BeiFeiMaz,Wan93}, see also \cite{Ara12,A2012Dec}):
\begin{enumerate}
\item  $\on{Vir}_c$ is rational,
\item  $\on{Vir}_c$ is lisse,
\item $c=c_{p,q}:=1-6(p-q)^2/pq$ with $p,q\in \Z_{\geq 2}$, $(p,q)=1$.
\end{enumerate}

\begin{Th}\label{Th:reduction-is-simple-virasoro}
Let $\fing$, $k$ be as in 
Theorem \ref{Th:short}.
We have
$H_{f_{\theta_s}}^0(V_{k}(\fing))\cong \on{Vir}_{c_{2,2r-3}} $.
Thus, $\W_{k}(\fing,f_{\theta_s})\cong  \on{Vir}_{c_{2,2r-3}}$.
In particular $\W_{k}(\fing,f_{\theta_s})$ is lisse
and rational.
\end{Th}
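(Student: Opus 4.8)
The plan is to show that $H^0:=H^0_{f_{\theta_s}}(V_k(\fing))$ is a simple, lisse vertex algebra whose central charge lies in the $(2,2r-3)$ minimal series, and then to exploit the special arithmetic feature of that series---that all of its primary weights are non-positive---to force $H^0$ to coincide with the vacuum module $\on{Vir}_{c_{2,2r-3}}$. The elegant point is that this avoids any case-by-case analysis of the many strong generators of $\W^k(\fing,f_{\theta_s})$.

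First I would assemble the basic invariants. By Proposition~\ref{Pro:nonzero-and-simple} the vertex algebra $H^0$ is nonzero and simple, and it is positively graded with $(H^0)_0=\C$; being the simple quotient of $\W^k(\fing,f_{\theta_s})$ it equals $\W_k(\fing,f_{\theta_s})$. Since $X_{V_k(\fing)}=\overline{\mathbb{O}_{short}}=\overline{G.f_{\theta_s}}$ by Theorem~\ref{Th:short}, part~(2)(b) of Theorem~\ref{Th:W-algebra-variety} shows that $H^0$ is lisse; equivalently $X_{H^0}=\Slo_{f_{\theta_s}}\cap\overline{\mathbb{O}_{short}}=\{f_{\theta_s}\}$ is a single point, so every graded piece $(H^0)_d$ is finite-dimensional. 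Next I would compute the central charge of $\W^k(\fing,f_{\theta_s})$ from the Kac--Roan--Wakimoto formula, substituting $h^\vee=2r-1$, $x=\tfrac12 h_{\theta_s}$ and the short grading $\fing=\fing_{-1}\oplus\fing_0\oplus\fing_1$, and check that at $k=-2$ it equals $c_{2,2r-3}$. This is the value $c_{p,q}$ with $p=2$, $q=2r-3$ coprime and $\geq 2$ for $r\geq 3$ (for instance $c=0$ when $r=3$, consistent with $\on{Vir}_0=\C$); by the equivalences recalled before the theorem, the conformal vector $\omega\in(H^0)_2$ therefore generates a copy of the simple, rational, lisse Virasoro vertex algebra $\on{Vir}_{c_{2,2r-3}}$ inside $H^0$ (the minimal-model null vector holding in $H^0$ by lisseness).

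The decisive step is then purely representation-theoretic. Because $\on{Vir}_{c_{2,2r-3}}$ is rational and $H^0$ is an ordinary module over it (bounded-below integer grading, finite-dimensional graded pieces, common conformal vector), $H^0$ decomposes as a direct sum of simple highest-weight modules $L(c,h_{1,s})$, whose lowest weights $h_{1,s}$ lie in the Kac table of the $(2,2r-3)$ model. The key arithmetic observation is that for $p=2$ one has $h_{1,s}=\dfrac{(q-2s)^2-(q-2)^2}{8q}\leq 0$ for all $1\leq s\leq q-1$, with equality only at the two endpoints, which produce the vacuum weight $h=0$. Since $H^0$ is non-negatively graded with $(H^0)_0=\C$ one-dimensional, no summand with strictly negative lowest weight can occur and only a single copy of the vacuum module survives. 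Hence $H^0=L(c,0)=\on{Vir}_{c_{2,2r-3}}$, and consequently $\W_k(\fing,f_{\theta_s})\cong\on{Vir}_{c_{2,2r-3}}$, which is lisse and rational.

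I expect the main obstacle to be the first half of this program rather than the last: carrying out the Kac--Roan--Wakimoto central charge computation cleanly and confirming that it returns exactly $c_{2,2r-3}$, together with the verification that $H^0$ is a genuinely completely reducible $\on{Vir}_{c_{2,2r-3}}$-module (equivalently, that the minimal-model null vector acts by zero on $H^0$, which rests on lisseness of $H^0$ and the rationality of the minimal model). Granting these two inputs, the non-positivity of the $(2,q)$ weights makes the collapse to the vacuum module automatic, and the final assertion of the theorem is immediate from the listed equivalence $(1)\Leftrightarrow(2)\Leftrightarrow(3)$.
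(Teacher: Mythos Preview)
Your argument is circular. You invoke Theorem~\ref{Th:short} to conclude that $X_{V_k(\fing)}=\overline{\O_{short}}$ and hence that $H^0$ is lisse, but in the paper the proof of Theorem~\ref{Th:short} comes \emph{after} Theorem~\ref{Th:reduction-is-simple-virasoro} and uses it in an essential way: the final step there rules out $X_{V_k(\fing)}=\overline{\SS_{short}}$ precisely because it would contradict the lisseness of $\W_k(\fing,f_{\theta_s})$ established in Theorem~\ref{Th:reduction-is-simple-virasoro}. At the moment one tries to prove the present theorem, all that is available is $\overline{\O_{short}}\subset X_{V_k(\fing)}\subset\overline{\SS_{short}}$ (Propositions~\ref{Pro:sheet} and~\ref{Pro:nonzero-and-simple}), so $X_{H^0}$ could a priori still be the one-dimensional line $f_{\theta_s}+\C e_{\theta_s}$, and your appeal to lisseness---on which the complete reducibility over $\on{Vir}_{c_{2,2r-3}}$ and hence the whole Kac-table argument rests---is unjustified.

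The paper avoids this by working one step higher: it takes the intermediate quotient $\tilde V_k(\fing)$, whose associated variety is exactly $\overline{\SS_{short}}$, so that $X_{H^0_{DS,f_{\theta_s}}(\tilde V_k(\fing))}=f_{\theta_s}+\C e_{\theta_s}$. Then $R_{H^0_{DS,f_{\theta_s}}(\tilde V_k(\fing))}$ is the polynomial ring in the image of the conformal vector $L$, which forces $H^0_{DS,f_{\theta_s}}(\tilde V_k(\fing))$ to be strongly generated by $L$. By exactness of the reduction functor, the further quotient $H^0_{DS,f_{\theta_s}}(V_k(\fing))$ is also strongly generated by $L$, and being simple (Proposition~\ref{Pro:nonzero-and-simple}) with the correct central charge it must equal $\on{Vir}_{c_{2,2r-3}}$. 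This gives lisseness as an output rather than assuming it as an input. Your observation about the non-positivity of the $(2,q)$ Kac-table weights is correct and pleasant, but it only becomes usable once lisseness is established by an independent route.
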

\begin{proof}
First it is straightforward to check that
$H_{f_{\theta_s}}^0(V_{k}(\fing))$ and $ \on{Vir}_{c_{2,2r-3}} $
has the same central charge.

Let $\tilde{V}_k(\fing)$  be as in the proof of Proposition \ref{Pro:sheet}.
Then 
$X_{H_{DS,f_{\theta_s}}^0(\tilde{V}_k(\fing))}= \overline{\SS_{short}}\cap \Slo_{f_{\theta_s}} $
by Theorem \ref{Th:W-algebra-variety}. 
Since $f_{\theta_s}+e_{\theta_s}$ is a semisimple element that is conjugate to 
$h_{\theta_s}$,
we have (see \cite[Remark 3.5]{AM16}), 
$$X_{H_{DS,f_{\theta_s}}^0(\tilde{V}_k(\fing))}=f_{\theta_s}+ \C e_{\theta_s}.$$
This implies that 
$R_{H_{DS,f_{\theta_s}}^0(\tilde{V}_k(\fing))}$ is the  polynomial ring 
generated by the image of the conformal vector $L$ of 
$H_{DS,f_{\theta_s}}^0(\tilde{V}_k(\fing))$.
Hence,
$H_{DS,f_{\theta_s}}^0(\tilde{V}_k(\fing))$ is strongly generated by $L$.
As $V_k(\fing)$
is a quotient of $\tilde{V}_k(\fing)$,
the exactness of  $H_{DS,f_{\theta_s}}^0(?)$
(Theorem~\ref{Th:W-algebra-variety} (1)) implies that
$H_{DS,f_{\theta_s}}^0({V}_k(\fing))$  is also strongly generated 
by its conformal vector.
Since it is simple 
by Proposition \ref{Pro:nonzero-and-simple}, 
$H_{DS,f_{\theta_s}}^0({V}_k(\fing))$  must be isomorphic to 
$\on{Vir}_{c_{2,2r-3}}$.
\end{proof}

\begin{proof}[Proof of Theorem \ref{Th:short}]
By Proposition \ref{Pro:sheet} and Proposition \ref{Pro:nonzero-and-simple},
$X_{V_k(\fing)}$ is
a $G$-invariant subvariety of $ \overline{\SS_{short}}$
containing $\overline{\O_{short}}$.
Thus
 $X_{V_k(\fing)}= \overline{\SS_{short}}$ or $X_{V_k(\fing)}=\overline{\O_{short}}$ 
by Lemma \ref{Lem:or}.
But if $X_{V_k(\fing)}= \overline{\SS_{short}}$ then
$X_{H_{DS,f_{\theta_s}}^0(\tilde{V}_k(\fing))}= \overline{\SS_{short}}\cap  \Slo_{f_{\theta_s}} $
is one-dimensional and $H_{DS,f_{\theta_s}}(V_k(\fing))$ cannot be lisse,
which contradicts Theorem~\ref{Th:reduction-is-simple-virasoro}, 
whence $X_{V_k(\fing)}=\overline{\O_{short}}$.
\end{proof}

\begin{Conj}\label{Conj:typeB-lisse}
Let $\fing=\mf{so}_{2r+1}$, $r \geq 3$.
Then $\W_k(\fing,f_{\theta_s})$ is lisse for any integer $k$ such that $k\geq -2$.
\end{Conj}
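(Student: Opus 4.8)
Reading $\W_k(\fing,f_{\theta_s})$ as the reduction $H^0_{DS,f_{\theta_s}}(V_k(\fing))$ (consistent with Theorem~\ref{Th:reduction-is-simple-virasoro}), the plan is to control $X_{V_k(\fing)}$ and pass through Theorem~\ref{Th:W-algebra-variety}(2). Since $\O_{short}=G.f_{\theta_s}$, the nilpotent Slodowy slice $\overline{\O_{short}}\cap\Slo_{f_{\theta_s}}=\Slo_{\O_{short},f_{\theta_s}}$ is the single point $\{f_{\theta_s}\}$; hence the containment $X_{V_k(\fing)}\subseteq\overline{\O_{short}}$ would force
$$X_{H^0_{DS,f_{\theta_s}}(V_k(\fing))}=X_{V_k(\fing)}\cap\Slo_{f_{\theta_s}}\subseteq\{f_{\theta_s}\},$$
which is at most a point, so the reduction is lisse. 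I would therefore aim to prove $X_{V_k(\fing)}\subseteq\overline{\O_{short}}$ for every integer $k\geq-2$.

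The extreme ranges are settled immediately. For $k=-2$ this is Theorem~\ref{Th:short}, with lisseness supplied by Theorem~\ref{Th:reduction-is-simple-virasoro}. For every integer $k\geq0$ the simple affine vertex algebra $V_k(\fing)$ is integrable, so $X_{V_k(\fing)}=\{0\}\subseteq\overline{\O_{short}}$; as $\overline{\O_{short}}\not\subseteq\{0\}$, Theorem~\ref{Th:W-algebra-variety}(2a) even gives $H^0_{DS,f_{\theta_s}}(V_k(\fing))=0$. The only remaining level is $k=-1$, which is not admissible for $B_r$ and so is absent from Table~\ref{Tab:known-cases}.

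For $k=-1$ I would imitate Section~\ref{sec:type-B-short}. Let $\tilde V_{-1}$ be the quotient of $V^{-1}(\fing)$ by the submodule generated by suitable singular vectors; the first goal is to show, as in Proposition~\ref{Pro:sheet}, that $X_{\tilde V_{-1}}=\overline{\SS_{short}}$, whence $X_{V_{-1}(\fing)}\subseteq\overline{\SS_{short}}$. Because $\overline{\SS_{short}}\cap\Slo_{f_{\theta_s}}$ is one-dimensional, $X_{H^0_{DS,f_{\theta_s}}(\tilde V_{-1})}$ is a line, so $R_{H^0_{DS,f_{\theta_s}}(\tilde V_{-1})}$ is a polynomial ring in the image of the conformal vector and the reduction is strongly generated by its conformal vector $L$. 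A Jantzen-type simplicity argument for $V_{-1}(\fing)$, parallel to Proposition~\ref{Pro:nonzero-and-simple}, would then show that $H^0_{DS,f_{\theta_s}}(V_{-1}(\fing))$ is simple; being strongly generated by $L$, it is therefore the simple Virasoro vertex algebra $\on{Vir}_{c(-1)}$, where $c(-1)$ is the central charge of the reduction at $k=-1$. By the criterion recalled before Theorem~\ref{Th:reduction-is-simple-virasoro}, this is lisse exactly when $c(-1)=c_{p,q}$ for coprime $p,q\geq2$, and in that case $X_{V_{-1}(\fing)}=\overline{\O_{short}}$ follows from the dichotomy of Lemma~\ref{Lem:or}, the sheet alternative being excluded by lisseness.

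The genuine difficulty is producing the singular vectors at $k=-1$: the degree-two vector $\sigma(w_2)$ driving the $k=-2$ case is singular \emph{only} at $k=-2$ by \cite[Theorem~4.2]{AM15}, so a fresh family of relations is needed. I would search for singular vectors of $V^{-1}(\fing)$ in a different irreducible constituent of $S^2(\g)$, or in $S^3(\g)$, and then rerun the weight-space computation of Lemma~\ref{lem:B-gen} and the minimal-degeneration analysis of Lemmas~\ref{lem2:B-zero} and~\ref{lem:B-zero} to establish $X_{\tilde V_{-1}}=\overline{\SS_{short}}$. The two remaining inputs are lighter: the simplicity of $H^0_{DS,f_{\theta_s}}(V_{-1}(\fing))$ should follow from a Jantzen-criterion computation for the appropriate highest weight, as at $k=-2$, and the membership $c(-1)=c_{p,q}$ is a finite check against the Kac--Roan--Wakimoto central-charge formula~\cite{KacRoaWak03}. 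I expect the construction and analysis of these $k=-1$ singular vectors to be the crux of the whole conjecture.
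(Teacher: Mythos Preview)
The statement is labeled as a \emph{Conjecture} in the paper and is not proved there; the paper's only remark is that the case $k=-2$ follows from Theorem~\ref{Th:short} (together with Theorem~\ref{Th:reduction-is-simple-virasoro}). So there is no proof to compare against: your handling of $k=-2$ matches the paper, and the rest of your proposal is an attempt at an open problem.

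Your treatment of the range $k\geq 0$ contains a genuine gap. By definition $\W_k(\fing,f_{\theta_s})$ is the (nonzero) simple graded quotient of the universal $W$-algebra $\W^k(\fing,f_{\theta_s})$; the identification $\W_k(\fing,f)\cong H^0_{DS,f}(V_k(\fing))$ is the Kac--Roan--Wakimoto conjecture, and the paper states it explicitly with the proviso that $H^0_{DS,f}(V_k(\fing))\neq 0$. For integrable $k\geq 0$ you correctly observe $X_{V_k(\fing)}=\{0\}$ and hence $H^0_{DS,f_{\theta_s}}(V_k(\fing))=0$, but this is exactly the situation in which the identification does \emph{not} apply, so nothing follows about the associated variety of $\W_k(\fing,f_{\theta_s})$ itself. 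Thus the integrable levels are not ``settled immediately''; they are just as open as $k=-1$. The parallel Conjecture~\ref{Conj:lisse-typeD} in type $D$ is likewise stated for all integers $k\geq 2-r$ (which includes all $k\geq 0$) with only the boundary level established.

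For $k=-1$ your outline is plausible in spirit, but, as you yourself say, it hinges on finding and analysing new singular vectors at that level, since $\sigma(w_2)$ is singular only at $k=-2$. None of the machinery of Section~\ref{sec:type-B-short} is available there, which is precisely why the statement remains a conjecture.
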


Conjecture \ref{Conj:typeB-lisse} is true for $k=-2$ by Theorem \ref{Th:short}. 

\newcommand{\etalchar}[1]{$^{#1}$}



\begin{thebibliography}{BLL{\etalchar{+}}15}
\bibitem[ABD04]{AbeBuhDon04}
T.~Abe, G.~Buhl, and C.~Dong.
\newblock Rationality, regularity, and {$C_2$}-cofiniteness.
\newblock {\em Trans. Amer. Math. Soc.}, 356(8):3391--3402 (electronic), 2004.

\bibitem[A05]{Ara05}
T.~Arakawa.
\newblock Representation theory of superconformal algebras and the
  {K}ac-{R}oan-{W}akimoto conjecture.
\newblock {\em Duke Math. J.}, 130(3):435--478, 2005.

\bibitem[A07]{Ara07}
T.~Arakawa.
\newblock Representation theory of {$W$}-algebras.
\newblock {\em Invent. Math.}, 169(2):219--320, 2007

\bibitem[A11]{Ara08-a}
T.~Arakawa.
\newblock Representation theory of {$W$}-algebras, {II}.
\newblock 
 volume~61 of 
 {\em Adv. Stud. Pure Math.}, pages
  51--90. Math. Soc. Japan, Tokyo, 2011.

\bibitem[A12]{Ara12}T.~Arakawa.
\newblock A remark on the {$C_2$} cofiniteness condition on vertex algebras.
\newblock {\em Math. Z.}, 270(1-2):559--575, 2012.

\bibitem[A15a]{Ara09b} T.~Arakawa.
\newblock Associated varieties of modules over {K}ac-{M}oody algebras and
  {$C_2$}-cofiniteness of {$W$}-algebras.
	       \newblock {{\em Int. Math. Res. Not.}
	       2015(22): 11605-11666, 2015.}
	       
	       \bibitem[A15b]{A2012Dec}
T.~Arakawa.
\newblock Rationality of {W}-algebras: principal nilpotent cases.
\newblock {\em Ann. Math.}, 182(2):565--694, 2015.

\bibitem[BFM]{BeiFeiMaz}
A.~Beilinson, B.~Feigin, and B.~Mazur.
\newblock Introduction to algebraic field theory on curves.
\newblock {\em preprint}.

\bibitem[AM15]{AM15} T.~Arakawa and A.~Moreau. 
\newblock Joseph ideals and lisse minimal $W$-algebras. 
\newblock {\em J. Inst. Math. Jussieu}, published online,
\newblock arXiv:1506.00710[math.RT]. 

\bibitem[AM16]{AM16} T.~Arakawa and A.~Moreau. 
\newblock Sheets and associated varieties of affine vertex algebras. 
\newblock arXiv:1601.05906[math.RT]. 

\bibitem[BLL{\etalchar{+}}15]{BeeLemLie15}
C.~Beem, M.~Lemos, P.~Liendo, W.~Peelaers, 
L.~Rastelli, and B.~van Rees.
\newblock Infinite chiral symmetry in four dimensions.
\newblock {\em Comm. Math. Phys.}, 336(3):1359--1433, 2015.

\bibitem[BS84]{BeynonSpaltenstein84} 
 W. Beynon and N. Spaltenstein. 
\newblock Green functions of finite Chevalley groups of type E$_n$ ($n=6,7,8$). 
\newblock {\em J. Algebra} 88(2):584--614, 1984. 

\bibitem[B98a]{Broer98a} A.~Broer. 
\newblock Decomposition varieties in semisimple Lie algebras. 
\newblock {\em Canad. J. Math.}, 50(5):929--971, 1998.

\bibitem[B98b]{Broer98b} A.~Broer. 
\newblock Normal nilpotent varieties in $F_4$. 
\newblock {\em J. Algebra}, 207(2):427--448, 1998.

\bibitem[CM93]{CMa} D.~Collingwood and W.M.~McGovern. 
\newblock Nilpotent orbits in semisimple {L}ie algebras. 
\newblock Van Nostrand Reinhold Co. New York, 65, 1993. 

\bibitem[D96]{De96} P.~Deligne. 
\newblock La s\'{e}rie exceptionnelle de groupes de Lie. 
\newblock {\em C.~R.~Acad. Sci. Paris}, Ser I, 322(4), 321--326, 1996.

\bibitem[DSK06]{De-Kac06}
A.~De~Sole and V.~Kac.
\newblock Finite vs affine {$W$}-algebras.
\newblock {\em Japan. J. Math.}, 1(1):137--261, 2006.

\bibitem[FF90]{FF90} B.~Feigin and E.~Frenkel.
\newblock Quantization of the {D}rinfel\cprime d-{S}okolov reduction.
\newblock {\em Phys. Lett. B}, 246(1-2):75--81, 1990.

\bibitem[FJLS15]{FJLS15} B.~Fu, D.~Juteau, P.~Levy 
and E. Sommers. 
\newblock Generic singularities of nilpotent orbit closures. 
\newblock arXiv:1502.05770[math.RT], to appear in {\em Adv. Math.}. 

\bibitem[GG02]{GanGin02} W.L.~Gan and V.~Ginzburg.
\newblock Quantization of {S}lodowy slices.
\newblock {\em Int. Math. Res. Not.}, (5):243--255, 2002.


\bibitem[Ga82]{Gar}
D.~Garfinkle.
\newblock A new construction of the {J}oseph ideal.
\newblock PhD thesis, MIT, 1982


\bibitem[Gr61]{EGAIII} A.~Grothendieck. 
\newblock \'El\'ements de g\'eom\'etrie alg\'ebrique. III. 
\'{E}tude cohomologique des faisceaux coh\'erents. I. 
\newblock {\em Inst. Hautes \'{E}tudes Sci. Publ. Math.} 
11(1):5--167, 1961. 

\bibitem[Hen15]{Henderson} A.~Henderson. 
\newblock Singularities of nilpotent orbit closures.
\newblock {\em Rev. Roumaine Math. Pures Appl.} 60(4):441--469, 
2015.

\bibitem[Hes76]{Hesselink76} W.~Hesselink. 
\newblock Singularities in the nilpotent scheme of a classical group. 
\newblock {\em Trans. Amer. Math. Soc.} 222:1--32, 1976. 

\bibitem[Hes79]{Hesselink79} W.~Hesselink. 
\newblock The normality of closures of orbits in a Lie algebra. 
\newblock {\em Comment. Math. Helv.}, 54(1):105--110, 1979. 

\bibitem[Hu08]{Humphreys} J.~Humphreys. 
\newblock Representations of semisimple Lie algebras in the BGG category 
$\mc{O}$. 
\newblock {\em Graduate Studies in Mathematics}, 94. 
American Mathematical Society, Providence, RI, 2008.

\bibitem[J77]{Jan77}
J.C.~Jantzen.
\newblock Kontravariante {F}ormen auf induzierten {D}arstellungen halbeinfacher
  {L}ie-{A}lgebren.
\newblock {\em Math. Ann.}, 226(1):53--65, 1977.

%
\bibitem[KRW03]{KacRoaWak03}
V.~Kac, S.~Roan, and M.~Wakimoto.
\newblock Quantum reduction for affine superalgebras.
\newblock {\em Comm. Math. Phys.}, 241(2-3):307--342, 2003.


%
%
\bibitem[KW08]{KacWak08}
V.~Kac and M.~Wakimoto.
\newblock On rationality of {$W$}-algebras.
\newblock {\em Transform. Groups}, 13(3-4):671--713, 2008.

\bibitem[Ka06]{Kaledin06} D.~Kaledin. 
\newblock Symplectic singularities from the Poisson point of view. 
\newblock {\em J. Reine Angew. Math.}, 600:135--156, 2006.

\bibitem[Kr89]{Kraft89} H.~Kraft. 
\newblock Closures of conjugacy classes in $G_2$. 
\newblock {\em J. Algebra}, 126(2):454--465, 1989.
 
\bibitem[KP79]{KraftProcesi79} H.~Kraft and C.~Procesi.  
\newblock Closures of conjugacy classes of matrices are normal. 
\newblock {\em Invent. Math.} 53(3):227--247, 1979.

\bibitem[KP81]{KraftProcesi81} H.~Kraft and C.~Procesi. 
\newblock Minimal singularities in $GL_n$. 
\newblock {\em Invent. Math.} 62(3):503--515, 1981.
 
\bibitem[KP82]{KraftProcesi82} H.~Kraft and C.~Procesi. 
\newblock On the geometry of conjugacy classes in classical groups. 
\newblock {\em Comment. Math. Helv.} 57(4):539--602, 1982. 

\bibitem[LS88]{LevasseurSmith88} T.~Levasseur and S.P.~Smith.
\newblock Primitive ideals and nilpotent orbits in type $G_2$. 
\newblock {\em J. Algebra} 114(1):81--105, 1988.
 
\bibitem[Li05]{Li05} H.~Li.
\newblock Abelianizing vertex algebras.
\newblock {\em Comm. Math. Phys.}, 259(2):391--411, 2005.

\bibitem[Mi04]{Miy04}
M.~Miyamoto.
\newblock Modular invariance of vertex operator algebras satisfying {$C\sb
  2$}-cofiniteness.
\newblock {\em Duke Math. J.}, 122(1):51--91, 2004.

\bibitem[MY16]{MorYu16}
A.~Moreau and R.W.T.~Yu.
\newblock Jet schemes of the closure of nilpotent orbits.
\newblock {\em Pacific J. Math.}, 281(1):137--183, 2016.

\bibitem[N04]{Namikawa04} Y.~Namikawa, 
\newblock Birational geometry of symplectic resolutions of nilpotent orbits, 
Moduli spaces and arithmetic geometry, 75--116. 
\newblock {\em Adv. Stud. Pure Math.}, 45, Math. Soc. Japan, Tokyo, 2006
preprint http://arxiv.org/pdf/math/0408274v1.pdf.  

\bibitem[P91]{Panyushev} D.~Panyushev. 
\newblock Rationality of singularities and the Gorenstein property for nilpotent orbits. 
\newblock {\em Functional Analysis and Its Applications}, 25(3):225--226, 
1991. 

\bibitem[Sh80]{Shoji80} T.~Shoji. 
\newblock  On the Springer representations of Chevalley groups of type $F_4$. 
\newblock {\em Comm. Algebra} 8(5):409--440, 1980.

\bibitem[So03]{Sommers03} E.~Sommers. 
\newblock Normality of nilpotent varieties in $E_6$. 
\newblock {\em J. Algebra} 270(1):288--306, 2003.

\bibitem[So05]{Sommers05} E.~Sommers. 
\newblock Normality of very even nilpotent varieties in $D_{2l}$. 
\newblock {\em Bull. London Math. Soc.} 37(3):351--360, 2005.

\bibitem[W93]{Wan93}
W.~Wang.
\newblock Rationality of {V}irasoro vertex operator algebras.
\newblock {\em Internat. Math. Res. Notices}, (7):197--211, 1993.

\bibitem[Z96]{Zhu96}
Y.~Zhu.
\newblock Modular invariance of characters of vertex operator algebras.
\newblock {\em J. Amer. Math. Soc.}, 9(1):237--302, 1996.

\end{thebibliography}

\end{document}